\numberwithin{equation}{section}
\newcommand{\ve}{\mathbf{e}}
\newcommand{\vs}{\mathbf{s}}
\newtheorem*{thma}{Theorem A}
\newtheorem*{thmb}{Theorem B}
\newtheorem*{thmc}{Theorem C}
\newtheorem*{rema}{Remark}
\newtheorem{thm}{Theorem}[section]
\newtheorem{lem}[thm]{Lemma}
\newtheorem{rem}[thm]{Remark}
\newtheorem{prop}[thm]{Proposition}
\title{An Automorphism group of a rational surface: \break  Not too big not too small} 
\author{Kyounghee Kim}
\address{Department of Mathematics\\
         Florida State University\\
         Tallahassee, FL 32308}
\email{kim@math.fsu.edu}
\subjclass[2020]{14E07,14J26,37F99}
\keywords{an Automorphism Group of a Rational Surface, Coxeter Groups, Positive entropy}
\begin{document}

\maketitle

\begin{abstract}
This article concerns the realization problem of subgroups of Coxeter groups. 
We construct a subgroup $G$ of the Coxeter group $W_{15}$ such that $G$ is realized as automorphism groups of a rational surface $X$. The automorphism group $Aut(X)$ has six generators with positive entropy and $G \cong Aut(X)^* \cong D_3 \rtimes \mathbb{Z}$.  We also show that there is an element $\omega$ in $W_{14}$ is not realizable. 
\end{abstract}

\section{Introduction}\label{S:intro}
Let $X$ be a rational surface over $\mathbb{C}$. If $f:X \to X$ is an automorphism on $X$, due to Nagata \cite{Nagata2} one can identify the induced linear action $f^*:Pic(X) \to Pic(X)$ with an element of a Coxeter group $W_X$, which is generated by special reflections in the group of isometry on a lattice of signature $(1,n)$. With this identification, let us denote $Aut(X)^* \subset W_X$ be the group of induced actions $f^*: Pic(X) \to Pic(X) $ of $f \in Aut(X)$. 

We say an element $\omega$ of a Coxeter group is realized by a rational surface automorphism $f:X \to X$ if $\omega= f^*$ and also a subgroup $G$ of a Coxeter group is \textit{realized} by $Aut(X)$ if there is a rational surface $X$ such that $G=Aut(X)^* $.

\vspace{1ex}
It is natural to ask which element $\omega \in W_X$ can be realized by an automorphism on a rational surface. If $\omega$ is realized by a rational automorphism $f:X \to X$, then the cyclic subgroup $\langle \omega \rangle$ is realized by a group of automorphisms on a rational surface $X$. Thus, a more interesting question asked in \cite{Dolgachev-Ortland, McMullen:2007,Cantat:2018} must be identifying a subgroup of a Coxeter group realized by an automorphism group of a rational surface. A rational surface $X$ with an infinite $Aut(X)^*$ is quite special. To have such a rational surface, one needs to carefully pick a finite set of points $P= \{ p_1, \dots, p_n\}, n\ge 9$ such that $X$ is obtained by successive blowup of points in $P$. If $Aut(X)^*$ has a finite index in $W_X$, then the automorphism group $Aut(X)$ is \textit{large}, and we say the base locus $P$ of $X$ is \textit{Cremona Special}. Cantat and Dolgachev \cite{Cantat-Dolgachev} showed that there are only two possible Cremona Special sets in $\mathbf{P}^2$ over $\mathbb{C}$: either (1) $P$ is a Halphen set if $|P|=9$ or (2) $P$ is a Coble set if $|P|=10$.  

If $f \in Aut(X)$ for some rational surface $X$ with infinite order, then the cyclic group $\langle f^* \rangle = \{ (f^*)^n, n\in \mathbb{Z} \} \subset W_X$. Assuming that $f\in Aut(X)$ preserves an anticanonical cuspidal cubic, McMullen \cite{McMullen:2007} showed that the cyclic group $\langle f \rangle$ is a finite index subgroup of the full group $Aut(X)$. 

\vspace{1ex}

For a given element of the Coxeter group, finding a rational surface with an automorphism with infinite order is quite delicate. Several methods (mostly with an assumption of the existence of an invariant curve) to construct such rational surfaces have been studied in \cite{Bedford-Kim:2006, Bedford-Kim:2009, Blanc:2008, Diller:2011, McMullen:2007,Uehara:2010}. Yet there is more to be discovered. Due to the rareness of such rational surfaces, it is interesting to have an infinite non-cyclic automorphism group.

\vspace{1ex}
In this article, we present a rational surface whose automorphism group is equivalent to the direct product of $D_3$ and $\mathbb{Z}$. Furthermore, we show that the morphism from $Aut(X)$ to $Aut(X)^* \subset W_X$ is, in fact, an isomorphism. Another interesting example of a realizable non-cyclic subgroup of the Coxeter group was constructed by Blanc \cite{Blanc:2008} using involutions with a smooth cubic curve of fixed points. In Blanc's construction, the morphism from $Aut(X)$ to $W_X$ is not one-to-one.

\begin{thma}\label{thma} There is a rational surface $X$ such that its automorphism group $\text{Aut}(X)$ is generated by $6$ distinct quadratic automoprhisms with positive entropy and $\text{Aut}(X) \cong D_3 \rtimes \mathbb{Z}$.
\end{thma}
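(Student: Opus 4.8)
The plan is to realize $X$ explicitly as the blow-up of $\mathbb{P}^2$ at a carefully chosen $S_3$-invariant configuration of $15$ points, so that a prescribed family of quadratic Cremona maps lifts to genuine automorphisms. Concretely, I would begin by fixing the standard $S_3 \cong D_3$ action on $\mathbb{P}^2$ permuting the coordinates, and look for a quadratic birational map $f$ whose orbit data is compatible with this symmetry. Following the realization scheme of Bedford--Kim and Diller, a quadratic map with given orbit data lifts to an automorphism of the surface obtained by blowing up the (finitely many) orbit points precisely when the defining parameters satisfy the associated functional/determinantal equation; I would solve this equation inside the locus of $S_3$-symmetric configurations. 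The number $15$ of blown-up points, and hence the ambient Coxeter group $W_{15}$, is then dictated by the lengths of the three exceptional orbits.

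Positive entropy would follow from the standard cohomological computation: the induced map $f^*$ on $\mathrm{Pic}(X)$ is conjugate, via the geometric basis, to an explicit element of $W_{15}$, and its characteristic polynomial is determined combinatorially by the orbit data. I expect this polynomial to be a Salem polynomial whose largest root $\lambda > 1$ equals the dynamical degree, so that $\htop(f) = \log \lambda > 0$.

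For the group structure I would proceed in two halves. For the lower bound, the $S_3$-symmetry of the configuration descends to a copy of $D_3 \le \mathrm{Aut}(X)$ permuting the exceptional divisors, while $f$ generates an infinite cyclic subgroup; conjugating $f$ by the six elements of $D_3$ produces the six quadratic automorphisms of the statement, each of infinite order (hence of positive entropy) because each projects nontrivially onto the $\mathbb{Z}$-factor. Checking the relations $\sigma f \sigma^{-1} = f^{\pm 1}$, that is, computing the $D_3$-action on $\langle f \rangle$, then identifies the subgroup they generate with $D_3 \rtimes \mathbb{Z}$, and a short computation shows these six elements already generate it.

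The hard part, and the part that justifies the title, will be the upper bound: showing that these six maps exhaust $\mathrm{Aut}(X)$. Here I would exploit an $f$-invariant anticanonical curve, a cuspidal cubic in the spirit of McMullen, to constrain every automorphism. Any $g \in \mathrm{Aut}(X)$ preserves the canonical class and therefore the unique anticanonical curve, and its restriction to the smooth locus of that curve is multiplication by a determinant multiplier; this pins the image $\mathrm{Aut}(X)^* \subset W_{15}$ down to the marking-compatible centralizer of $f^*$, which I would compute to be exactly $D_3 \rtimes \mathbb{Z}$. Finally, the isomorphism $\mathrm{Aut}(X) \cong \mathrm{Aut}(X)^*$ reduces to injectivity of $g \mapsto g^*$: an automorphism acting trivially on $\mathrm{Pic}(X)$ would fix the marked cuspidal cubic together with enough exceptional curves to force $g = \mathrm{id}$, using that $X$ carries no nonzero holomorphic vector field. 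I anticipate the bulk of the work, and the most delicate estimates, to lie in ruling out extra automorphisms, namely in the centralizer computation inside $W_{15}$.
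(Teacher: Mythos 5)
Your overall scaffolding (blow up $15$ points on an invariant anticanonical curve, realize quadratic maps with prescribed orbit data via Diller's method, read off entropy from a Salem factor of the characteristic polynomial of $f^*\in W_{15}$, then prove a lower and an upper bound on $\mathrm{Aut}(X)$ and injectivity of $g\mapsto g^*$) matches the paper, but three of your key steps do not work as stated.

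First, the anticanonical curve cannot be a cuspidal cubic here: the $D_3$ factor in the paper arises precisely because the invariant cubic is \emph{reducible} --- three lines through a common point --- and automorphisms permute its three components. An irreducible cuspidal cubic offers no such symmetry, and McMullen's setting with a cuspidal cubic yields a virtually cyclic $\mathrm{Aut}(X)$, not one with a $D_3$ factor. Second, your six quadratic automorphisms cannot be the conjugates $\sigma f\sigma^{-1}$: under your own proposed relations $\sigma f\sigma^{-1}=f^{\pm1}$ there are at most two distinct conjugates, and the conjugates generate only the normal closure of $\langle f\rangle$, which is $\langle f\rangle\cong\mathbb{Z}$ --- they can never generate the $D_3$ factor. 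In the paper the six generators are the \emph{products} $f_\sigma=L_\sigma\circ f_{id}$, where the $L_\sigma$ are six linear maps forming a $D_3$ that \emph{commutes} with $f_{id}$ (so the group is in fact $D_3\times\mathbb{Z}$, written as a trivial semidirect product); the six $f_\sigma$ are distinct quadratic maps sharing the same base locus and dynamical degree but realizing the six different permutations in their orbit data.

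Third, the upper bound is not a centralizer computation, and your claim that $\mathrm{Aut}(X)^*$ lands in the centralizer of $f^*$ is unjustified a priori (it happens to be true a posteriori only because the $L_\sigma$ commute with $f_{id}$). The paper's actual argument is arithmetic: every $g\in\mathrm{Aut}(X)$ must preserve $C$ (it is the unique cubic through the $15$ points with five on each line), its determinant $d(g)$ (the multiplier on $C$) is multiplicative and is a Galois conjugate of $\delta(g)$; a proposition on products of Salem numbers, combined with Lehmer's lower bound $\lambda_\star$ to exclude roots $\delta^{1/2},\delta^{1/3},\delta^{1/4}$, forces $\delta(g)=\delta^n$; then induction via the composition law for dynamical degrees reduces to the cases $\delta(g)=\delta$ (handled by the uniqueness of quadratic maps with orbit data $5,5,5$ fixing $C$, giving exactly the $f_\sigma^{\pm1}$) and $\delta(g)=1$ (giving $\mathcal{L}\cong D_3$, using that $C$ contains no curve of fixed points for a nonlinear automorphism). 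Without these ingredients --- none of which appear in your outline --- the exhaustion step, which you correctly identify as the crux, remains open.
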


\begin{thmb}\label{thmb} There is a non-cyclic subgroup $G\subset W_{15}$ generated by six elements such that $G$ is realized by automorphisms on a rational surface $X$ in Theorem A, that is, $G \cong Aut(X)^*$. Furthermore  $G$ is isomorphic to the automorphism group $Aut(X)$ \[G \cong \text{Aut} \, (X) \]
\end{thmb}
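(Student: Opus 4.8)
The plan is to first place $G:=\aut(X)^*$ inside $W_{15}$ and exhibit its generators, and then to prove that the natural homomorphism $\rho:\aut(X)\to W_X$, $f\mapsto f^*$, restricts to an \emph{isomorphism} onto $G$; the assertion $G\cong\aut(X)$ is precisely the injectivity of $\rho$. Since $X$ is obtained from $\mathbf{P}^2$ by blowing up the $15$ points of the orbit data of the construction, $\mathrm{Pic}(X)\cong\mathbb{Z}^{1,15}$ carries the intersection form of signature $(1,15)$, and every $f^*$ is an isometry fixing the canonical class $K_X$. By Nagata's theorem (cited in the introduction) such isometries lie in the Coxeter group $W_{15}=W_X$, so $G=\aut(X)^*\subseteq W_{15}$. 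By Theorem A, $\aut(X)$ is generated by the six quadratic automorphisms $g_1,\dots,g_6$, so their images $g_1^*,\dots,g_6^*$ generate $G$, giving the required six generators; and once $\rho$ is shown injective we obtain $G\cong\aut(X)\cong D_3\rtimes\mathbb{Z}$, which is non-cyclic.

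The whole theorem thus reduces to showing that $\ker\rho=\{f\in\aut(X):f^*=\mathrm{id}_{\mathrm{Pic}(X)}\}$ is trivial. Suppose $f^*=\mathrm{id}$. First, $f^*$ fixes the class $e_0$ of the total transform of a general line, so $f$ preserves the blow-down morphism $\pi:X\to\mathbf{P}^2$ and descends to a projective transformation $\bar f\in\mathrm{PGL}_3(\mathbb{C})$. Second, $f^*$ fixes each exceptional class $e_i$, so $\bar f$ fixes the image $p_i\in\mathbf{P}^2$ of every blown-up point. Hence everything comes down to forcing $\bar f=\mathrm{id}$.

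To do this I would use the invariant anticanonical curve of the construction. Since $f^*$ fixes $-K_X$ and the surface carries a unique anticanonical curve, a cuspidal cubic $C$, the map $f$ preserves $C$, and $\bar f$ preserves the cuspidal cubic $\bar C\subset\mathbf{P}^2$; on the smooth locus $C^{\mathrm{sm}}\cong(\mathbb{C},+)$ the restriction of $f$ is affine, $t\mapsto at+b$. As $\bar f$ fixes the base points, which occur at distinct parameters on $C$, the affine map fixes at least two distinct points, forcing $a=1$ and $b=0$; thus $\bar f$ fixes $\bar C$ pointwise. A projective transformation fixing a plane cubic pointwise is the identity, so $\bar f=\mathrm{id}$, and since $f$ then agrees with the identity on the dense open set $X\setminus\bigcup_i E_i\cong\mathbf{P}^2\setminus\{p_i\}$, continuity gives $f=\mathrm{id}$. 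Therefore $\ker\rho$ is trivial and $\rho:\aut(X)\to G$ is an isomorphism.

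The main obstacle is the geometric input in the last step: one must verify that the $15$ points produced in Theorem A genuinely lie on a single cuspidal anticanonical cubic $C$ which is the unique member of $|-K_X|$, and that at least two of them are distinct points of $C^{\mathrm{sm}}$ (equivalently, that the base locus is not degenerately aligned). This is exactly the structural feature separating the present example from Blanc's involutive construction, in which the analogous map fails to be injective. A secondary check is that $\aut(X)$ has no positive-dimensional part, so that $\ker\rho$ is a priori finite before the cubic argument collapses it to the identity; this is guaranteed since Theorem A identifies $\aut(X)$ with the discrete group $D_3\rtimes\mathbb{Z}$.
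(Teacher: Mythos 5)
Your proposal is correct and follows essentially the same route as the paper: both reduce Theorem B to the injectivity of $f\mapsto f^*$ (the paper's Proposition 7.9), and both prove injectivity by observing that $f^*=\mathrm{id}$ forces the descended map on $\mathbf{P}^2$ to fix all $15$ base points lying on the invariant anticanonical cubic, hence to fix that cubic pointwise, hence to be the identity. One factual slip: in this construction the anticanonical curve is not a cuspidal cubic but the union of three concurrent lines, each carrying five base points; your argument survives unchanged, since the descended linear map preserves each line, fixes five points on it, and a projective transformation fixing a plane cubic pointwise is the identity.
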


The essentially same construction as in Theorem A and Theorem B would give more infinite subgroups with multiple generators, which are realized by automorphisms on rational surfaces. 
\begin{rema} 
For each odd integer $n \ge 5$ such that $n\not \equiv 0 $ (mod $3$), there is a subgroup $G$ in the Coxeter group $W_{3 n}$ such that $G$ is realized by automorphisms on a rational surface $X_{3n}$ and $G \cong \text{Aut}(X_{3n}) \cong  D_3 \rtimes \mathbb{Z}$. The essentially same construction as in Theorem \ref{T:autgroup} gives the corresponding rational surface $X_{3n}$ and six quadratic automorphisms generating $\text{Aut}(X_{3n})$. 
\end{rema}

\begin{rema} 
For each odd integer $n \ge 5$ such that $n \equiv 0 $ (mod $3$), there is a subgroup $G$ in the Coxeter group $W_{3 n}$ such that $G$ is realized by automorphisms on a rational surface $X_{3n}$ and $G \cong \text{Aut}(X_{3n}) \cong  (\mathbb{Z}/2 \mathbb{Z})^2 \rtimes \mathbb{Z}$. In this case, the automorphism group is generated by $3$ quadratic automorphisms. Because $n$ is divisible by $3$, an automorphism on $X_{3n}$ can not cyclically permute three irreducible components of the invariant cubic. \end{rema} 

In our construction, the base points are in the non-singular part of the anti-canonical curve, which consists of three lines joining at a single point. The non-cyclic normal subgroup in the Automorphism group corresponds to rotations of three lines in the anti-canonical curve. If $n$ is even, we can not swap two lines. (We will explain the reason in Lemma \ref{L:orbitdata}.) However, we can cyclicly permute three lines.

\begin{rema}
If $n$ is even and is not divisible by $3$, then there is a subgroup $G$ in the Coxeter group $W_{3 n}$ such that $G$ is realized by automorphisms on a rational surface $X_{3n}$ and $G \cong \text{Aut}(X_{3n}) \cong  \mathbb{Z}/3 \mathbb{Z} \rtimes \mathbb{Z}$.
\end{rema}

With iterated blowups, one can create any number of fixed elements under the induced action on the Picard Group. Thus, the number of $p$-cycle is not an obstruction for realization. If an automorphism $f:X\to X$ realizes an element $\omega$ with a period $p$-cycle of $-1$ curves, then $f$ must be a lift of a birational map with a periodic point with period $p$. Using this idea, the following Theorem shows that not every element in $\cup_{n\ge 10} W_{n}$ is realizable.

\begin{thmc} There is an element $\omega \in W_{14}$ such that $\omega$ is not realizable and thus the cyclic subgroup $\langle \omega \rangle \subset W_{14}$ is not realizable as $Aut(X)^*$ for a rational surface $X$.
\end{thmc}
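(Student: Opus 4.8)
\emph{Proof strategy.} The engine of the argument will be the topological Lefschetz fixed point formula, which for an automorphism $f$ of a rational surface $X$ depends only on $\omega=f^{*}$. Since $X$ is rational we have $H^{1}(X)=H^{3}(X)=0$, so for every $k\ge 1$
\[
L(f^{k})\;=\;\sum_{i}(-1)^{i}\operatorname{tr}\!\big((f^{k})^{*}\mid H^{i}(X)\big)\;=\;2+\operatorname{tr}(\omega^{k}).
\]
The plan is to choose $\omega\in W_{14}$ so that, for a suitable power $p$, the right-hand side is \emph{too small} to be compatible with the periodic structure that $\omega$ imposes on any realization. Concretely, suppose $\omega$ contains a $p$-cycle of classes of $(-1)$-curves. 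In any realization these classes are represented by genuine, pairwise disjoint $(-1)$-curves $C_{1},\dots,C_{p}$ with $f(C_{j})=C_{j+1}$ (indices mod $p$), so $f^{p}$ fixes each $C_{j}\cong\mathbf{P}^{1}$ as a set. A local analysis of $f^{p}$ near each $C_{j}$ --- whether $C_{j}$ is fixed pointwise (contribution $\chi(C_{j})=2$, the normal multiplier being $\ne 1$ since positive entropy forbids $f^{p}$ from being locally trivial) or carries isolated fixed points of $f^{p}$ (whose $X$-indices are positive and sum to at least $\chi(\mathbf{P}^{1})=2$) --- shows that each $C_{j}$ contributes at least $2$ to $L(f^{p})$. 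As all remaining isolated fixed points contribute positively and all rational or elliptic fixed curves contribute $\ge 0$, we obtain the lower bound $L(f^{p})\ge 2p$, provided $f^{p}$ fixes no curve of genus $\ge 2$.

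I would then construct an explicit $\omega\in W_{14}$ with two competing features: a $p$-cycle of exceptional classes, and $\operatorname{tr}(\omega^{p})<2p-2$. Combining these with the displayed identity gives $L(f^{p})=2+\operatorname{tr}(\omega^{p})<2p\le L(f^{p})$, a contradiction, so no such $f$ exists and $\omega$ is not realizable. The natural candidate is $p=2$: a transposition of two exceptional classes is exactly the obstruction hinted at by the failure, when $n$ is even, to swap two components of the anticanonical configuration (cf.\ the heuristic that a period-$2$ orbit of base points lying on such a configuration is impossible because the induced map on each component is a non-torsion scaling). To realize $\operatorname{tr}(\omega^{2})<2$ I would take $\omega$ to be the product, inside $W_{14}$, of a positive-entropy Salem element with leading eigenvalue $\delta>1$ and a collection of cyclotomic reflection-blocks whose eigenvalues are primitive fourth (more generally $2p$-th) roots of unity, so that they contribute $-1$ per dimension to $\operatorname{tr}(\omega^{2})$ and overwhelm the $+2$ coming from the transposition together with the $\delta^{2}+\delta^{-2}$ coming from the Salem factor; the $15$-dimensional Picard lattice gives just enough room to balance these, subject to the requirement that no class be fixed, each fixed class contributing an unwanted $+1$.

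The step I expect to be the main obstacle is the construction itself: I must produce an element that simultaneously (i) genuinely lies in the Coxeter group $W_{14}$, i.e.\ is an integral isometry of the signature $(1,14)$ lattice expressible through the defining reflections, (ii) carries an honest $p$-cycle of \emph{exceptional} classes rather than of general $(-1)$-classes, and (iii) satisfies the sharp trace inequality $\operatorname{tr}(\omega^{p})<2p-2$. This is a finite but delicate search, since lowering $\operatorname{tr}(\omega^{p})$ competes against both the spectral radius $\delta>1$ needed for the element to resemble a realizable automorphism action and the positive contribution of any fixed class. The remaining loose end is the genus hypothesis in the Lefschetz bound: I would close it by showing that for this $\omega$ no realization admits an $f^{p}$-fixed curve of genus $\ge 2$, using that an $f$-invariant curve has class fixed by $\omega$ and that the $\omega$-invariant part of the Picard lattice is too small to support such a curve, together with the standard control of invariant curves of positive-entropy automorphisms. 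Once the contradiction is established, $\omega$ is not realizable; and since any realization of $\langle\omega\rangle$ as $\operatorname{Aut}(X)^{*}$ would in particular exhibit $\omega=f^{*}$, the cyclic subgroup $\langle\omega\rangle\subset W_{14}$ is not realizable either.
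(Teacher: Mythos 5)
Your overall idea---use a $p$-cycle of exceptional classes to force a periodic orbit of $(-1)$-curves and then contradict a Lefschetz count---is the same germ as the paper's argument, but as written your proposal has a genuine gap: the element $\omega$ is never produced. The theorem is an existence statement, and you explicitly defer the construction as ``the main obstacle.'' Worse, the quantitative criterion you set up, namely a $p$-cycle together with $\operatorname{tr}(\omega^{p})<2p-2$ so that $L(f^{p})=2+\operatorname{tr}(\omega^{p})<2p$, is strictly stronger than what is needed and is not satisfied by the natural witness. The paper's element is $\omega=\omega_1\omega_2\in W_{14}$ with $\omega_1=\vs_0\vs_1\cdots\vs_9$ the Coxeter element of $W_{10}$ (characteristic polynomial $(t-1)$ times Lehmer's polynomial) and $\omega_2$ a $4$-cycle on $\ve_{11},\dots,\ve_{14}$. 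Newton's identities give $\operatorname{tr}(\omega_1^{2})=\operatorname{tr}(\omega_1^{4})=2$, hence $\operatorname{tr}(\omega^{4})=2+4=6=2p-2$ exactly: your strict inequality fails and your absolute lower bound $L(f^{4})\ge 2p$ yields no contradiction. There is also a soft spot in that lower bound itself: if $f^{p}\vert_{C_j}$ is parabolic, the unique fixed point on $C_j$ is degenerate in $X$ and its local index need not be $\ge 2$, so ``each $C_j$ contributes at least $2$'' is not justified. Finally, your candidate $p=2$ is hard to reconcile with the room available in $\mathbb{Z}^{1,14}$: a positive-entropy Salem block plus a transposition block already forces $\operatorname{tr}(\omega^{2})$ well above $2$, and only two further dimensions remain for negative cyclotomic contributions.

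The paper's route avoids all of this by a relative rather than absolute count. It first blows down the $4$-cycle of disjoint $(-1)$-curves $\mathcal{E}_{11},\dots,\mathcal{E}_{14}$, so that a realization of $\omega$ would descend to an automorphism $f_1$ of a $10$-point blowup realizing $\omega_1$ and carrying a genuine period-$4$ orbit of \emph{points}. Since Lehmer's polynomial has no roots of unity, any curve of fixed points of $f_1^{4}$ is already $f_1$-fixed, so all non-fixed periodic points are isolated; positivity of holomorphic fixed-point indices together with $L(f_1^{2})=2+\operatorname{tr}(\omega_1^{2})=2+\operatorname{tr}(\omega_1^{4})=L(f_1^{4})$ then shows no fixed points of $f_1^{4}$ beyond those of $f_1^{2}$ can exist, ruling out an exact period-$4$ orbit. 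Comparing $L(f_1^{2})$ with $L(f_1^{4})$ only requires the two traces to be \emph{equal}, not a strict deficit against $2p$; that is exactly the slack your version is missing. To repair your proof you would need either to carry out this blow-down-and-compare step, or to exhibit a concrete $\omega\in W_{14}$ verifying your stronger inequality, which you have not done.
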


The unrealizable element $\omega \in W_{14}$ in Theorem C is not an essential element in the sense that $\omega$ is conjugate into a subgroup $W_{10} \subset W_{14}$. If a non-essential element is realized by a rational surface automorphism $f: X\to X$, then the surface $X$ is non-minimal. Thus, the more interesting/ correct question should be the following: ``Is every essential element in $\cup_{n\ge 10} W_{n}$ realizable?"

\vspace{1ex}

This article is organized as follows: Section \ref{S:birational} has basic background about birational maps focusing on quadratic maps. Section \ref{S:Coxeter} discusses the connection to rational surface automorphisms and proves Theorem C. In Section \ref{S:salem}, we discuss properties of Salem numbers, which are useful to identify the Automorphism group of the rational surface we construct. Sections \ref{S:qauto} - \ref{S:surface} provide the construction of the rational surface in Theorem A-B and describe six quadratic automorphisms constructed using the Diller's method in \cite{Diller:2011}. In Section \ref{S:autgroup}, we discuss properties of automorphisms on the rational surface constructed in the previous Sections and prove Theorem A. The proof of Theorem B is given in Section \ref{S:wsub}.

\subsection*{Acknowledgement}
The author thanks E. Hironaka for interesting discussions regarding Salem numbers. The author also thanks the referee for a thorough reading and thoughtful comments and suggestions.

\subsection*{Statements and Declarations}
This material is partly based upon work supported by the National Science Foundation under DMS-1928930 while the author participated in a program hosted by the Mathematical Sciences Research Institute in Berkeley, California, during Spring 2022 semester.

\section{Quadratic birational maps}\label{S:birational}
Due to Nagata \cite{Nagata, Nagata2, Harbourne:1987}, any automorphism $f:X \to X$ on a rational surface $X$ with positive entropy is a lift of a birational map on $\mathbf{P}^2$. There are three basic Cremona transformations:
\begin{equation*}
\begin{aligned}
&J_3: [x_1:x_2:x_3] \to [x_2 x_3:x_1 x_3:x_1 x_2]\\
&J_2 : [x_1:x_2:x_3] \to [x_1 x_3:x_2 x_3:x_1^2] \\
&J_1 : [x_1:x_2:x_3] \to [x_1^2:x_1 x_2:x_2^2-x_1 x_3].\\
\end{aligned}
\end{equation*} 
Each transformation $J_i$ is a quadratic involution with an exceptional locus consisting of $i$ distinct irreducible components. It is well-known \cite{Cerveau:2008} that a birational map on $\mathbf{P}^2$ can be written as a composition of automorphisms on $\mathbf{P}^2$ and three involutions $J_i, i=1,2,3$. In fact, any quadratic birational map can be obtained from one of three involutions, $J_i$ by pre- and post- composing automorphisms on $\mathbf{P}^2$. For a birational map $f$ with the indeterminacy locus $\mathcal{I}(f)$ and any curve $V$, let us use $f(V)$ for the set-theoretic strict transformation $f(V\setminus \mathcal{I}(f))$.

\subsection{The Cremona Involution}
The Cremona involution $J_3$ has three distinct exceptional lines $E_i := \{ x_i =0 \}$ and three points of indeterminacy $e_1:=[1:0:0], e_2:=[0:1:0]$ and $e_3:=[0:0:1]$. The map $J_3$ is locally bi-holomorphic on the complement of the union of exceptional lines and maps $E_i \setminus\{ e_1,e_2,e_3\}$ to a point $e_i$ for each $i=1,2,3$ \[ J_3: E_i \setminus \{ e_1, e_2, e_3 \} \mapsto e_i \qquad \text{ for all } i = 1,2,3. \]
Since the map $J_3$ preserves a pencil of lines joining $e_i$ and a point in $E_i$, the Cremona involution $J_3$ lifts to an automorphism on a rational surface $X$ obtained by blowing up $\mathbf{P}^2$ along a set of three points $\{e_1,e_2,e_3\}$. Let $\mathcal{E}_i$ be the irreducible exceptional curve over $e_i, i=1,2,3$ and $[\mathcal{E}_0]$ be the class of the strict transformation of a generic line in $\mathbf{P}^2$. The ordered set $\{ [\mathcal{E}_0],[\mathcal{E}_1],[\mathcal{E}_2],[\mathcal{E}_3]\}$ forms a geometric basis (See \cite{dolgachev2008reflection} for the definition.) for $Pic(X)$, in other words  $\{  [\mathcal{E}_0],[\mathcal{E}_i], i=1, \dots n \}$ is a basis such that \[  [\mathcal{E}_0] \cdot  [\mathcal{E}_0] =1,\ \ [\mathcal{E}_i] \cdot  [\mathcal{E}_i] =-1  \ \text{for } i=1, \dots n,\ \ \ \text{and} \ \ \  [\mathcal{E}_i] \cdot  [\mathcal{E}_j] =0 \ \text{for } i \ne j. \]
A geometric basis for a rational surface $X$ gives an isomorphism between $Pic(X)$ and a lattice $\mathbb{Z}^{1,n}$ with signature $(1,n)$. (See Section \ref{S:Coxeter}.)

\subsection{Two more Cremona Involutions}
The involution $J_2$ has two distinct exceptional lines $E_1,E_3$ and two points of indeterminacy $e_2,e_3$. 
\[ J_2: E_1 \setminus\{e_2,e_3\} \mapsto e_2, \qquad \text{and} \qquad E_3 \setminus\{e_2,e_3\} \mapsto e_3. \] Unlike $J_3$, this involution $J_2$ requires iterated blowups to lift to an automorphism: first blowing up two points $e_2, e_3$, then a point $e_2' = \mathcal{E}_2 \cap \{ x_3=0\}$ where $\mathcal{E}_2$ is the irreducible exceptional curve over $e_2$. Let $X$ be the resulting rational surface then the ordered set $\{ [\mathcal{E}_0],[\mathcal{E}'_2],[\mathcal{E}_2+\mathcal{E}'_2],[\mathcal{E}_3]\}$ forms a geometric basis for $Pic(X)$ where $\mathcal{E}'_2$ is the exceptional curve over $e_2'$. 

\vspace{1ex}
The exceptional locus of the involution $J_1$ consists of one irreducible line $E_1$ with multiplicity $3$, that is, one needs to blow up a point three times to resolve the indeterminacy. \[ J_1: E_1 \setminus \{ e_3\} \mapsto e_3. \] To resolve the indeterminacy, one needs to blow up the point $e_3$, $e_3':=\mathcal{E}_3 \cap \{ x_1=0\}$, and $e_3'':=\mathcal{E}_3' \cap \{ x_1^2= x_2\}$ where $\mathcal{E}_3$, $\mathcal{E}_3'$ and $\mathcal{E}_3''$ are irreducible exceptional curves over $e_3$, $e_3'$ and $e_3''$ (respectively). Again, let $X$ be the resulting blowup space. In this case a geometric basis for $Pic(X)$ is given by $\{ [\mathcal{E}_0],[\mathcal{E}''_3],[\mathcal{E}'_3+\mathcal{E}''_3],[\mathcal{E}_3+\mathcal{E}'_3+\mathcal{E}''_3]\}$.

\begin{rem}
With the given geometric basis above, the lifts of all three involutions $J_i, i=1,2,3$ share the same linear action on $Pic(X)$. 
\end{rem}

\subsection{Quadratic Birational Maps}
It is known \cite{Cerveau:2008} that every quadratic birational map on $\mathbf{P}^2$ can be written as $T^- \circ J_i \circ (T^+)^{-1}, i=1,2,3$ with $T^\pm \in \text{Aut}\,(\mathbf{P}^2)$. In fact, two Cremona involutions $J_1$ and $J_2$ can be written as compositions of linear maps and $J_3$. 
We say a birational map $ \check f$ is \textit{basic} if $ \check f=T^- \circ J_3 \circ (T^+)^{-1}$.  A basic birational map $\check f= T^- \circ J_3 \circ (T^+)^{-1}$ has three distinct exceptional lines $E^+_i$ and the indeterminacy locus $\mathcal{I} = \{ p_i^+, i=1,2,3 \}$ where \[ E_i^+ = T^+ \{ x_i = 0 \}, \qquad \text{and} \qquad p_i^+ = T^- e_i.\] Similarly, we have three exceptional lines $E_i^-$ and three points of indeterminacy $p_i^-$ for $f^{-1}$.
Let us denote $\check f(V)$ the strict transform, $\check f (V \setminus\mathcal{I} ( \check f))$, of $V$ under $ \check f$. We have
\[ \check f : E_i^+ \mapsto p_i^-, \qquad\text{and} \qquad \check f^{-1} : E_i^- \mapsto p_i^+ \qquad \text{for } i =1,2,3.\] Note that any non-basic quadratic birational map is obtained from either $J_2$ or $J_1$, and thus the number of irreducible components of its exceptional locus is strictly less than $3$. 

\subsection{Dynamical Degree} Suppose $g:X \dasharrow X$ is a birational map on a compact complex manifold. In this case, the Picard group $Pic(X)$ is identical to the N\'{e}ron-Severi group $NS(X)$. There is a natural induced linear map $g^* : Pic(X) \to Pic(X)$. The dynamical degree $\lambda(g)$ is birational invariant and given by 
\[ \lambda(g) : = \lim_{n \to \infty} ||(g^n)^*||^{1/n}. \] 
We say $g$ is \textit{algebraically stable} if the induced linear action $g^*: Pic(X) \to Pic(X)$ satisfies the following condition
 $ (g^*)^n \ =\ (g^n)^*$ for all $\ n \ge 1$. If $g$ is algebraically stable, the dynamical degree is given by the spectral radius of the linear action $g^*$.

In general, a birational map $\check f: \mathbf{P}^2 \dasharrow \mathbf{P}^2$ does not satisfy the algebraic stability condition. However, Diller and Favre \cite{Diller-Favre:2001} showed that one can always construct an algebraically stable modification, that is, there is a blowup $X= B\ell_P \mathbf{P}^2$ along a finite set $P$ of points such that the lift $f:X\dasharrow X$ is algebraic stable and the dynamical degree of $\check f$ is given by $\lambda(\check f) = \lambda(f)$, the spectral radius of the induced linear action $f^*$ on $Pic(X)$. Blanc and Cantat \cite{blancdynamical} showed that a birational map $\check f$ lifts to an automorphism if and only if the dynamical degree is a Salem number.

\subsection{Orbit Data}
Suppose $\check f$ is basic. For each exceptional line $E_i^+$, we define $n_i \in \mathbb{N} \cup \{ \infty\}$ by 
\[ \left\{ \begin{aligned} & n_i \ = \ \min \{ n: \text{dim} \check f^n (E_i^+) \lneq \text{dim} \check f^{n+1} (E_i^+) \},\\ & n_i \ =\ \infty \qquad \text{ if } \ \ \text{dim} \check f^n (E_i^+) = 0 \ \ \text{for all } n\ge 1 \end{aligned} \right. \]
If $n_i < \infty$, the last point of the orbit $\check f^{n_i} E_i^+ \in \mathcal{I}^+ = \{ p_1^+,p_2^+,p_3^+\}$. Thus we can also define a permutation $\sigma \in \Sigma_3$ such that 
\[ \check f^{n_i} E_i^+ = p_{\sigma(i)}^+.\]
We call this numerical information $n_1, n_2, n_3$ and a permutation $\sigma$ the orbit data of the basic birational map $ f$. 

\begin{rem}For a non-basic quadratic birational map, we count the orbit length with multiplicity with the identity permutation. For example, if $\check f = T^- \circ J_1 \circ (T^+)^{-1}$, we let $E_1^+ = T^+\{x_1=0\}$ and the orbit data are $n_1, n_1,n_1$ and the identity permutation. 
\end{rem}

\begin{lem}
If $\check f: \mathbf{P}^2 \dasharrow \mathbf{P}^2$ is a basic quadratic birational map, then so is the inverse $\check f^{-1}$.
Furthermore If $\check f$ has orbit data $n_1,n_2,n_3$ with a permutation $\sigma \in \Sigma_3$, then the orbit data for $\check f^{-1}$ are given by $n_{\sigma^{-1}(1)},n_{\sigma^{-1}(2)},n_{\sigma^{-1}(3)}$ with $\sigma^{-1}$.
\end{lem}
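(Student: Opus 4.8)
The plan is to first verify that $\check f^{-1}$ is again basic, to record the blow-up/contraction dictionary relating $\check f$ and $\check f^{-1}$, and then to show that the orbit segment computing the data of $\check f^{-1}$ is exactly the reverse of the one computing the data of $\check f$. Since $J_3$ is an involution,
\[ \check f^{-1} \ =\ \bigl( T^-\circ J_3\circ (T^+)^{-1}\bigr)^{-1} \ =\ T^+\circ J_3\circ (T^-)^{-1}, \]
which already has the basic form $\tilde T^-\circ J_3\circ (\tilde T^+)^{-1}$ with $\tilde T^-=T^+$ and $\tilde T^+=T^-$; this proves the first assertion and identifies the exceptional lines and indeterminacy points of $\check f^{-1}$ as the $E_i^-$ and $p_i^-$. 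Reading the relations $\check f\colon E_i^+\mapsto p_i^-$ and $\check f^{-1}\colon E_i^-\mapsto p_i^+$ in the opposite direction, $\check f$ blows the indeterminacy point $p_i^+$ up to $E_i^-$ and $\check f^{-1}$ blows $p_i^-$ up to $E_i^+$, with matching indices.

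Next I would fix $i$ and, using the minimality in the definition of $n_i$, write the orbit of $E_i^+$ as the chain
\[ E_i^+\ \xrightarrow{\ \check f\ }\ p_i^-\ \xrightarrow{\ \check f\ }\ q_2\ \xrightarrow{\ \check f\ }\ \cdots\ \xrightarrow{\ \check f\ }\ q_{n_i-1}\ \xrightarrow{\ \check f\ }\ p_{\sigma(i)}^+\ \xrightarrow{\ \check f\ }\ E_{\sigma(i)}^-, \]
in which the $n_i$ points $q_1=p_i^-, q_2,\dots, q_{n_i}=p_{\sigma(i)}^+$ all have dimension $0$ and the dimension first increases at step $n_i+1$, when the indeterminacy point $p_{\sigma(i)}^+$ of $\check f$ is blown up to $E_{\sigma(i)}^-$. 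Minimality of $n_i$ is exactly the statement that $q_1,\dots,q_{n_i-1}\notin \mathcal I(\check f)$ while $q_{n_i}=p_{\sigma(i)}^+\in\mathcal I(\check f)$.

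The key step is that $\check f^{-1}$ traverses this chain backwards. At each $q_r$ with $2\le r\le n_i$ the map $\check f^{-1}$ is a local biholomorphism carrying $q_r$ to $q_{r-1}$ (the last paragraph explains why $q_r\notin\mathcal I(\check f^{-1})$ there), the right-hand end is reversed by the contraction $\check f^{-1}\colon E_{\sigma(i)}^-\mapsto p_{\sigma(i)}^+$, and the left-hand end by the blow-up $\check f^{-1}\colon p_i^-\mapsto E_i^+$. Therefore the orbit of $E_{\sigma(i)}^-$ under $\check f^{-1}$ is
\[ E_{\sigma(i)}^-\ \xrightarrow{\ \check f^{-1}\ }\ p_{\sigma(i)}^+\ \xrightarrow{\ \check f^{-1}\ }\ \cdots\ \xrightarrow{\ \check f^{-1}\ }\ p_i^-\ \xrightarrow{\ \check f^{-1}\ }\ E_i^+, \]
a chain of the same length $n_i$ whose terminal indeterminacy point is $p_i^-$. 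Writing $j=\sigma(i)$, this says that the orbit length of $E_j^-$ equals $n_i=n_{\sigma^{-1}(j)}$ and that the permutation attached to $\check f^{-1}$ sends $j\mapsto\sigma^{-1}(j)$. Letting $i$, and hence $j$, run over $\{1,2,3\}$ produces the orbit data $n_{\sigma^{-1}(1)},n_{\sigma^{-1}(2)},n_{\sigma^{-1}(3)}$ together with $\sigma^{-1}$, as claimed.

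The step I expect to need the most care is the clean reversal, namely checking that $q_2,\dots,q_{n_i}$ avoid $\mathcal I(\check f^{-1})=\{p_1^-,p_2^-,p_3^-\}$; otherwise $\check f^{-1}$ would blow up prematurely and shorten the reversed orbit. Suppose $q_r=p_l^-$ for some $2\le r\le n_i$. Since $p_l^-=\check f(E_l^+)$ is the first point of the orbit of $E_l^+$, from position $r$ onward the orbit of $E_i^+$ coincides with the orbit of $E_l^+$; comparing the steps at which each first meets $\mathcal I(\check f)$ then gives $n_l=n_i-r+1$ and $\sigma(l)=\sigma(i)$. As $\sigma$ is a permutation this forces $l=i$, and then $n_i=n_i-r+1$ yields $r=1$, contradicting $r\ge2$. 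Hence no such $r$ exists, the reversal is valid, and the same reversal applied to an index with $n_i=\infty$ shows the corresponding orbit of $\check f^{-1}$ is again infinite, so the finite/infinite bookkeeping is preserved.
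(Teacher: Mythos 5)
Your proof is correct and starts from the same observation as the paper's one-line proof --- inverting the decomposition $\check f = T^-\circ J_3\circ (T^+)^{-1}$ to get $\check f^{-1} = T^+\circ J_3\circ (T^-)^{-1}$ --- after which the paper simply declares the orbit-data statement obvious. Your careful reversal of the orbit chain, and in particular the verification that no $q_r$ with $r\ge 2$ lies in $\mathcal{I}(\check f^{-1})$ (so the reversed orbit is not shortened prematurely), supplies exactly the details the paper omits.
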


\begin{proof} 
Since every quadratic map $\check f$ is given by a composition $T^- \circ J_i \circ (T^+)^{-1}$ for some $i=1,2,3$, it is obvious. 
\end{proof}

\subsection{Lifting to an Automorphism}\label{SS:basis}
Suppose the orbit data of a basic birational map $\check f$ 
consists of three positive integers $n_1,n_2,n_3 < \infty$ and let $P_i= \{ p_{i,j} := \check f^j E_i^+, \ 1 \le j \le n_i \}$. Then $\check f$ lifts to an automorphism on a blowup 
 \[ \pi : X = X_3 \xrightarrow{\pi_3} X_{2} \xrightarrow{\pi_{2}} X_1 \xrightarrow{\pi_1} X_0 = \mathbf{P}^2 \] 
where $\pi_j:X_j\to X_{j-1}$ is a blowup of a set of points $P_{i_j} \subset X_{j-1}$ with $n_{i_1}\le n_{i_2} \le n_{i_3}$.

If a quadratic birational map $\check f = T^- \circ J_2 \circ (T^+)^{-1}$, then the orbit lengths are $n_1, n_1, n_3$. Let $P_i= \{ p_{i,j} := \check f^j E_i^+, \ 1 \le j \le n_i \}$ for $i=1,3$. When both $n_1, n_3 < \infty$, $\check f$ lifts to an automorphism on a rational surface
 \[ \pi : X = X_{2} \xrightarrow{\pi_{2}} X_1 \xrightarrow{\pi_1} X_0 = \mathbf{P}^2 \] 
where 
\begin{itemize}
\item $\pi_1$ is a successive blowup of two set of points $P_1$ and $P_3$ in the increasing order of $n_i$, and 
\item $\pi_2$ is a blowup of a set of points $P_2 : = \{ p_{2,j} := \check f_1^j E_1^+, \ 1 \le j \le n_1 \} \subset X_1$ where $f_1:X_1 \to X_1$ is the induced biratioanl map. 
\end{itemize}

Lastly, if a quadratic birational map $\check f = T^- \circ J_1 \circ (T^+)^{-1}$ with $n_1<\infty$, then the rational surface $\pi:X\to \mathbf{P}^2$ is constructed by successive blowups along a set of poitns $P_1 = \{ p_{1,j} := \check f^j E_1^+, \ 1 \le j \le n_1\}$, $P_2 : = \{ p_{2,j} := \check f_1^j E_1^+, \ 1 \le j \le n_1 \} \subset X_1$, and $P_3 : = \{ p_{3,j} := \check f_2^j E_1^+, \ 1 \le j \le n_1 \} \subset X_2$. 

\vspace{1ex}
In all three cases, a geometric basis of $Pic(X)$ is given by the classes of total transformations of exceptional curves over a point in $P= \cup P_i$ together with the class of a generic line.

\section{Coxeter group $W_n$}
\label{S:Coxeter}
Let $\mathbb{Z}^{1,n}$ be the unimodular lattice of signature $(1,n)$ with a basis $( \ve_0,\ve_1,\dots,\ve_n)$ such that \[ \ve_0^2 = 1, \quad \ve_i^2 = -1 \quad \text{ for } i \ge 1, \quad \text{ and } \quad \ve_i \cdot \ve_j = 0 \quad \text{for } i \ne j. \] Suppose $n \ge 3$. Let \[ \kappa_n = -3 \ve_0 + \ve_1 + \ve_2 + \cdots +\ve_n, \qquad \text{and} \] \[ \alpha_0 = \ve_0 - \ve_1- \ve_2- \ve_3,\quad \alpha_i = \ve_i - \ve_{i+1} \ \ \text{ for } i \ge 1. \]
The sublattice $L_n$ with a basis $(\alpha_0, \alpha_1, \dots, \alpha_{n-1})$ is the orthogonal complement of the vector $\kappa_n$. 
Since $\alpha_i^2 = -2$ for all $i \ge 0$, we can define the reflection $\vs_i$ through the vector $\alpha_i$ by \[ \vs_i : x \mapsto x + (x \cdot \alpha_i) \alpha_i. \]
These reflections generate the Coxeter group $W_n \subset O(L_n)$ with respect to the graph $E_n$ in Figure \ref{fig:coxetergraph}. Let $m_{i,i}=1$, and $m_{i,j} = 3 $ if $\vs_i$ and $\vs_j$ are joined by an edge, then we have
\[ W_n = \langle \vs_0,\vs_1,\dots, \vs_{n-1} \,|\, (\vs_i \vs_j)^{m_{i,j} }=1 \rangle. \]
 The subgroup $S_n$ generated by $\{ \vs_1,\vs_2, \dots, \vs_{n-1}\} \subset W_n $ is isomorphic to a permutation group on $n$ elements. Each element in $S_n$ permutes the basis elements $\mathbf{e}_1, \dots, \mathbf{e}_n$. Let $\vs \in S_n$ such that $\vs$ interchanges $\mathbf{e}_1\leftrightarrow \mathbf{e}_i$, $\mathbf{e}_2 \leftrightarrow \mathbf{e}_j$, $\mathbf{e}_3 \leftrightarrow \mathbf{e}_k$ for some distinct indcies $i,j,k$, and fixes everything else. Then we have \[ \vs_{ijk} = \vs^{-1} \circ \vs_0 \circ \vs \in W_n.\] This element $\vs_{ijk} \in W_n$ acts on $\mathbb{Z}^{1,n}$ as a reflection through the vector $\mathbf{e}_0 - \mathbf{e}_i-\mathbf{e}_j-\mathbf{e}_k$.

\begin{figure}
\centering
\includegraphics{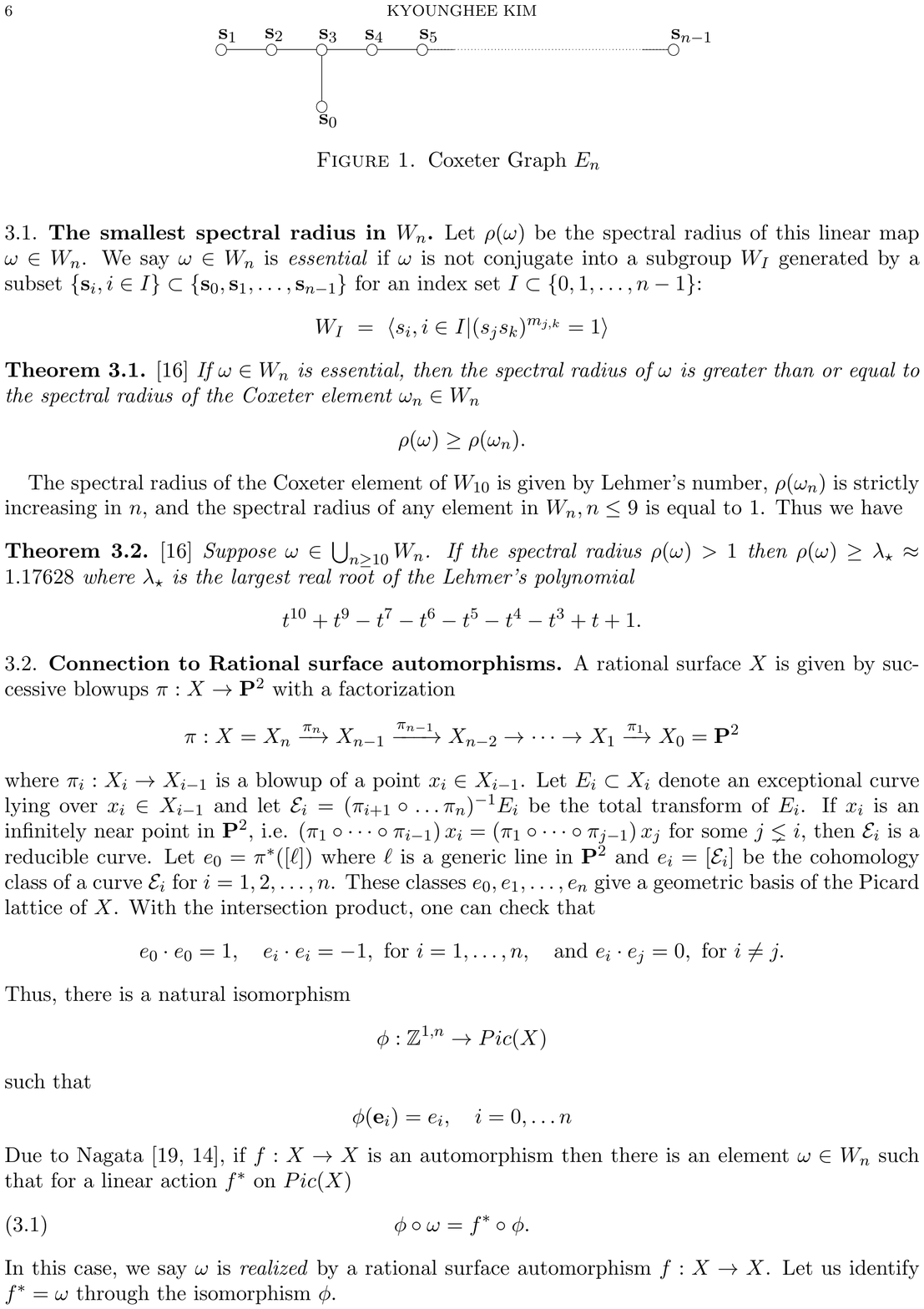}
\caption{Coxeter Graph $E_n$ \label{fig:coxetergraph} }
\end{figure} 

\subsection{The smallest spectral radius in $W_n$.}
Let $\rho(\omega)$ be the spectral radius of this linear map $\omega \in W_n$. 
We say $\omega \in W_n$ is \textit{essential} if $\omega $ is not conjugate into a subgroup $W_I$ generated by a subset  $\{ \vs_i, i \in I\} \subset \{ \vs_0, \vs_1, \dots, \vs_{n-1} \}$ for an index set $I \subset \{0,1, \dots, n-1\}$: 
\[ W_I \ =\  \langle  s_i, i \in I | (s_j s_k)^{m_{j,k}} = 1 \rangle \]

\begin{thm}\cite{McMullen:2002} 
If $\omega \in W_n$ is essential, then the spectral radius of $\omega$ is greater than or equal to the spectral radius of the Coxeter element $\omega_n\in W_n$ \[ \rho(\omega) \ge \rho(\omega_{n}). \]
\end{thm}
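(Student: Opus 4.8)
The plan is to follow McMullen's hyperbolic-geometric strategy, in which the spectral radius is reinterpreted as a translation length. First I would record that for $n \ge 10$ the vector $\kappa_n$ is negative, since $\kappa_n^2 = 9-n < 0$, so the form on $L_n \otimes \mathbb{R}$ has signature $(1,n-1)$ and the projectivized positive cone is a model of hyperbolic space $\mathbb{H}^{n-1}$, on which $W_n$ acts by isometries, each $\vs_i$ reflecting in the wall $\alpha_i^{\perp}$ with the Weyl chamber as fundamental domain. The cases $n \le 9$ are immediate: there the form is definite or degenerate, every $\omega$ satisfies $\rho(\omega) \ge 1$ because the eigenvalues of a form-isometry come in reciprocal pairs or lie on the unit circle, while $\rho(\omega_n) = 1$. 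The crux of the reduction is the identity: an element $\omega$ with $\rho(\omega) > 1$ acts loxodromically on $\mathbb{H}^{n-1}$, its leading eigenvector lies on the light cone, and its translation length satisfies $\ell(\omega) := \inf_x d(x, \omega x) = \log \rho(\omega)$, with the remaining eigenvalues on the unit circle coming from the rotational part in the negative-definite complement of the axis. Thus minimizing $\rho$ over essential elements is equivalent to minimizing translation length.

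The second step is to show that essential elements are precisely the loxodromic ones, so that the translation length is strictly positive and the logarithm is meaningful. I would argue the contrapositive: if $\rho(\omega) = 1$, then $\omega$ has no eigenvalue off the unit circle and therefore fixes a point of the closed space $\overline{\mathbb{H}^{n-1}}$, either an interior point (elliptic) or a boundary point (parabolic). In either case the standard theory of the Tits cone shows that the stabilizer of such a fixed point is, up to conjugacy, a standard parabolic subgroup $W_I$ for some proper $I \subsetneq \{0,\dots,n-1\}$ (of finite type for an interior fixed point, of affine type for a cusp), so $\omega$ is conjugate into $W_I$ and hence not essential. This uses only the classification of fixed points of isometries of $\mathbb{H}^{n-1}$ together with the correspondence between fixed loci and standard parabolic subgroups.

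The final and hardest step is the sharp lower bound $\ell(\omega) \ge \ell(\omega_n)$ among loxodromic elements, with the Coxeter element $\omega_n = \vs_0 \vs_1 \cdots \vs_{n-1}$ realizing the minimum. Here I would pass to the Hilbert metric on the positive cone, under which the reflections act as isometries and a loxodromic element contracts toward its attracting ray at rate $\rho(\omega)$. The axis of $\omega$ crosses a bi-infinite sequence of walls of the $W_n$-arrangement, and I would bound the displacement below by the total wall-crossing cost of one period; essentiality forces this crossing sequence not to be confined to the walls of any proper $W_I$, so every generator's wall-type must occur. The Coxeter element realizes the cheapest such periodic bounce pattern, crossing each of the $n$ wall-types exactly once per period, and I expect the main obstacle to be the convexity/monotonicity estimate for the Hilbert-metric displacement that makes this "each wall once" configuration optimal and rules out any cheaper essential pattern. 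Once that estimate is established, combining it with the direct computation of $\ell(\omega_n) = \log \rho(\omega_n)$ from the characteristic polynomial of $\omega_n$ (whose leading root is the relevant Salem number) yields $\rho(\omega) \ge \rho(\omega_n)$.
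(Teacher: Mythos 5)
The paper does not actually prove this statement; it is quoted from McMullen \cite{McMullen:2002}, so there is no in-text argument to compare yours against. That said, your outline does follow the broad strategy of McMullen's proof: the signature computation $\kappa_n^2=9-n$, the identification of $\log\rho(\omega)$ with a translation length, the reduction of "essential $\Rightarrow$ spectral radius $>1$" to the classification of fixed points and parabolic subgroups, and the role of the (bicolored) Coxeter element are all the right ingredients, and your dispatch of the cases $n\le 9$ is correct.

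The problem is that your third step, which carries the entire content of the theorem, is a description of what an argument would need to accomplish rather than an argument. The inequality $\ell(\omega)\ge\ell(\omega_n)$ does not follow from "every wall-type must be crossed" plus "the Coxeter element crosses each wall-type once per period": displacement in the hyperbolic or Hilbert metric is not an additive sum of per-wall-type costs, the contribution of a crossing depends on where and at what angle the axis meets the wall, and a priori an essential element could cross representatives of all $n$ wall-types along a very short segment near a region where many walls nearly coincide. You concede this yourself ("I expect the main obstacle to be the convexity/monotonicity estimate\dots Once that estimate is established"), which is an acknowledgement that the decisive estimate is missing. McMullen's actual proof of this step does not sum crossing costs; it combines A'Campo-type control of $\rho(\omega_n)$ via the spectral radius of the adjacency matrix of the Coxeter graph with a Perron--Frobenius positivity comparison carried out in the Hilbert metric on the Tits cone, and that comparison is the technically hard core of his paper. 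A secondary inaccuracy: essential elements are not "precisely" the loxodromic ones --- the element $\omega\in W_{14}$ in this paper's Theorem C is loxodromic (its spectral radius is Lehmer's number) yet conjugate into $W_{10}$ and hence non-essential; only the implication essential $\Rightarrow$ loxodromic is true, which is fortunately the direction your argument uses.
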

The spectral radius of the Coxeter element of $W_{10}$ is given by Lehmer's number, $\rho(\omega_n)$ is strictly increasing in $n$, and the spectral radius of any element in $W_n, n\le 9$ is equal to $1$. Thus we have
\begin{thm}\cite{McMullen:2002} \label{C:smallestSalem}
Suppose $\omega \in \bigcup_{n\ge 10}W_n$. If the spectral radius $\rho(\omega)>1$ then $\rho(\omega) \ge \lambda_{\star} \approx 1.17628$ where $\lambda_\star$ is the largest real root of the Lehmer's polynomial \[ t^{10}+t^9-t^7-t^6-t^5-t^4-t^3+t+1. \]
\end{thm}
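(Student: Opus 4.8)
The plan is to reduce the general statement to the essential case already settled by the preceding theorem, exploiting that the spectral radius $\rho$ is invariant under conjugation in $O(L_n)$. Given $\omega \in W_n$ with $n \ge 10$ and $\rho(\omega) > 1$, I would first choose, among all index sets $J$ and all conjugates of $\omega$ that lie in the parabolic subgroup $W_J$, a pair for which $|J|$ is minimal. Replacing $\omega$ by this conjugate leaves $\rho(\omega)$ unchanged, so I may assume $\omega \in W_J$ and that $\omega$ is \emph{essential} in $W_J$. Indeed, were $\omega$ non-essential in $W_J$, it would be conjugate, within $W_J$, into a proper parabolic $W_{J'}$ with $J' \subsetneq J$, and composing conjugations would contradict the minimality of $|J|$.

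The next step is to identify $W_J$. Since every element of a finite or affine Coxeter group has all its eigenvalues on the unit circle, the hypothesis $\rho(\omega) > 1$ forces $W_J$ to be of indefinite type. I would then examine the connected components of the Coxeter graph on $J$, which is an induced subgraph of the tree $E_n$. Because $E_n = T_{2,3,n-3}$ has a single branch vertex $\vs_3$ whose two short arms have fixed lengths $1$ and $2$, every connected induced subtree is of type $A_k$, $D_k$, or $E_m$; the first two families, together with $E_m$ for $m \le 9$, are of finite or affine type, so the only connected components that can carry an eigenvalue off the unit circle are the standard $E_m$ with $m \ge 10$. Decomposing $\omega$ according to the product structure of $W_J$, the spectral radius of $\omega$ equals the maximum of the spectral radii of its block factors; hence at least one factor is an essential element of such an $E_m = W_m$ with $10 \le m \le n$.

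Finally I would apply the preceding theorem to this $E_m$-factor: being essential in $W_m$, it has spectral radius at least $\rho(\omega_m)$. Since $\rho(\omega_m)$ is strictly increasing in $m$ and $\rho(\omega_{10})$ is Lehmer's number $\lambda_\star$, I obtain
\[ \rho(\omega) \ \ge\ \rho(\omega_m) \ \ge\ \rho(\omega_{10}) \ =\ \lambda_\star, \]
which is exactly the asserted bound.

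I expect the main obstacle to be the combinatorial classification step: verifying carefully that no connected induced subtree of $E_n$ other than the standard $E_m$, $m \ge 10$, is of indefinite type. This amounts to ruling out $\tilde D_k$, $\tilde E_6$, and $\tilde E_7$ as induced subdiagrams, which relies on the branch vertex being unique and the two short arms being uncompletable, and then handling the bookkeeping for a reducible parabolic $W_J$ so that ``essential in $W_J$'' genuinely produces an essential factor supported on the $E_m$-component. Once this reduced essential element is in hand, the passage to the bound is immediate from the cited theorem and the two stated monotonicity facts.
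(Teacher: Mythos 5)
Your argument is correct and follows essentially the same route as the paper, which derives the statement from the preceding theorem on essential elements together with the facts that $\rho(\omega_{10})=\lambda_\star$, that $\rho(\omega_n)$ is strictly increasing in $n$, and that every element of $W_n$ for $n\le 9$ has spectral radius $1$. You have merely made explicit the reduction to the essential case (passage to a minimal parabolic and the classification of connected induced subdiagrams of $E_n$ as $A_k$, $D_k$, or $E_m$), a step the paper leaves implicit by citing McMullen.
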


\subsection{Connection to Rational surface automorphisms}

A rational surface $X$ is given by successive blowups $\pi:X \to \mathbf{P}^2$ with a factorization \[ \pi : X = X_n \xrightarrow{\pi_n} X_{n-1} \xrightarrow{\pi_{n-1}} X_{n-2} \rightarrow \cdots \rightarrow X_1 \xrightarrow{\pi_1} X_0 = \mathbf{P}^2 \] where $\pi_i : X_i \to X_{i-1}$ is a blowup of a point $x_i \in X_{i-1}$. Let $E_i \subset X_i$ denote an exceptional curve lying over $x_i \in X_{i-1}$ and let $\mathcal{E}_i = ( \pi_{i+1} \circ \dots \pi_n)^{-1} E_i$ be the total transform of $E_i$. If $x_i $ is an infinitely near point in $\mathbf{P}^2$, i.e. $(\pi_{1} \circ \dots \circ \pi_{i-1}) \, x_i = (\pi_{1} \circ \dots \circ \pi_{j-1}) \,x_j$ for some $j \lneq i$, then $\mathcal{E}_i$ is a reducible curve. Let $e_0 = \pi^*([\ell])$ where $\ell$ is a generic line in $\mathbf{P}^2$ and $e_i = [\mathcal{E}_i]$ be the cohomology class of a curve $\mathcal{E}_i$ for $i = 1, 2, \dots, n$. These classes $e_0,e_1, \dots, e_n$ give a geometric basis of the Picard lattice of $X$. With the intersection product, one can check that \[ e_0 \cdot e_0 = 1, \quad e_i \cdot e_i = -1, \text{ for } i =1, \dots, n, \quad \text{and } e_i \cdot e_j =0, \text{ for }i \ne j. \]
Thus, there is a natural isomorphism 
\[ \phi : \mathbb{Z}^{1,n} \to Pic(X) \] such that \[ \phi(\mathbf{e}_i) = e_i,\quad i = 0, \dots n \] Due to Nagata \cite{Nagata2,Dolgachev-Ortland}, if $f:X \to X$ is an automorphism then there is an element $\omega \in W_n$ such that for a linear action $f^*$ on $Pic(X)$ \begin{equation}\label{E:iden} \phi\circ \omega = f^* \circ \phi. \end{equation} In this case, we say $\omega$ is \textit{realized} by a rational surface automorphism $f :X \to X$. Let us identify $f^* = \omega$ through the isomorphism $\phi$.

\subsection{Not-realizable element}
Let us consider $\omega \in W_{16}$ given by $\omega = \omega_1 \cdot \omega_2$ where 
\begin{equation}\label{E:norealization}
\omega_1= \vs_0 \vs_1 \cdots \vs_9 \in W_{n\ge 10}, \quad \text{and} \quad \omega_2 = \vs_{11} \vs_{12} \vs_{13} \in W_{14}. 
\end{equation}
The action $\omega$ on $\mathbb{Z}^{1,14}$ is given by  
\begin{equation}\label{E:nAction}
\begin{aligned}
\omega\ :\ \ & \ve_0 \mapsto 2 \ve_0- \ve_1-\ve_2-\ve_3, \\
& \ve_1 \mapsto \ve_0 - \ve_1-\ve_3,\ \ \ \ve_2 \mapsto \ve_0 - \ve_1 - \ve_2,\\
& \ve_3 \mapsto \ve_4 \mapsto \cdots \mapsto \ve_{10} \mapsto \ve_0 - \ve_2-\ve_3,\\
&\ve_{11} \mapsto \ve_{12}\mapsto \ve_{13} \mapsto \ve_{14} \mapsto \ve_{11}.\\
\end{aligned}
\end{equation}

\begin{lem}
If $\omega$ is realized by an automorphism $f:X \to X$ on a rational surface $X$, then there is a rational surface $X_1$ obtained by blowing down four $(-1)$-curves $\mathcal{E}_i, i=11, \dots, 14$ with $e_i = [\mathcal{E}_i]$ to $4$ distinct points in $X_1$ and the induced map $f_1:X_1 \to X_1$ realizes $\omega_1 \in W_{10}$ and there is a period $4$ cycle of points. 
\end{lem}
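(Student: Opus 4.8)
The plan is to read off from the action \eqref{E:nAction} that the classes $e_{11}, e_{12}, e_{13}, e_{14}$ are permuted cyclically by $\omega$, and to argue that each $\mathcal{E}_i$, $i=11,\dots,14$, is an irreducible $(-1)$-curve that can be simultaneously contracted. First I would verify that each $e_i$ with $i \in \{11,12,13,14\}$ has self-intersection $-1$ and that $\omega$ maps the set $\{e_{11},e_{12},e_{13},e_{14}\}$ to itself as a $4$-cycle; this is immediate from the last line of \eqref{E:nAction}. Since $f^* = \omega$ under the identification $\phi$, the automorphism $f$ permutes the corresponding curve classes, so $f$ carries the curve $\mathcal{E}_{11}$ to $\mathcal{E}_{12}$, and so on cyclically. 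The key point is that an automorphism sends irreducible curves to irreducible curves and preserves intersection numbers, so once I know one of these classes is represented by an irreducible $(-1)$-curve, all four are, and moreover the four curves are pairwise disjoint because $e_i \cdot e_j = 0$ for $i \neq j$ in the geometric basis.

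Next I would check that the four classes $e_{11}, \dots, e_{14}$ satisfy $e_i \cdot \kappa_n = \kappa_n \cdot e_i = 1$ (equivalently $e_i \cdot K_X = -1$), confirming via adjunction that any irreducible curve in class $e_i$ is a smooth rational $(-1)$-curve; combined with pairwise disjointness this gives four disjoint $(-1)$-curves. By Castelnuovo's contractibility criterion I can then blow down all four simultaneously via a morphism $\beta : X \to X_1$ to distinct points $q_i = \beta(\mathcal{E}_i) \in X_1$, $i = 11,\dots,14$, where $X_1$ is again a smooth rational surface. The final and most delicate step is to show that $f$ descends to an automorphism $f_1 : X_1 \to X_1$ realizing $\omega_1$, and that the four points form a genuine period-$4$ cycle. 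Because $f$ permutes the four $(-1)$-curves cyclically, the contracted locus is $f$-invariant, so $f$ descends through $\beta$ to a biregular map $f_1$ on $X_1$ that cyclically permutes the four points $q_{11} \to q_{12} \to q_{13} \to q_{14} \to q_{11}$, giving the period-$4$ cycle.

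To identify $f_1^*$ with $\omega_1$, I would pass to the orthogonal complement of the span of $e_{11},\dots,e_{14}$ inside $\mathrm{Pic}(X)$, which is naturally identified with $\mathrm{Pic}(X_1) \cong \mathbb{Z}^{1,10}$ via $\beta^*$, and observe from \eqref{E:nAction} that $\omega$ restricted to this sublattice (spanned by $e_0, e_1, \dots, e_{10}$) acts exactly as $\omega_1 = \vs_0\vs_1\cdots\vs_9$, since the first three lines of \eqref{E:nAction} involve only $\ve_0, \dots, \ve_{10}$ and are unchanged under the contraction. Thus $f_1^* = \omega_1 \in W_{10}$ under the geometric basis of $X_1$ induced by $\beta$. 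The main obstacle I anticipate is the passage from the lattice-level cyclic permutation of classes to the geometric statement that each $e_i$ is represented by an honest irreducible $(-1)$-curve: a priori a class with $e_i^2 = -1$ and $e_i \cdot K = -1$ need not contain an irreducible effective representative. I would resolve this by using that $f$ is a genuine automorphism together with the structure of the blowup $\pi : X \to \mathbf{P}^2$ — the classes $e_{11},\dots,e_{14}$ arise as (total transforms of) exceptional divisors, so their effectivity and the irreducibility of a $(-1)$-representative follow from tracking the blowup data, after which the $f$-invariance of the cyclically permuted configuration guarantees that all four are simultaneously contractible.
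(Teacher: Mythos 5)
Your overall architecture is sound, and you in fact supply more detail than the paper does on the points it leaves implicit: the descent of $f$ through the contraction $\beta$, the resulting period-$4$ cycle $q_{11}\to q_{12}\to q_{13}\to q_{14}\to q_{11}$, and the identification of $f_1^*$ with $\omega_1$ on the orthogonal complement $\langle e_0,\dots,e_{10}\rangle\cong\beta^*Pic(X_1)$. The genuine gap is exactly at the step you flag as ``the main obstacle'': showing that each class $e_i$, $i=11,\dots,14$, is represented by an \emph{irreducible} $(-1)$-curve, so that the four curves are pairwise disjoint and contract to four \emph{distinct} points. Your proposed resolution --- that the $e_i$ ``arise as (total transforms of) exceptional divisors, so their effectivity and the irreducibility of a $(-1)$-representative follow from tracking the blowup data'' --- does not close this. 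A geometric basis realizing $\omega=f^*$ could a priori involve infinitely near centers: if, say, $x_{12}$ lies on the exceptional curve over $x_{11}$, then the total transform $\mathcal{E}_{11}$ is reducible and its strict transform has self-intersection $-2$, while the classes still satisfy $e_i^2=-1$ and $e_i\cdot e_j=0$; in that situation the conclusion ``blow down to $4$ distinct points'' is false for that basis. Your adjunction computation only shows that an irreducible representative, \emph{if one exists}, is a smooth rational $(-1)$-curve, and your observation that irreducibility propagates around the $4$-cycle via $f$ still lacks a base case.

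The paper closes this hole with an input your proposal never uses: from the first line of \eqref{E:nAction} one reads off that $f$ covers a \emph{quadratic} birational map of $\mathbf{P}^2$, and then \cite[Theorem~5.2]{dolgachev2008reflection} is invoked to replace the given geometric basis by the adapted basis of Section~\ref{SS:basis}, in which every basis element other than $e_0$ is the class of a $(-1)$-curve; the blow-down to four distinct points is then immediate. If you prefer not to cite that normalization, you would instead need a direct argument ruling out infinitely near centers among $x_{11},\dots,x_{14}$ --- for instance, using that each $e_i$ has a unique effective representative $D_i$ (Riemann--Roch plus $e_i^2<0$), that $f$ permutes $D_{11},\dots,D_{14}$ and hence preserves their number of irreducible components, and that the infinitely-near relation is a strict partial order so the four divisors cannot all be reducible --- but that argument, with the complication that extra components may involve indices outside $\{11,\dots,14\}$, is precisely the work your sketch omits.
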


\begin{proof} The identification between $f^*$ and $\omega$ comes from a choice of a geometric basis. Since $f$ is quadratic, due to \cite[Theorem~5.2]{dolgachev2008reflection} we may choose the geometric basis as defined in Section \ref{SS:basis}. Except for the class of generic line, all other geometric basis elements correspond to $(-1)$ curves. Thus, we can blowdown four $(-1)$ curves $\mathcal{E}_i, i=11, \dots, 14$ to $4$ distinct points. 
\end{proof}

\begin{lem}
If a rational surface automorphism $f_1: X_1 \to X_1$ realizes $\omega_1 \in W_{10}$, then $f_1$ does not have a (non-fixed, no period 2) period $4$ cycle.
\end{lem}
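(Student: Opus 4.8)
The plan is to exploit the very specific structure of $\omega_1 = \vs_0 \vs_1 \cdots \vs_9 \in W_{10}$, whose action on $\mathbb{Z}^{1,10}$ is recorded in the first four lines of \eqref{E:nAction} (ignoring $\ve_{11},\dots,\ve_{14}$): it is the Coxeter element realized by a quadratic map with orbit data coming from a single long orbit $\ve_3 \mapsto \ve_4 \mapsto \cdots \mapsto \ve_{10}$ together with the two short orbits on $\ve_1,\ve_2$. Such an automorphism $f_1$ is the lift of a quadratic birational map $\check f_1 : \mathbf{P}^2 \dashrightarrow \mathbf{P}^2$ whose three exceptional lines $E_i^+$ collapse to indeterminacy points along orbits of the prescribed lengths. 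The key observation is that the spectral radius of $\omega_1$ equals Lehmer's number $\lambda_\star$, so by Theorem~\ref{C:smallestSalem} the map $\check f_1$ has dynamical degree exactly $\lambda_\star > 1$, hence positive entropy, and I can read off precisely which points of $\mathbf{P}^2$ are blown up and in what cyclic pattern.

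First I would pass from the surface automorphism $f_1$ back down to the birational map $\check f_1$ on $\mathbf{P}^2$ and identify its combinatorial data from \eqref{E:nAction}. A \emph{period $4$ cycle of $(-1)$-curves} on $X_1$ that does not descend from a fixed point or a period~$2$ point would force $\check f_1$ to have a genuine periodic orbit of exact period $4$ among its base points or among distinguished invariant points in $\mathbf{P}^2$. Concretely, I would track where the fixed locus of $\check f_1$ lives: a quadratic Cremona map has a small, explicitly computable set of fixed and low-period points, governed by the indeterminacy data $p_i^+, p_i^-$ and the exceptional lines. The strategy is to show that every periodic point of $\check f_1$ arising in this configuration has period $1$ or $2$, so a period~$4$ cycle is impossible.

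The main technical step is a fixed-point / periodic-point count for the quadratic map. I would use the fact that $\check f_1$ restricted to an invariant anticanonical curve (here built from lines) acts in a controlled way: on each invariant line or on the cuspidal/degenerate anticanonical cubic, the dynamics is a Möbius-type or affine map whose periodic points can be enumerated directly, and I would combine this with a global Lefschetz/holomorphic fixed-point count $\sum (-1)^i \operatorname{tr}((f_1^4)^* | H^i)$ to bound the number of isolated period-$4$ points. Since $(f_1^*)^4$ acts on $H^2 = Pic(X_1)\otimes\mathbf{R}$ with trace determined by $\omega_1^4$ from \eqref{E:nAction}, the holomorphic Lefschetz number is pinned down, and I would argue that all contributions come from the (at most two) genuinely periodic points of low period together with the invariant curve, leaving no room for a free period-$4$ orbit.

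The hard part will be showing that the period-$4$ fixed-point contribution is fully accounted for by fixed and period-$2$ data, i.e. ruling out a ``hidden'' period-$4$ orbit sitting off the invariant curve. The orbit data of $\omega_1$ (the orbit length $8$ on the $\ve_3,\dots,\ve_{10}$ chain versus the divisibility one would need for a $4$-cycle) is the crucial obstruction: a period~$4$ cycle of exceptional curves would have to be compatible with how $\omega_1$ permutes the basis classes, and $\omega_1$ simply has no invariant set of four classes cyclically permuted with the required intersection pattern. I expect the cleanest route is therefore purely lattice-theoretic — examine the permutation action of $\omega_1$ on the classes of $(-1)$-curves and show there is no orbit of exact size $4$ among classes that could be simultaneously effective $(-1)$-curves — rather than analytic, with the Lefschetz count serving as a consistency check. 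Translating ``no period-$4$ cycle of points'' into ``no admissible period-$4$ orbit of $(-1)$-classes under $\omega_1$'' is the step requiring the most care, since one must exclude configurations involving infinitely near points as well.
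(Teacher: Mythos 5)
There is a genuine gap. The statement you need to rule out is a period-$4$ cycle of \emph{points} of $X_1$ (these are the images of the four blown-down $(-1)$-curves from the preceding lemma); such points are not centers of blowups of $X_1$ and carry no classes in $Pic(X_1)$, so your proposed ``cleanest route'' --- excluding a size-$4$ orbit of $(-1)$-classes under the permutation action of $\omega_1$ --- cannot prove the lemma: $\omega_1$ places no a priori constraint on the orbit structure of points that are invisible to the Picard lattice. Likewise, you cannot assume an invariant (anti)canonical cubic or an explicit normal form for $\check f_1$: the lemma must hold for \emph{every} automorphism realizing $\omega_1$, so the argument has to be intrinsic to $\omega_1$ itself.

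You do mention the right tool --- the topological Lefschetz fixed point formula --- but you demote it to a consistency check and never identify the computation that makes it decisive. The paper's argument is: (i) any curve of fixed points of $f_1^4$ has class fixed by $\omega_1^4$; since the characteristic polynomial of $\omega_1$ is $(t-1)$ times the irreducible Lehmer polynomial, none of whose roots is a root of unity, the $1$-eigenspace of $\omega_1^4$ equals that of $\omega_1$, so any such curve is already fixed by $f_1$ and all non-fixed periodic points are isolated; (ii) for a rational surface $L(f_1^n) = 2 + \operatorname{Tr}\omega_1^n$, and the key identity $\operatorname{Tr}\omega_1^2 = \operatorname{Tr}\omega_1^4$ forces $L(f_1^2) = L(f_1^4)$; since $\operatorname{Fix}(f_1^2) \subseteq \operatorname{Fix}(f_1^4)$ and the local indices of a holomorphic map at isolated fixed points are positive, there is no room for points of exact period $4$. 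Your sketch contains neither step (i) (needed so the Lefschetz number actually counts points) nor the trace identity in step (ii), and the lattice-theoretic alternative you favor does not close the gap.
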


\begin{proof}
Suppose there is a curve $C$ of fixed points for $f^n_1$ for some $n\ge 2$. The characteristic polynomial of $\omega_1$ is the product of $(t-1)$ and the irreducible Lehmer's Polynomial. Since no root of Lehmer's Polynomial is a root of unity and the $[C]$ an eigenvector of $\omega_1^4$ with the eigenvalue $1$, it follows that the curve $C$ is, in fact, fixed. Thus all periodic (non-fixed) points of $f_1$ are isolated. Since we have $(f_1^*) ^n = (f_1^n)^*$ and $\text{Tr}\, \omega_1^2 = \text{Tr}\, \omega_1^4$ , from Lefschetz's fixed point formula we see that there is no period $4$ cycle. \end{proof}

\begin{proof}[Proof of Theorem C]
Theorem C is the immediate consequence of the previous two Lemmas.
\end{proof}

McMullen in \cite{McMullen:2007} pointed out that $\vs_0 \in W_8$ can not be realized with a rational surface automorphism $f:X\to X$ if $X$ is a blowup of $\mathbf{P}^2$ along a set of $8$ distinct points in $\mathbf{P}^2$. On the other hand, one can perform iterated blowups over the fixed points. Let $X_1$ be the blowup of $\mathbf{P}^2$ along the set of four points $x_1=[1:0:0], x_2=[0:1:0],x_3=[0:0:1]$ and $x_4=[1:1:1]$ and let $e_i$ be the class of exceptional curves over $x_i$. Then the Cremona involution $J_3$ lifts to an automorphism $f_1:X_1 \to X_1$ with $f_1^* = \vs_0 \in W_4$, that is $f_1^* e_4= e_4$. 
Since the exceptional curve over $x_4$ is an invariant curve for $f_1$, there is a fixed point $x_5$ in the exceptional curve. Thus $f_1$ lift to another automorphism on a rational surface realizing $\vs_0 \in W_5$. By repeating this procedure, we can realize $\vs_0 \in W_n$ for all $n \ge 4$.

\section{Salem Numbers}\label{S:salem}
A Salem number $\tau$ is an algebraic integer such that the minimal polynomial is a reciprocal polynomial of degree $\ge 4$ such that there are exactly two real roots outside the unit circle and all other roots are (non-real) complex with modulus $1$. If an element $\omega \in W_n$ of a Coxeter group has the spectral radius $>1$, then the spectral radius is a Salem number. Since the induced action $f^*|_{Pic(X)}$ of a rational surface automorphism $f:X \to X$ can be identified with an element in a Coxeter group, Salem numbers play an essential role. Blanc and Cantat \cite{blancdynamical} showed that a birational map $\check f$ on $\mathbf{P}^2$ lifts to a rational surface automorphism with positive entropy if and only if the dynamical degree $\delta(\check f)$ is a Salem number. 
 In this section, we present the useful results on Salem numbers. The following results of Salem are discussed and proved in the survey paper by Smyth \cite{Smyth:2015}. 

\begin{prop}\cite{Salem:1945} \label{P:Salem1}
If $\tau$ is a Salem number of degree $d$, then so is $\tau^n$ for all $ n \in \mathbb{N}$. 
\end{prop}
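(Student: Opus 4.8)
The plan is to follow the Galois conjugates of $\tau$ through the $n$-th power map and to check the three defining properties of a Salem number for $\tau^n$. I would label the conjugates of $\tau$ as $\tau_1 = \tau > 1$, $\tau_2 = 1/\tau \in (0,1)$, and $\tau_3,\dots,\tau_d$ lying on the unit circle, where $d = \deg \tau \ge 4$; these are the roots of the irreducible reciprocal minimal polynomial $P$ of $\tau$. Since $\tau$ is an algebraic integer, so is $\tau^n$, and every conjugate of $\tau^n$ lies in the set $\{\tau_1^n,\dots,\tau_d^n\}$. It then remains to verify: (i) $\tau^n$ still has degree $d$; (ii) its minimal polynomial is again reciprocal; and (iii) exactly two of its conjugates lie off the unit circle, with the remaining $d-2$ non-real of modulus one.

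I expect step (i) to be the crux. Passing to the Galois closure $L \supseteq \mathbb{Q}(\tau)$ with group $G$, it suffices to prove $\mathrm{Stab}_G(\tau^n) = \mathrm{Stab}_G(\tau)$, that is, that $g\tau = \zeta\tau$ with $\zeta^n = 1$ forces $\zeta = 1$. The key observation is that $g\tau$ is again a conjugate of $\tau$, so $|g\tau| = |\zeta|\,|\tau| = \tau > 1$; but among $\tau_1,\dots,\tau_d$ only $\tau_1 = \tau$ has modulus exceeding one, which forces $g\tau = \tau$ and hence $\zeta = 1$. This gives $\mathbb{Q}(\tau^n) = \mathbb{Q}(\tau)$, shows the powers $\tau_1^n,\dots,\tau_d^n$ are pairwise distinct, and identifies the minimal polynomial of $\tau^n$ as exactly $\prod_{i=1}^d (x - \tau_i^n)$.

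Steps (ii) and (iii) are then formal. For (ii), reciprocity of $P$ means $\{\tau_1,\dots,\tau_d\}$ is stable under $x \mapsto 1/x$, so $\{\tau_1^n,\dots,\tau_d^n\}$ is stable under $y \mapsto 1/y$, whence the degree-$d$ polynomial $\prod_i (x - \tau_i^n)$ is reciprocal. For (iii), the modulus bookkeeping is immediate: $\tau_1^n = \tau^n > 1$ and $\tau_2^n = \tau^{-n} \in (0,1)$ are the only two conjugates off the unit circle, while $|\tau_j^n| = 1$ for $j \ge 3$; each such $\tau_j^n$ is non-real because no $\tau_j$ can be a root of unity (otherwise $P$ would be cyclotomic, contradicting that its root $\tau > 1$ has modulus exceeding one), so $\tau_j^n \ne \pm 1$, and a number of modulus one that is real must be $\pm 1$. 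Assembling (i)--(iii) shows $\tau^n$ is a Salem number of degree $d$. The only genuine obstacle is the uniqueness-of-the-large-conjugate argument underlying (i); once that pins down the stabilizer, the reciprocity and the root distribution follow by inspection.
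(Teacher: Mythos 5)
Your proof is correct and complete. Note, however, that the paper does not actually prove this proposition: it is stated with a citation to Salem's 1945 paper, and the surrounding text defers all proofs to Smyth's survey, so there is no in-paper argument to compare against. Your route is essentially the classical one. You correctly identify the crux --- that the degree cannot drop --- and the stabilizer argument is sound: if $g(\tau^n)=\tau^n$ then $g\tau=\zeta\tau$ with $\zeta^n=1$, so $|g\tau|=\tau>1$, and since $\tau$ is the unique conjugate of modulus greater than one this forces $g\tau=\tau$; hence $\mathbb{Q}(\tau^n)=\mathbb{Q}(\tau)$, the powers $\tau_1^n,\dots,\tau_d^n$ are pairwise distinct, and the minimal polynomial of $\tau^n$ is $\prod_i(x-\tau_i^n)$. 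The remaining bookkeeping (closure of the root set under inversion, hence reciprocity; no conjugate on the unit circle is a root of unity since the minimal polynomial is not cyclotomic, hence $\tau_j^n\ne\pm1$ and is non-real) is handled correctly. If you want to polish it, you could observe that $\prod_i\tau_i^n=1$ (the unimodular roots pair off and $\tau\cdot\tau^{-1}=1$), so the new minimal polynomial has constant term $1$ and is genuinely reciprocal rather than anti-reciprocal; this is a one-line remark, not a gap.
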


The following proposition by Salem gives a specific idea about the number fields containing Salem numbers. The last two statements are specifically useful for our results. 
\begin{prop}\cite{Salem:1945}\label{P:Salem2}
\begin{enumerate}
\item A number field $K$ is of the form $\mathbb{Q}(\tau)$ for some Salem number $\tau$ if and only if $K$ has a totally real subfield $\mathbb{Q}(\alpha)$ of index $2$, and $K= \mathbb{Q}(\tau)$ with $\tau + \tau^{-1} = \alpha$, where $\alpha>2$ is an irrational algebraic integer, all of whose conjugates not equal to $\alpha$ lie in $(-2,2)$.  
\item Suppose that $\tau$ and $\tau'$ are Salem numbers with $\tau' \in \mathbb{Q}(\tau)$. Then $\mathbb{Q}(\tau') = \mathbb{Q}(\tau)$ and, if $\tau'>\tau$, then $\tau'/\tau$ is also a Salem number in $\mathbb{Q}(\tau)$.
\item If $K= \mathbb{Q}(\tau)$ for some Salem number $\tau$, then there is a Salem number $\tau_1 \in K$ such that the set of Salem numbers in $K$ consists of the power of $\tau_1$. 
\end{enumerate}
\end{prop}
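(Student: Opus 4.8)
The plan is to run all three parts off a single device: the field involution $j\colon K\to K$ of $K=\mathbb{Q}(\tau)$ determined by $j(\tau)=\tau^{-1}$. This is well defined because the minimal polynomial of $\tau$ (of even degree $2m$) is reciprocal, so $\tau^{-1}$ is again a conjugate of $\tau$ generating $K$; moreover $j^2=\mathrm{id}$ and the fixed field of $j$ is $\mathbb{Q}(\alpha)$ with $\alpha=\tau+\tau^{-1}$. The conjugates of $\alpha$ are $\tau+\tau^{-1}>2$ together with the numbers $2\cos\theta_k\in(-2,2)$ produced by the unit-circle conjugates $e^{\pm i\theta_k}$ of $\tau$, so $\mathbb{Q}(\alpha)$ is totally real of degree $m$ with $\alpha>2$ and all other conjugates in $(-2,2)$; since $\tau$ satisfies $t^2-\alpha t+1$ over $\mathbb{Q}(\alpha)$ and $K$ is not totally real, $[K:\mathbb{Q}(\alpha)]=2$. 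This is the forward direction of (1). For the converse I would begin with such an $\alpha$, set $\tau=\tfrac12(\alpha+\sqrt{\alpha^2-4})$, and form the reciprocal integer polynomial $P(t)=\prod_i(t^2-\alpha_i t+1)$ over the conjugates $\alpha_i$ of $\alpha$: the factor at $\alpha_i=\alpha$ gives the real pair $\tau>1>\tau^{-1}$, while each factor with $\alpha_i\in(-2,2)$ gives a conjugate pair on the unit circle. Here $t^2-\alpha t+1$ is irreducible over $\mathbb{Q}(\alpha)$ because $\alpha^2-4$ is negative at some embedding and so is not a square in the totally real field $\mathbb{Q}(\alpha)$; hence $\tau$ has degree $2m$ and $P$ exhibits it as a Salem number.

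The engine for (2) and (3) is the identity $\overline{\sigma(x)}=\sigma(j(x))$ for every complex embedding $\sigma\colon K\to\mathbb{C}$ and every $x\in K$: both sides are ring homomorphisms $K\to\mathbb{C}$ that agree on the generator $\tau$, since $\sigma$ sends $\tau$ to a unit-circle conjugate $\zeta$ with $\overline{\zeta}=\zeta^{-1}=\sigma(\tau^{-1})$. From this I read off the characterization that \emph{$u\in K$ is a Salem number iff $u$ is a unit with $u>1$ and $j(u)=u^{-1}$}: under these hypotheses the two real embeddings of $K$ send $u$ to $u$ and $u^{-1}$, every complex embedding $\sigma$ satisfies $|\sigma(u)|^2=\sigma(u)\,\sigma(j(u))=1$, and no such value can equal $\pm1$ (else $u$ would be rational), so the minimal polynomial of $u$ is reciprocal with exactly the two real roots $u,u^{-1}$ off the unit circle and all remaining roots non-real of modulus $1$; the existence of complex places of $K$ forces even degree $\ge 4$.

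For (2), let $\tau'\in K$ be a Salem number. First $j(\tau')=(\tau')^{-1}$, because $j(\tau')$ is a real conjugate of $\tau'$, hence $\tau'$ or $(\tau')^{-1}$, and it cannot be $\tau'$ (that would put $\tau'$ in the totally real field $\mathbb{Q}(\alpha)$, contradicting that $\mathbb{Q}(\tau')$ has complex places). To get $\mathbb{Q}(\tau')=K$, write $\tau'=R(\tau)$ with $R\in\mathbb{Q}[t]$ and count the conjugates $\zeta$ of $\tau$ with $R(\zeta)=\tau'$; this count equals $[K:\mathbb{Q}(\tau')]$. If a unit-circle conjugate $\beta$ had $R(\beta)=\tau'$, then $R(\overline{\beta})=\overline{R(\beta)}=\tau'$ (as $\tau'$ is real) while $R(\overline{\beta})=R(\beta^{-1})=R(\beta)^{-1}=(\tau')^{-1}$ (using $j(\tau')=(\tau')^{-1}$), giving $\tau'=(\tau')^{-1}$, absurd; and $R(\tau^{-1})=(\tau')^{-1}\ne\tau'$, so only $\zeta=\tau$ qualifies and $[K:\mathbb{Q}(\tau')]=1$. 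Finally, if $\tau'>\tau$ then $\tau'/\tau\in K$ is a unit, exceeds $1$, and satisfies $j(\tau'/\tau)=(\tau'/\tau)^{-1}$, so the characterization makes it a Salem number in $K$.

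For (3), the characterization shows the set $S$ of Salem numbers in $K$ is closed under multiplication and that $\tau'/\tau''\in S\cup\{1\}$ whenever $\tau'\ge\tau''$ in $S$. Since every member of $S$ is an algebraic integer of the fixed degree $2m$ all of whose conjugates have modulus at most its own value, Northcott's finiteness theorem gives that $S\cap(1,\tau]$ is finite; hence $\tau_1:=\min S$ exists and $\tau_1>1$. For any $\tau'\in S$, choosing $n$ with $\tau_1^{\,n}\le\tau'<\tau_1^{\,n+1}$ yields $\tau'/\tau_1^{\,n}\in[1,\tau_1)\cap(S\cup\{1\})$, which by minimality of $\tau_1$ must equal $1$, so $\tau'=\tau_1^{\,n}$; conversely each $\tau_1^{\,n}$ is a Salem number by Proposition \ref{P:Salem1}. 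Thus the Salem numbers in $K$ are exactly the positive powers of $\tau_1$. The main obstacle is the field equality in (2): degree or signature bookkeeping alone only yields that $[K:\mathbb{Q}(\tau')]$ is odd, and it is the interaction of the reciprocal involution $j$ with complex conjugation—packaged in $\overline{\sigma(x)}=\sigma(j(x))$—that collapses it to $1$. Establishing that identity, and invoking Northcott to extract a minimal Salem number in (3), are the two points needing genuine care.
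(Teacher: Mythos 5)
The paper does not prove this proposition at all: it is quoted from Salem's 1945 paper, with the text pointing to Smyth's survey for proofs, so there is no in-paper argument to compare against. Your blind proof is, as far as I can check, correct and self-contained, and it follows the standard route: everything is driven by the reciprocal involution $j(\tau)=\tau^{-1}$ of $K=\mathbb{Q}(\tau)$, the compatibility $\overline{\sigma(x)}=\sigma(j(x))$ for the non-real embeddings $\sigma$, the resulting characterization of Salem numbers in $K$ as units $u>1$ with $j(u)=u^{-1}$, and Northcott's theorem to extract the minimal Salem number $\tau_1$ in part (3). The two delicate points are handled properly: the embedding count $\#\{\zeta: R(\zeta)=\tau'\}=[K:\mathbb{Q}(\tau')]$ together with the conjugation identity rules out unit-circle conjugates and the second real conjugate, giving $\mathbb{Q}(\tau')=K$ in (2); and in the converse of (1) the irreducibility of $t^2-\alpha t+1$ over the totally real field $\mathbb{Q}(\alpha)$ is correctly reduced to $\alpha^2-4$ being negative at some real place. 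Two small points worth making explicit if this were written up: in the characterization, the exclusion $\sigma(u)\ne\pm1$ should be phrased as ``otherwise the irreducible minimal polynomial of $u$ would be $t\mp1$, contradicting $u>1$''; and in (3) one should note that $S$ is nonempty and bounded below by $1$ before invoking Northcott on $S\cap(1,\tau]$ to get a minimum (which you essentially do). Since every element of $S$ has exact degree $2m$ by part (2), Northcott applies as claimed.
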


I would like to thank Eko Hironaka for helping me with the following proposition. 
\begin{prop}\label{P:prodSalem}
Suppose $\alpha \ne \beta$ are two distinct Salem numbers. Then one and only one of the the followings occurs
\begin{enumerate}
\item there is a Salem number $\tau$ such that $\alpha = \tau^m$ and $\beta = \tau^n$ for some $m,n \ge 1$
\item the products of $\alpha, \beta$ and their reciprocals, ($\alpha\beta, \alpha/\beta, \beta/\alpha,$ and $ 1/(\alpha\beta)$) are not Salem numbers.
\end{enumerate}
\end{prop}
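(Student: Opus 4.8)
The plan is to reduce everything to Proposition \ref{P:Salem2}(3): inside any number field generated by a Salem number, the Salem numbers are exactly the positive powers of a single fundamental one. First I would dispose of ``one and only one.'' The two alternatives are mutually exclusive: if (1) holds then $\alpha\beta=\tau^{m+n}$ is a Salem number by Proposition \ref{P:Salem1}, so (2) fails, while (2) forbids $\alpha\beta$ from being a Salem number and hence rules out (1). So it suffices to show that whenever one of $\alpha\beta,\alpha/\beta,\beta/\alpha,1/(\alpha\beta)$ is a Salem number, alternative (1) holds. Since $\alpha,\beta>1$ force $1/(\alpha\beta)<1$, that candidate is never a Salem number, and $\alpha/\beta,\beta/\alpha$ are reciprocals so at most the larger one survives; thus only two cases remain, ``$\alpha\beta$ is a Salem number'' and ``$\alpha/\beta$ is a Salem number with $\alpha>\beta$.''

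The technical core is the claim: if $\mu\neq\nu$ are Salem numbers whose product $\mu\nu$ is again a Salem number, then $\mathbb{Q}(\mu)=\mathbb{Q}(\nu)$. To prove it I would pass to a Galois extension $L/\mathbb{Q}$ with group $G$ containing $\mu,\nu$, and record for each $g\in G$ the logarithmic absolute values $a_g=\log|g(\mu)|$ and $b_g=\log|g(\nu)|$. Because $\mu$ is a Salem number, $a_g=\log\mu$ exactly when $g(\mu)=\mu$ (the unique conjugate of modulus $\mu$), $a_g=-\log\mu$ when $g(\mu)=\mu^{-1}$, and $a_g=0$ otherwise; the same trichotomy holds for $b_g$ with $\log\nu$. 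Applying it also to the Salem number $\mu\nu$ and using $g(\mu\nu)=g(\mu)g(\nu)$ shows $a_g+b_g\in\{0,\pm(\log\mu+\log\nu)\}$ for every $g$. A short pointwise check, using $\log\mu,\log\nu>0$ and $\log\mu\neq\log\nu$, rules out every sign combination except $(\mathrm{sign}\,a_g,\mathrm{sign}\,b_g)\in\{(+,+),(-,-),(0,0)\}$, so $\mathrm{sign}\,a_g=\mathrm{sign}\,b_g$ for all $g$. In particular $\{g:g(\mu)=\mu\}=\{g:g(\nu)=\nu\}$, i.e.\ $\mathrm{Gal}(L/\mathbb{Q}(\mu))=\mathrm{Gal}(L/\mathbb{Q}(\nu))$, and the Galois correspondence yields $\mathbb{Q}(\mu)=\mathbb{Q}(\nu)$.

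Granting the claim, the case $\alpha\beta$ a Salem number is immediate with $(\mu,\nu)=(\alpha,\beta)$. For the case $\gamma:=\alpha/\beta$ a Salem number (with $\alpha>\beta$) I would instead apply the claim to the pair $(\gamma,\beta)$, whose product $\gamma\beta=\alpha$ is a Salem number: if $\gamma=\beta$ then $\alpha=\beta^2$ and $\mathbb{Q}(\alpha)=\mathbb{Q}(\beta)$ trivially, while if $\gamma\neq\beta$ the claim gives $\mathbb{Q}(\gamma)=\mathbb{Q}(\beta)$, whence $\alpha=\gamma\beta\in\mathbb{Q}(\beta)$ and Proposition \ref{P:Salem2}(2) upgrades this to $\mathbb{Q}(\alpha)=\mathbb{Q}(\beta)$. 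In every surviving case we reach $K:=\mathbb{Q}(\alpha)=\mathbb{Q}(\beta)$, a field generated by a Salem number; Proposition \ref{P:Salem2}(3) then supplies a Salem number $\tau\in K$ whose powers exhaust the Salem numbers of $K$, so $\alpha=\tau^m$ and $\beta=\tau^n$, which is alternative (1).

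I expect the main obstacle to be the technical core, specifically making rigorous the passage from the equality of sign patterns $\mathrm{sign}\,a_g=\mathrm{sign}\,b_g$ to the equality of stabilizers $\mathrm{Gal}(L/\mathbb{Q}(\mu))=\mathrm{Gal}(L/\mathbb{Q}(\nu))$, which is exactly where the Salem structure (a single conjugate of modulus $>1$, its reciprocal, and all others on the unit circle) is used decisively. By contrast the reciprocal-to-product reduction and the final appeal to Proposition \ref{P:Salem2} are routine.
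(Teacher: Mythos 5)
Your argument is correct, and it takes a genuinely different route from the paper's. The paper works contrapositively: assuming no common Salem number $\tau$ exists, it lets $K$ be the splitting field of $\alpha$, asserts $\beta\notin K$, and uses normality of $N=\mathrm{Gal}(L/K)$ in $\mathrm{Gal}(L/\mathbb{Q})$ to produce a single automorphism $g$ fixing every conjugate of $\alpha$ while moving $\beta$; then $g(\alpha\beta)=\alpha\, g(\beta)$ is exhibited as a forbidden conjugate of $\alpha\beta$ (either non-real of modulus $>1$, or a third real conjugate), and the remaining three products are dispatched ``similarly,'' with Proposition \ref{P:Salem2} never actually invoked. You instead prove the positive direction: from ``$\mu\nu$ is Salem'' you run a sign analysis of $\log|g(\mu)|$ and $\log|g(\nu)|$ over \emph{all} $g$ at once, conclude that the stabilizers of $\mu$ and $\nu$ coincide, hence $\mathbb{Q}(\mu)=\mathbb{Q}(\nu)$, and then close with Salem's structure theorem (Proposition \ref{P:Salem2}(2),(3)), handling the $\alpha/\beta$ case by the clean reduction to the pair $(\alpha/\beta,\beta)$. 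Your route buys two things: it only needs equality of the fields $\mathbb{Q}(\mu)$ and $\mathbb{Q}(\nu)$, sidestepping the paper's stronger claim that $\beta$ avoids the entire splitting field $K$ of $\alpha$ (a step the paper leaves unjustified, since the hypothesis directly yields only $\beta\notin\mathbb{Q}(\alpha)$), and it makes both the mutual exclusivity and the reciprocal cases fully explicit. The paper's route, in exchange, is shorter and more self-contained, producing one explicit bad conjugate rather than classifying the behavior of every Galois element and then appealing to the classification of Salem numbers in a fixed field.
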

\begin{proof}
Suppose there is no Salem number $\tau$ such that $\alpha = \tau^m$ and $\beta = \tau^n$ for some $m,n \ge 1$. Let $K$ be the splitting field of $\alpha$ over $\mathbb{Q}$ and let $H = \text{Gal}(K/\mathbb{Q})$ be the Galois group of $K$ over $\mathbb{Q}$. Then $\beta \not\in K$. Now consider the smallest Galois extension $L$ over $\mathbb{Q}$ which contains both Salem numbers $\alpha$ and $\beta$ and let $G = \text{Gal}(L/\mathbb{Q}$). Since $K \subset L$ is a splitting field, by the fundamental Theorem of Galois theory, we have a normal subgroup $N \lhd G$ such that \[ N = \{ g \in G: g (a) = a , \forall a \in K \}.\] Thus every element in $N$ fixes all Galois conjugate of $\alpha$ including $\alpha$ itself. Since $\beta \not\in K$, there is $g \in N \lhd G$ such that $g(\beta) \ne \beta$, in other words, $g(\beta)$ is a Galois conjugate of $\beta$. Thus we have either $g(\beta) = 1/\beta$ or $|g(\beta)|=1$. Since $g \in N \lhd G$, we have $ g(\alpha\beta) = \alpha g(\beta)$. If $|g(\beta)|=1$, then $g(\alpha\beta) $ is a non-real complex number outside the unit circle and thus $\alpha\beta$ has a Galois conjugate outside the unit circle. If $g(\beta) = 1/\beta$, then $\alpha\beta$ is conjugate to a real number $\alpha/\beta \ne 1/(\alpha\beta)$. In either case, we see that $\alpha\beta$ is not Salem. Similarly we can conclude that $\alpha/\beta, \beta/\alpha,$ and $ 1/(\alpha\beta)$ are not Salem numbers.
\end{proof}

\section{Quadratic rational surface automorphisms}\label{S:qauto}
Searching for basic birational maps on $\mathbf{P}^2$ with given orbit data is a challenging problem since it involves solving systems of high-degree algebraic equations with a large number of variables. With the extra condition of the existence of an invariant cubic, McMullen \cite{McMullen:2007} and Diller \cite{Diller:2011} provide recipes to construct birational maps which lift to rational surface automorphisms. For a given orbit data with a specified invariant cubic, Diller's Method gives an explicit formula for the corresponding birational map. This method also works for the higher dimension \cite{Bedford-Diller-K}. In this article, we are considering basic quadratic birational maps fixing three lines joining at a single point with orbit data
\begin{equation}\label{E:555}
 n_1= n_2 = n_3 = n\ge 4, \qquad \text{and } \ \ \sigma \in S_3
\end{equation}

\subsection{Quadratic birational map fixing three concurrent lines}
Let $L_j =\{ \gamma_i (t) : t \in \mathbb{C} \cup \{ \infty\} \} \subset \mathbf{P}^2$ be the line given by the parametrizations 
\begin{equation}\label{E:param}
\gamma_1(t) = [-t:1:1],\quad \gamma_2(t) = [t:1:0],\quad \text{and} \quad \gamma_3(t) = [t:0:1] 
\end{equation}
Let $C=L_1 \cup L_2 \cup L_3$ be a reducible cubic consisting of three lines joining at a single point $[1:0:0]$. Let us fix affine coordinates $(t, i) = \gamma_i(t) $ for a point in the cubic $C$. 
\begin{equation}\label{E:cubic}
C= \{ (t,i) := \gamma_i(t) | t \in \mathbb{C} \cup \{ \infty\}, i=1,2,3\}
\end{equation}
The given parametrization in (\ref{E:param}) ensures that if three points on the cubic, $p_1 = (t_1,1), p_2= (t_2,2),$ and $p_3=(t_3,3)$ are co-linear than $t_1+t_2+t_3 = 0$. We say a birational map $\check f$ on $\mathbf{P}^2$ properly fixes $C$ in (\ref{E:cubic}) if $\check f(C) = C$ and none of the points of indeterminacy $p_i^\pm$ are the common intersection point $C_{sing}$ of $C$.

Suppose $\check f: \mathbf{P}^2 \dasharrow \mathbf{P}^2$ be a quadratic map that properly fixes $C$. For a given irreducible curve $V \subset \mathbf{P}^2$ of degree $k$, the degree of the image $\check f(V)$ is $2 k - (m_1+m_2+m_3)$ where $m_i$ is the vanishing order of the point of indeterminacy $p_i^+$ in $V$. Since the invariant cubic $C$ consists of three lines, each irreducible component $L_i$ should contain exactly one point of indeterminacy. It follows that there are three distinct points of indeterminacy, and thus we have
\begin{lem}\label{L:basic}
If a quadratic birational map $\check f: \mathbf{P}^2 \dasharrow \mathbf{P}^2$ properly fixes a concurrent three lines $C$, then $\check f$ is basic.  
\end{lem}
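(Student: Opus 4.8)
The plan is to show that $\check f$ has exactly three distinct (proper) points of indeterminacy in $\mathbf{P}^2$; by the classification of quadratic Cremona maps recalled above, a quadratic birational map is basic precisely when its indeterminacy locus consists of three points, the maps built from $J_2$ and $J_1$ having two and one point of indeterminacy respectively.

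First I would record that $\check f$ permutes the three components of $C$. Since $\check f$ is birational and $\check f(C)=C$ as strict transforms, $\check f$ restricts to a birational self-map of the cubic $C$ and hence carries each irreducible component $L_i$ to an irreducible component of $C$. No $L_i$ can be contracted: if some $\check f(L_i)$ were a point, then $\check f(C)$ would be the union of that point with at most two irreducible curves, which cannot equal the three distinct lines making up $C$. Thus each $\check f(L_i)=L_{\pi(i)}$ is again a line, i.e. a curve of degree $1$.

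Next I would run the degree count. Applying the formula $\deg \check f(V)=2k-(m_1+m_2+m_3)$ to $V=L_i$, where $k=\deg L_i=1$ and $m_j$ is the vanishing order of the indeterminacy point $p_j^+$ on $L_i$, the equality $\deg\check f(L_i)=1$ forces $m_1+m_2+m_3=1$. Hence exactly one point of indeterminacy meets $L_i$, and that point is a genuine point of $\mathbf{P}^2$ rather than an infinitely near one: an infinitely near base point contributes to the multiplicity sum only when $L_i$ also passes through its proper predecessor, which would add a second unit to $m_1+m_2+m_3$. So each $L_i$ carries exactly one proper point of indeterminacy.

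Finally I would assemble these observations. The hypothesis that $\check f$ \emph{properly} fixes $C$ means no point of indeterminacy equals the common point $C_{sing}$, which is the only point lying on more than one of the lines; therefore each indeterminacy point lies on a unique $L_i$, and the three points attached to $L_1,L_2,L_3$ are pairwise distinct. This produces three distinct proper points of indeterminacy, so $\check f$ is of $J_3$-type and hence basic. The delicate step, and the part I would write most carefully, is the exclusion of the degenerate $J_2$ and $J_1$ configurations: one must verify that an infinitely near base point cannot be concealed on one of the lines, and this is exactly where the degree bookkeeping (total multiplicity $1$ along each line, together with $C_{sing}$ being barred as a base point) does the work.
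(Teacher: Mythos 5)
Your proposal is correct and follows essentially the same route as the paper: the degree formula $\deg \check f(L_i)=2-\sum m_j$ forces each line of $C$ to carry total base-point multiplicity one, and the properness hypothesis (no indeterminacy at $C_{sing}$) makes the three resulting points distinct, so $\check f$ must be of $J_3$-type. The paper states this quite tersely; your additional bookkeeping ruling out infinitely near base points on the lines is a finer elaboration of the same argument rather than a different method.
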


We may assume that $ p_i^+ \in L_i \setminus \{C_{sing}\}$ for all $i=1,2,3$.
It follows that $\check f$ permutes three lines. Suppose $\check f (L_i) = L_{\tau(i)}$ for some permutation $\tau \in S_3$. 
Since the exceptional line $E_1^+$ intersects $L_j$ at one point and since for $j=2,3$, the indeterminant point $p_j^+$ is contained in both $E_1^+$ and $L_j$, we have non-indeterminate point $q_1\in L_1 \cap E_1^+$. Similarly we have 
$ L_i \cap E_i^+ = \{ q_i\} \not\in \mathcal{I}(\check f)$ for all $i = 1,2,3$, and $p_i^- = \check f (E_i^+) \in L_{\tau(i)}$ for all $ i=1,2,3.$

 \begin{lem} \label{L:indp}
 Suppose $\check f$ is a basic quadratic map properly fixing $C = L_1 \cup L_2\cup L_3$. And suppose $\check f(L_i) = L_{\tau(i)}$ for some $\tau \in S_3$. Then after renaming it, if necessary, we have 
\[p_i^+, q_i \in L_i , \qquad \text{and } \quad p_i^- \in L_{\tau(i)} \qquad \text{ for all } i =1,2,3 \]
where $q_i$ is the unique non-indeterminate intersection point $q_i \in E^+_i \cap (C \setminus \mathcal{I})$.
 \end{lem}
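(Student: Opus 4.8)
The plan is to track how the basic quadratic map $\check f$ interacts with the three concurrent lines, using the degree-drop formula and the already-established fact (Lemma \ref{L:basic}) that $\check f$ is basic. By Lemma \ref{L:basic} we already know $\check f = T^- \circ J_3 \circ (T^+)^{-1}$ is basic, so it has exactly three distinct exceptional lines $E_1^+, E_2^+, E_3^+$ and three distinct indeterminacy points $p_1^+, p_2^+, p_3^+$, with the incidence $p_j^+ \in E_i^+$ for all $i \ne j$ (this is the standard configuration of $J_3$ pulled back by $T^+$, where the three lines form a triangle whose vertices are the indeterminacy points). First I would record that, as observed in the paragraph preceding the statement, each irreducible component $L_i$ of the invariant cubic contains exactly one indeterminacy point, since the degree-drop formula $\deg \check f(V) = 2k - (m_1+m_2+m_3)$ applied to the preserved cubic forces the three points to be distributed one per line. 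This lets me \emph{rename} the indices so that $p_i^+ \in L_i$ for each $i$, which is exactly the freedom the statement invokes with ``after renaming it, if necessary.''

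Next I would pin down the points $q_i$. The key geometric observation is that the exceptional line $E_i^+$ meets each of the three lines $L_1, L_2, L_3$ (a line meets a line), but two of those intersection points are forced to be indeterminacy points: for $j \ne i$, the point $p_j^+ \in L_j$ also lies on $E_i^+$ by the triangle incidence of the basic configuration, so $E_i^+ \cap L_j = \{p_j^+\}$ for $j \ne i$. That accounts for two of the intersections of $E_i^+$ with the cubic $C$; since $\deg(E_i^+ \cap C) = 3$ set-theoretically counting the line $L_i$, the remaining intersection $E_i^+ \cap L_i$ must be a single point $q_i$ that is \emph{not} an indeterminacy point (it cannot equal $p_i^+$ unless $q_i$ is the common point, which is excluded since $\check f$ properly fixes $C$, keeping all $p_i^+$ away from $C_{sing}$). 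This gives the unique non-indeterminate intersection point $q_i \in E_i^+ \cap (C \setminus \mathcal{I})$ and places it on $L_i$, establishing $q_i \in L_i$.

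Finally I would determine where the images $p_i^- = \check f(E_i^+)$ land. Since $\check f$ properly fixes $C$ and permutes the three lines as $\check f(L_i) = L_{\tau(i)}$, and since $q_i \in L_i \cap E_i^+$ is a non-indeterminate point on $L_i$, its image $\check f(q_i)$ lies on $\check f(L_i) = L_{\tau(i)}$. But $\check f$ contracts the whole exceptional line $E_i^+$ to the single point $p_i^-$, and $q_i \in E_i^+$, so $\check f(q_i) = p_i^-$. Combining these two facts gives $p_i^- = \check f(q_i) \in L_{\tau(i)}$, which is the last assertion. Assembling the three displays — $p_i^+ \in L_i$, $q_i \in L_i$, and $p_i^- \in L_{\tau(i)}$ — completes the proof.

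The main obstacle, and the step deserving the most care, is the counting argument that forces $E_i^+$ to meet $L_i$ in exactly one non-indeterminate point $q_i$: one must verify that the two ``expected'' intersections of $E_i^+$ with $C$ are genuinely absorbed by the indeterminacy points $p_j^+$ ($j \ne i$) lying on $L_j$, and that the triangle incidence of the basic configuration indeed places $p_j^+$ on $E_i^+$ for $j \ne i$. This is where the hypothesis that $\check f$ \emph{properly} fixes $C$ (no $p_i^\pm$ at $C_{sing}$) is essential, since it rules out the degenerate coincidences that would collapse $q_i$ onto an indeterminacy point or onto the common vertex.
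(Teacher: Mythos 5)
Your proposal is correct and follows essentially the same route as the paper, which embeds this exact argument (one indeterminacy point per line via the degree-drop formula, the triangle incidence forcing $p_j^+\in E_i^+$ for $j\ne i$, hence a non-indeterminate $q_i\in E_i^+\cap L_i$ whose image under $\check f|_C$ is $p_i^-$) in the paragraph immediately preceding the lemma rather than in a separate proof environment. The only cosmetic difference is your justification that $q_i\ne p_i^+$: the cleaner reason is simply that $p_i^+\notin E_i^+$ in the basic configuration, rather than an appeal to $C_{sing}$.
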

 
Because of the previous Lemma \ref{L:indp}, we have few restrictions on the orbit data for a basic quadratic map $\check f$ properly fixing $C$. 
\begin{lem}\label{L:orbitdata}
Suppose $\check f$ is a basic quadratic map properly fixing $C = L_1 \cup L_2\cup L_3$ with orbit data $n_1, n_2,n_3, \sigma \in S_3$. Suppose $\check f(L_i) = L_{\tau(i)}$ for some $\tau \in S_3$. If $n_i <\infty$ for all $i$, then we have
\begin{itemize}
\item if $\tau=Id$ is identity, then so is $\sigma$. 
\item if $\tau$ permutes $i$ and $j$, then either(1) both $n_i$ and $n_j$ are odd and $\sigma= \tau$, or (2) both $n_i$ and $n_j$ are even and $\sigma= Id$.
\item if $\tau$ is a cyclic permutation, then either (1) $n_i \equiv 0$ (mod $3$) for all $i$ and $\sigma=Id$, (2) $n_i \equiv 1$ (mod $3$) for all $i$ and $\sigma = \tau$, or (3) $n_i \equiv 2$ (mod $3$) for all $i$ and $\sigma = \tau^2$.
\end{itemize}
\end{lem}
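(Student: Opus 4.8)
The plan is to transport the whole question onto the anticanonical cubic $C$, where $\check f$ acts as a genuine automorphism, and to read off both permutations from the induced action on the smooth locus $C^{*}=C\setminus\{C_{sing}\}$. First I would equip $C^{*}$ with its group law. Since $C$ is three lines meeting at the single ordinary triple point $C_{sing}=[1:0:0]$, each $L_i\setminus\{C_{sing}\}$ is a copy of $\mathbb{C}$ in the affine parameter $t$, and the relation ``$t_1+t_2+t_3=0$ for collinear points'' coming from (\ref{E:param}) exhibits $C^{*}$ as the algebraic group $\mathbb{C}\times\mathbb{Z}/3\mathbb{Z}$, the factor $\mathbb{Z}/3\mathbb{Z}$ recording the component (this matches $p_a(C)=1$, so the generalized Jacobian is one–dimensional). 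Because $\check f$ properly fixes $C$ and $C_{sing}$ is its only singular point, $\check f|_{C}$ is an automorphism of $C$ fixing $C_{sing}$; on each line it is then a M\"obius map fixing $t=\infty$, hence affine, with common multiplier the determinant $\delta$ defined by $\check f^{*}\eta=\delta\,\eta$ for the anticanonical $2$-form $\eta$. Thus $\check f|_{C^{*}}$ is an affine automorphism $z\mapsto\mu(z)+c$ of $\mathbb{C}\times\mathbb{Z}/3\mathbb{Z}$ with linear part $\mu=(\delta,\epsilon)$, $\epsilon\in\{\pm1\}$, and its action on the component index is exactly $\tau$, written as an affine map $x\mapsto\epsilon x+c_1$. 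In particular $\tau=\mathrm{Id}$ forces $\epsilon=1,\ c_1=0$; a transposition (one fixed point) forces $\epsilon=-1$; and a $3$-cycle (fixed-point free) forces $\epsilon=1,\ c_1\neq0$.

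Next I would establish the master relation. Since $\check f$ sends $L_i$ to $L_{\tau(i)}$, the point $\check f^{\,j}(E_i^{+})$ lies on the component $\tau^{j}(i)$; comparing the endpoint $\check f^{\,n_i}(E_i^{+})=p_{\sigma(i)}^{+}\in L_{\sigma(i)}$ with the unique component it actually occupies gives
\[ \sigma(i)=\tau^{n_i}(i)\qquad (i=1,2,3). \]
This settles two of the three cases. If $\tau=\mathrm{Id}$ then $\sigma(i)=i$ for all $i$, so $\sigma=\mathrm{Id}$. If $\tau$ transposes $i,j$ and fixes $k$, then $\sigma(k)=k$ while $\sigma(i),\sigma(j)\in\{i,j\}$ with $\sigma(i)=i$ or $j$ according to the parity of $n_i$; since $\sigma$ is a bijection, $n_i$ and $n_j$ must have the same parity, giving $\sigma=\mathrm{Id}$ when both are even and $\sigma=\tau$ when both are odd.

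The remaining case, $\tau$ a $3$-cycle, is the genuinely harder one, since the master relation alone does not force all $n_i$ into one residue class mod $3$ (combinatorially $\sigma$ could be a transposition). Here I would bring in the geometry of the indeterminacy points. Each exceptional line $E_i^{+}$ passes through the two other indeterminacy points, so $E_i^{+}\cap C=\{p_j^{+},p_k^{+},q_i\}$ is a collinear triple and hence sums to the identity: $q_i=p_i^{+}-S$ with $S=p_1^{+}+p_2^{+}+p_3^{+}$. The analogous collinearity for $\check f^{-1}$ pins down the continuous translation as $c_0=\delta\,A/3$, where $A$ is the continuous coordinate of $S$. Substituting $p_{\sigma(i)}^{+}=\check f^{\,n_i}(q_i)$ into the affine form of $\check f$ then produces a closed system for the positions $p_i^{+}$ in terms of $\delta$ and the residues $n_i\bmod 3$, subject to the non-degeneracy of a quadratic map, namely that the three indeterminacy points be distinct and non-collinear, i.e. $S\neq0$ (equivalently $A\neq0$).

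The plan for this case is to show that the closure system is consistent exactly when all $n_i$ share a single residue $r$ modulo $3$ — equivalently when $\sigma=\tau^{r}\in\langle\tau\rangle$ — reading off $\sigma=\mathrm{Id},\tau,\tau^{2}$ in the three subcases. The mechanism I expect is that a mixed-residue choice (for instance $\sigma$ a transposition while $\tau$ is a $3$-cycle) forces the consistency equation obtained from $A=a_1+a_2+a_3$ to impose a cyclotomic condition on $\delta$, making $\delta$ a root of unity; this is impossible because $\check f$ has positive entropy and $\delta$ is a Galois conjugate of a Salem number, so it is never a root of unity. I expect this consistency analysis — cleanly separating the continuous part of the closure equations from the discrete $\mathbb{Z}/3\mathbb{Z}$-bookkeeping and ruling out the mixed-residue configurations — to be the main obstacle, while the $\tau=\mathrm{Id}$ and transposition cases are essentially formal once the group law and the master relation are in hand.
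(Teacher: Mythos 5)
Your central identity $\sigma(i)=\tau^{n_i}(i)$, obtained by tracking which component of $C$ the orbit of $E_i^+$ sits on, is precisely the paper's entire proof, and your treatment of the first two bullets (identity and transposition $\tau$, using bijectivity of $\sigma$ to force equal parities of $n_i,n_j$) is exactly how those cases follow from it. Where you diverge is the $3$-cycle case, and your observation there is correct and sharper than the paper: the relation $\sigma(i)=\tau^{n_i}(i)$ together with bijectivity of $\sigma$ does \emph{not} force a common residue mod $3$, since a pattern such as $(n_1,n_2,n_3)\equiv(0,1,2)$ yields a legitimate permutation ($\sigma$ a transposition). The paper's proof simply asserts that the statement follows from the master relation and never addresses this configuration; in the paper's actual applications (Theorems \ref{T:exactlysix} and \ref{T:six}) all $n_i$ are equal, so the issue never arises there.

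That said, your proposed repair of the $3$-cycle case is only a sketch and, as formulated, does not close the gap. You would rule out mixed residues by showing the closure equations force the multiplier $\delta$ to be a root of unity and then appealing to $\delta$ being a Galois conjugate of a Salem number; but positive entropy is not a hypothesis of the lemma (a basic quadratic map properly fixing $C$ with all $n_i$ finite lifts to an automorphism whose dynamical degree may a priori equal $1$), so that contradiction is not available in the stated generality, and in any case the ``consistency analysis'' you defer is exactly the content that would need to be carried out: the closure system for mixed residues is, like the equal-residue one, a homogeneous linear system in $(t_1^+,t_2^+,t_3^+,b)$ whose solvability is a polynomial condition on $a$, and nothing in your outline shows its solutions are always degenerate. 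So: for the two easy bullets your argument is complete and identical in substance to the paper's; for the third bullet you have correctly located a point the paper passes over, but you have not actually proved it.
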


\begin{proof}
Notice that for each $i=1,2,3$, we have $q_i, p_i^+ \in L_i$ and $\check f^j(q_i) = f^j(E_i^+)$ for all $j \le n_i$. Since $\check f^j(q_i) \in L_{\tau^j(i)}$ and $\check f^{n_i}(q_i) = p_{\sigma(i)}^+ \in L_{\sigma(i)}$, we have \[ \sigma(i) = \tau^{n_i}(i) \qquad \text{for all } i=1,2,3, \] and thus the statement of this Lemma follows.
\end{proof}

Suppose all orbit lengths of a basic quadratic map are equal to five, $n_1= n_2=n_3 =5$. Then all six permutations in $S_3$ are possible candidates for basic quadratic birational maps properly fixing three concurrent lines. The explicit formulas for the minimal polynomials (up to cyclotomic factors) of the dynamical degree using orbit data are given in \cite{Bedford-Kim:2004}: With orbit lengths $n_1,n_2, n_3 \in \mathbb{N}$, we have 

\begin{itemize}\addtolength{\itemsep}{1ex}
\item a cyclic permutation :\hfill\break \[ \chi(t) \ = \ (t-1) ((t^{n_1}+1)(t^{n_2}+1)(t^{n_3}+1)+1)-(t^{n_1+n_2+n_3}-1),\]
\item a transposition $(i\, j)$ with $\{i,j,k\} = \{1,2,3\}$: \hfill\break \[\chi(t) \ =\ (t-1) (t^{n_k} (t^{n_i}+1)(t^{n_j}+1)- t^{n_i} - t^{n_j} -2)- (t^{n_i+n_j}-1) (t^{n_k}-1),\]
\item an identity permutation : \hfill\break \[ \chi(t) \ = (t-2) t^{n_1+n_2+n_3}+(t^{n_1+n_2}+t^{n_2+n_3}+ t^{n_1+n_3}) - t (t^{n_1} + t^{n_2}+t^{n_3})+ 2 t-1\]
\end{itemize}
With these formulas, we see that all six cases share the same dynamical degree given by the largest real root $\delta$ of a Salem polynomial $\chi(t)$ of degree $4$
\begin{equation}\label{E:ddegree} \chi_5(t) = t^4-2 t^3+t^2-2 t+1, \qquad \text{and } \quad \delta \approx 1.8832 \end{equation}

Furthermore, It is known that the dynamical degree is the largest real root of a Salem polynomial, which is determined by orbit data. Also, it is known  \cite{Bedford-Kim:2004} that if the dynamical degree is bigger than $1$ then the dynamical degree is strictly increasing in orbit lengths. It follows that there are at most six distinct orbit data for quadratic birational maps fixing concurrent lines with the dynamical degree $\delta \approx 1.8832 $ in (\ref{E:ddegree}).

 \subsection{Diller's Method}\label{SS:diller}
 To construct a basic map with the orbit data in (\ref{E:555}) which properly fixes three concurrent lines defined in (\ref{E:cubic}), we apply the construction by Diller in \cite{Diller:2011, Bedford-Diller-K} in which all possible invariant cubics were discussed.
 The original Theorem in \cite{Diller:2011} assumes that a basic birational map properly fixes a cusp cubic. The discussion for a basic birational map properly fixing three concurrent lines is in Section $4$ in \cite{Diller:2011}.
In \cite{Bedford-Diller-K}, the same method is applied to the basic maps in the higher dimensions. This Section only gives the necessary components for constructing a basic birational map fixing three concurrent lines. The following Theorem is slightly modified from the original versions \cite[Theorem~2.6]{Bedford-Diller-K} for the mappings properly fixing pre-determined three concurrent lines. 
 \begin{thm}\cite{Diller:2011,Bedford-Diller-K}\label{T:bdk}
 Let $C$ be the union of three lines joining at the same point defined in (\ref{E:cubic}). Suppose $a \in \mathbf{C}^*$ and $t_j^+ \in \mathbf{C}$, $j=1,2,3$, are distinct paramenters satisfying $t_1^+ + t_2^+ + t_3^+ \ne 0$. Then there is a unique basic birational map $\check f = T^- \circ J_3\circ (T^+)^{-1}$, $b \in \mathbf{C}$, and a permutation $\tau \in S_3$ such that 
 \begin{itemize} 
 \item $\check f$ properly fixes $C$ with $\check f|_C$ is given by $\check f|_C ((t, i) = (a t+b, \tau(i))$
 \item $(t_i^+, i) = T^-(e_i)$
 \item $ b = \frac{1}{3} a \sum t_i^+$
 \end{itemize}
 \end{thm}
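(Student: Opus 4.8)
The plan is to construct the two linear maps $T^\pm$ explicitly and read off $b$ and $\tau$ at the end, so the first step is to turn ``$\check f$ properly fixes $C$'' into a statement about $J_3$ alone. By Lemma \ref{L:indp} the three indeterminacy points of $\check f$ lie one on each component, so prescribing them to be $(t_i^+,i)\in L_i$ is the same as fixing $T^+$ by $T^+e_i=(t_i^+,i)$. Such a $T^+$ is unique modulo right multiplication by a diagonal matrix $D$, and since $J_3$ normalizes the diagonal torus ($J_3\,\mathrm{diag}(d_1,d_2,d_3)\,J_3=\mathrm{diag}(d_2d_3,d_1d_3,d_1d_2)$) this ambiguity is exactly the redundancy in the presentation $T^-\circ J_3\circ(T^+)^{-1}$; hence I may fix one representative $T^+$. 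The relation $\check f(C)=C$ then reads $J_3\big((T^+)^{-1}C\big)=(T^-)^{-1}C$.

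The key geometric fact I would isolate is how $J_3$ moves pencils through the coordinate vertices. A one-line computation shows $J_3$ sends the line $\{\alpha x_2+\beta x_3=0\}$ through $e_1$ to the line $\{\beta x_2+\alpha x_3=0\}$, again through $e_1$, and likewise for $e_2,e_3$. Writing $C^+:=(T^+)^{-1}C$, its three components pass through $e_1,e_2,e_3$ (because $(T^+)^{-1}(t_i^+,i)=e_i$) and meet at $P^+:=(T^+)^{-1}(C_{sing})$; comparing the $3\times3$ determinant that expresses concurrency before and after the coefficient swaps shows the two determinants coincide, so $C^-:=J_3(C^+)$ is again a triple of lines through $e_1,e_2,e_3$, concurrent at $P^-=J_3(P^+)$. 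I then take $T^-$ to carry the tripod $C^-$ back onto $C=L_1\cup L_2\cup L_3$ with $P^-\mapsto C_{sing}$. The remaining freedom in $T^-$ (a group of homologies with centre $C_{sing}$) is used to force the induced map on the parameter to be a single affine map $t\mapsto at+b$ with the prescribed multiplier $a$, uniformly on all three components; this determines the line permutation $\tau$ and, together with the normalization of $T^+$, yields both existence and uniqueness of $\check f$.

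To pin down $b$ I would use only the additive incidence relation of (\ref{E:param}): three points $(t_1,1),(t_2,2),(t_3,3)$, one on each component, are collinear iff $t_1+t_2+t_3=0$. The exceptional line $E_i^+=T^+\{x_i=0\}$ passes through the other two indeterminacy points $p_j^+,p_k^+$ (as $\{x_i=0\}$ contains $e_j,e_k$) and meets $L_i$ in the remaining point $q_i=(s_i,i)$ with $s_i=-(t_j^++t_k^+)$; its image is $p_i^-=\check f(q_i)=(as_i+b,\tau(i))$. Running the identical argument for $\check f^{-1}$, which acts by $t\mapsto(t-b)/a$, relates the image parameters back to the $t_i^+$. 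With $S:=\sum_i t_i^+$, the first relation gives image parameters $v_i=-a(S-t_i^+)+b$, and feeding these through the inverse yields $-(v_j+v_k)=at_i^++b$ for each $i$; summing the relations and simplifying collapses the system to $aS=3b$, i.e. $b=\tfrac13 a\sum_i t_i^+$.

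Finally, the two hypotheses are exactly the non-degeneracy the construction needs. By the same relation the three indeterminacy points are collinear precisely when $t_1^++t_2^++t_3^+=0$, so $\sum t_i^+\neq0$ guarantees they are in general position, making $T^+$ invertible and $\check f$ a genuine basic quadratic map in the sense of Lemma \ref{L:basic}; the non-vanishing of $S$ is also what enters when one checks that the three component multipliers are genuinely forced to the common value $a$. Distinctness of the $t_i^+$ is the remaining genericity ensuring three distinct exceptional lines and a well-defined $\tau$. I expect the main obstacle to be precisely this compatibility at the heart of the argument: that the single scalar $a$ can be realized while the translation stays uniform across all three components, i.e. that the over-determined system for $T^-$ is consistent. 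This consistency is special to three concurrent lines, whose collinearity law is additive, and it is what the symmetry of the $J_3$-pencil computation is ultimately encoding.
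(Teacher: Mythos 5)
First, a point of reference: the paper does not prove Theorem \ref{T:bdk} at all --- it is quoted from \cite{Diller:2011,Bedford-Diller-K} as a ``slightly modified'' form of Theorem~2.6 there --- so your argument can only be measured against the source and against the computations the paper reuses in Section \ref{SS:diller}. Much of your outline is sound: fixing $T^+$ by the three prescribed points (which are in general position precisely because $t_1^++t_2^++t_3^+\neq 0$), absorbing the diagonal ambiguity via $J_3\,\mathrm{diag}(d_1,d_2,d_3)\,J_3=\mathrm{diag}(d_2d_3,d_1d_3,d_1d_2)$, the check that $J_3$ carries the tripod $(T^+)^{-1}C$ to another tripod concurrent at $J_3(P^+)$, and your derivation of $b$. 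Your relations $s_i=-(t_j^++t_k^+)$, $v_i=-a(S-t_i^+)+b$ and $-(v_j+v_k)=at_i^++b$ are exactly the equations (\ref{E:eq1})--(\ref{E:gline}) of Section \ref{SS:diller}, and they do collapse to $3b=aS$. (One small discrepancy: the second bullet literally prescribes $T^-(e_i)$, i.e.\ the images $p_i^-$ of the exceptional lines, whereas you prescribe the indeterminacy points $T^+e_i$; the two versions are exchanged by passing to $\check f^{-1}$, and the paper's own later use in Proposition \ref{P:param} matches your reading, so this is harmless.)

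The genuine gap is exactly the step you flag as ``the main obstacle,'' and the mechanism you propose cannot close it. Once $T^+$ is fixed, any $T^-$ carrying the image tripod onto $C$ componentwise yields a map whose restriction to $L_i$ is some affine map $t\mapsto a_it+b_i$ onto $L_{\tau(i)}$, and you claim the residual freedom in $T^-$ --- homologies with centre $C_{sing}$ --- forces $a_1=a_2=a_3=a$ and $b_1=b_2=b_3$. But in the coordinates of (\ref{E:param}) a homology centred at $[1:0:0]$ acts on the three parameters by $t\mapsto\lambda t+\beta_i$ with a \emph{single} multiplier $\lambda$ and translations satisfying $\beta_1+\beta_2+\beta_3=0$. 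Hence post-composition can equalize the translations (uniquely, giving $b=\tfrac13\sum b_i$, consistent with your formula) and rescale the common multiplier to the prescribed $a$, but it can never equalize three \emph{distinct} multipliers $a_i$. The equality of the $a_i$ --- and the fact that it holds for exactly one of the six labelings $\tau$, which is what makes $\tau$ part of the unique output rather than a free choice --- is an intrinsic property of the configuration that must be proved separately; this is precisely the content of Diller's theorem, obtained there from the induced action on $\mathrm{Pic}(C)$ (the ``group law'' on the tripod), not from the homology freedom. As written, existence and uniqueness are asserted at the point where they need to be demonstrated.
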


Suppose $\check f =T^- \circ J_3 \circ (T^+)^{-1}$ is a basic birational map properly fixing $C=L_1\cup L_2 \cup L_3$ in (\ref{E:cubic}) and $\check f (L_i) = L_{\tau(i)}$ for some permutation $\tau \in S_3$. With the affine coordinates $(t,i) = \gamma_i(t)$ for $C$ and the parametrizations $\gamma_i$ defined in (\ref{E:param}), we have the restriction map on the cubic $C$ given by $ f|_C : (t, i) \mapsto ( a t +b , \tau(i) ), $ for some $ a, b \in \mathbb{C}. $ By conjugating with an automorphism on $\mathbf{P}^2$, we may assume that $f|_C(1,i) = (1, \tau(i))$, that is, \[ f|_C : (t, i) \mapsto ( a (t-1)+1 , \tau(i) ) \qquad \text{for some } a \in \mathbb{C} \setminus \{ 0 \}. \]

Based on Lemma \ref{L:indp}, let us suppose that $p_i^+, q_i  \in L_i $ for $i=1,2,3$ and thus set \begin{equation}\label{E:param} p_i^+ = \gamma(t_i^+,i), \qquad p_i^- = \gamma(t_i^-, \tau(i)) \qquad \text{and }\qquad q_i =\gamma(s_i,i)=\gamma ( f^{-1}|_C (t_i^-, \tau(i))). \end{equation} Let us also suppose that the orbit data of $\check f$ is given by (\ref{E:555}) with appropriate permutation $\sigma$ with respect to the Lemma \ref{L:orbitdata}. 
Since each $n_i = n$, we have  
\begin{equation}\label{E:grouplaw} ( t_{\sigma(i)}^+, \sigma(i))= \check f_C^{n-1} (t_i^-, \tau(i)) = (a^{n-1} (t_i^- -1) +1, \sigma(i) ) \end{equation} Since the exceptional line $E_1^+$ passes through $p_2^+, p_3^+$ and $q_1$, we have $ s_1 + t_2^++ t_3^+ =0$. Thus using (\ref{E:param}) and (\ref{E:grouplaw}), we get 
\begin{equation}\label{E:eq1}  a^{n+1} ( t^-_{\sigma^{-1}(2)}+t^-_{\sigma^{-1}(3)} -2) =1-t_1^--3 a \end{equation}
Similarly we have $ s_2 + t_1^++ t_3^+ =0$ and $ s_3 + t_1^++ t_2^+ =0$. Thus we get  
 \begin{equation}\label{E:eq23}
\begin{aligned} & a^{n+1} ( t^-_{\sigma^{-1}(1)}+t^-_{\sigma^{-1}(3)} -2) =1-t_2^--3 a \\
& a^{n+1} ( t^-_{\sigma^{-1}(1)}+t^-_{\sigma^{-1}(2)} -2) =1-t_3^--3 a \\
\end{aligned}
 \end{equation}
 On the other hand, a generic line $h$ will intersect each line $L_i$ at one point, say $x_i = \gamma(r_i, i)$ and $x_1+x_2+x_3 =0$. Furthermore, the degree of $\check f = 2$ and thus $\#| \check f (h) \cap C |= 6$. Since $h$ also intersects each exceptional line at a single point, we have $ p_i^- \in \check f(h) $ for all $i=1,2,3$. This gives another condition on the parameters for critical images $ \sum a (r_i -1) +1 + \sum_i t_i^- = a ( \sum r_i) + 3 (1-a) + \sum t_i ^- = 0$, i.e.
 \begin{equation}\label{E:gline}
 \sum_i t_i^- = 3( a-1) 
 \end{equation}
 And similarly we have $\sum_i t_i ^+ = 3 \,\frac{1-a}{a}$.
Solving $4$ equations (\ref{E:eq1}- \ref{E:gline}), we get \[ a^{n+1} - 2 a^n + 2a -1 =0\quad \text{and} \quad t_i^-= \frac{-\,3 a }{1+ 2 a^n} \ \ \text{for all } i = 1,2,3. \]

\begin{prop}\label{P:param} Suppose $n \ge 4$.  There exists a basic birational map $\check f$ properly fixing three concurrent lines with orbit lengths $n,n,n$. This birational map $\check f$ lifts to an automorphism and is given by the largest root of the polynomial 
\begin{equation}\label{E:ddeg}
\chi_n (t) = t^{n+1} - 2 t^n + 2 t -1 
\end{equation}
\end{prop}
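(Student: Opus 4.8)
The plan is to produce the map directly from Diller's recipe (Theorem~\ref{T:bdk}) and then read its orbit data and dynamical degree off the parameters.

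First I would fix a pair of permutations $(\tau,\sigma)$ that is admissible for orbit lengths $n_1=n_2=n_3=n$ in the sense of Lemma~\ref{L:orbitdata} (for definiteness one may take $\tau=\sigma=\mathrm{Id}$, or, when $n\not\equiv 0\ (\mathrm{mod}\ 3)$, a $3$-cycle $\tau$). By Theorem~\ref{T:bdk}, prescribing a nonzero multiplier $a$ and three distinct parameters $t_1^+,t_2^+,t_3^+$ with $t_1^++t_2^++t_3^+\neq 0$ determines a unique basic map $\check f=T^-\circ J_3\circ(T^+)^{-1}$ that properly fixes $C$, restricts to $C$ as $(t,i)\mapsto(a(t-1)+1,\tau(i))$, and has indeterminacy points $p_i^+=\gamma_i(t_i^+)$. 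So the construction reduces to choosing the scalars $a,t_i^\pm$ so that the forward orbit of each exceptional line $E_i^+$ first meets the indeterminacy locus after exactly $n$ steps, with landing pattern $\sigma$.

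The next step is to convert the orbit-length requirement into algebra. Since $\check f|_C$ acts on each component by the affine map $t\mapsto a(t-1)+1$, and three points of $C$ are collinear precisely when their $t$-coordinates sum to zero, the return condition $\check f^{\,n}(E_i^+)=p^+_{\sigma(i)}$ is encoded by the group-law identity \eqref{E:grouplaw} together with the collinearity relations at the points $q_i\in E_i^+\cap C$. These are exactly the four equations \eqref{E:eq1}, \eqref{E:eq23} and \eqref{E:gline}, which are linear in the unknowns $t_i^-$ once $a$ is fixed; their consistency forces $\chi_n(a)=a^{n+1}-2a^n+2a-1=0$, and back-substitution yields the closed form $t_i^-=-3a/(1+2a^n)$ for every $i$ (together with $\sum_i t_i^+=3(1-a)/a$). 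Thus the whole map is governed by a single root $a$ of $\chi_n$.

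I would then take $a$ to be the largest real root of $\chi_n$. Because $\chi_n(1)=0$, $\chi_n'(1)=3-n<0$ for $n\ge 4$, and $\chi_n(2)=3>0$, this root lies in $(1,2)$. The step I expect to be the genuine obstacle is verifying that this choice yields an honest configuration: one must check $1+2a^n\neq 0$, that $t_1^+,t_2^+,t_3^+$ are distinct with nonzero sum, that no $p_i^\pm$ equals the common point $C_{sing}$, and---most importantly---that the three orbit segments $\{\check f^{\,j}(E_i^+)\}_{0\le j\le n}$ are pairwise disjoint and of length exactly $n$, so that the orbit data is truly $(n,n,n;\sigma)$ and not something shorter. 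Each possible premature collision or degeneracy is an algebraic condition that holds only for finitely many values of $a$, so the content of this step is a finite, explicit verification at the distinguished root. Granting non-degeneracy, the orbit data are three finite integers and Section~\ref{SS:basis} provides an iterated blowup $\pi\colon X\to\mathbf{P}^2$ of the $3n$ orbit points on which $\check f$ lifts to an automorphism $f\colon X\to X$.

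Finally I would identify the dynamical degree. By construction the lift $f$ is algebraically stable on $X$, so $\lambda(\check f)=\lambda(f)$ is the spectral radius of $f^*$ on $Pic(X)$, whose characteristic polynomial is given by the orbit-data formula of \cite{Bedford-Kim:2004} for the chosen permutation type at $n_1=n_2=n_3=n$. Removing its cyclotomic part leaves the Salem factor of $\chi_n$; since $\chi_n(t)=t^{n+1}-2t^n+2t-1$ vanishes at $t=1$ and has exactly one root of modulus greater than $1$, that root is the Salem number in question. Hence $\lambda(\check f)$ equals the largest root of $\chi_n$, as asserted in \eqref{E:ddeg}, which finishes the proof.
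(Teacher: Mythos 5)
Your proposal follows essentially the same route as the paper: invoke Diller's construction (Theorem~\ref{T:bdk}) with the parameters forced by the collinearity and orbit-return equations \eqref{E:eq1}--\eqref{E:gline}, whose consistency yields $\chi_n(a)=0$ and $t_i^-=-3a/(1+2a^n)$, and then lift via Section~\ref{SS:basis}. You are in fact more explicit than the paper about the non-degeneracy checks (distinctness of parameters, absence of premature orbit collisions) and about locating the root in $(1,2)$, which the paper's own proof leaves implicit.
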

\begin{proof}
Let $\alpha$ is a non-cyclotomic root of $\chi_n(t)$ and let
\begin{equation}\label{E:tsol} t_1\ =\  \frac{-\,3 \alpha }{1+ 2 \alpha^n}, \quad \text{and} \quad t_j = \frac{-\,3 \alpha^{j} }{1+ 2 \alpha^n} , j=2, 3, \dots, n 
\end{equation}
Then $ 3 t_n \ne 0$ and thus by Theorem \ref{T:bdk} we know there exists a basic birational map $\check f$ such that $\check f|_C: ( t, i) = ( \alpha t + (1-\alpha), \tau(i))$ for some $\tau \in S_3$ with $p_j^+ =(t_n+1, j) \in C$ for $j=1,2,3$. 
Since all three orbit lengths $n_i$ are finite, it follows that $\check f $ lifts to an automorphism on a rational surface obtained by blowing up the points in the orbits of exceptional lines $\{ (t_j+1 , \tau^{j} (i)), i = 1,2,3, j=1,\dots, n \}$.
\end{proof}

\begin{thm}\label{T:exactlysix} Let $C$ be a cubic curve with three lines joining at a single point. Suppose $n\ge 4$ such that $n$ is not divisible by both $2$ and $3$. There are exactly six quadratic birational maps $\check f_\sigma, \sigma \in S_3$ properly fixing $C$ with orbit data $n,n,n$ and a permutation $\sigma$ such that all six maps share the same base locus and the same dynamical degree $\delta_n$ where $\delta_n$ is the largest real root of $\chi_n(t)$ given in (\ref{E:ddeg}). 
\end{thm}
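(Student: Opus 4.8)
The plan is to reduce the statement to the combinatorics of $S_3$ and then supply the analytic content from Proposition \ref{P:param} and Theorem \ref{T:bdk}. By Lemma \ref{L:basic} every quadratic map properly fixing $C$ is basic and permutes the three lines by some $\tau\in S_3$, so I first pin down how the base-point permutation $\sigma$ is forced by $\tau$ once the orbit lengths are $n,n,n$. This is exactly the content of Lemma \ref{L:orbitdata}: inspecting the three cases there, one finds in each that $\sigma(i)=\tau^{n}(i)$, i.e. $\sigma=\tau^{n}$ in $S_3$. Thus classifying the admissible permutations $\sigma$ amounts to understanding the power map $\tau\mapsto\tau^{n}$ on $S_3$.

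This is where the arithmetic hypothesis enters. When $n$ is divisible by neither $2$ nor $3$, the map $\tau\mapsto\tau^{n}$ is a bijection of $S_3$: it fixes the identity, fixes every transposition because $n$ is odd, and permutes the two $3$-cycles (fixing each if $n\equiv 1$, interchanging them if $n\equiv 2\pmod 3$) because $3\nmid n$. Hence each $\sigma\in S_3$ equals $\tau^{n}$ for a unique line permutation $\tau=\tau(\sigma)$, so the six permutations are all realized and no two by the same $\tau$. For each such $\tau$ I would run the construction of Proposition \ref{P:param} with the orbit-closing relation (\ref{E:grouplaw}) for $\sigma=\tau^{n}$: solving (\ref{E:eq1})--(\ref{E:gline}) forces the multiplier on $C$ to satisfy $\alpha^{n+1}-2\alpha^{n}+2\alpha-1=0$ and forces $t_i^{-}=-3\alpha/(1+2\alpha^{n})$ for every $i$, and neither relation depends on $\sigma$. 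Fixing $\alpha=\delta_{n}$, the largest real root, Theorem \ref{T:bdk} returns a unique basic map $\check f_{\sigma}$ that properly fixes $C$, has line permutation $\tau(\sigma)$, orbit data $n,n,n,\sigma$, and lifts to an automorphism. Because the equation for $\alpha$ is the same for all six, each $\check f_{\sigma}$ has dynamical degree the largest root $\delta_{n}$ of the Salem factor of $\chi_{n}$, giving the common dynamical degree.

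It then remains to see that all six maps share one base locus. The points blown up for $\check f_{\sigma}$ form the orbit set $\{(t_j+1,\tau^{j}(i)) : i=1,2,3,\ 1\le j\le n\}$, where the parameters $t_j=-3\alpha^{j}/(1+2\alpha^{n})$ depend only on $\alpha=\delta_{n}$ and not on $\tau$. For each fixed $j$ the index $\tau^{j}(i)$ runs over all of $\{1,2,3\}$ as $i$ does, so this set equals $\{(t_j+1,k) : k=1,2,3,\ 1\le j\le n\}$, which is manifestly independent of $\tau$. Hence the six maps blow up the very same $3n$ points, and since distinct $\tau$ give maps acting differently on the three lines, the six $\check f_{\sigma}$ are genuinely distinct. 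Together with the bijection above this yields exactly six maps, all on the common base locus and with the common dynamical degree $\delta_{n}$.

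The main obstacle is making the count ``exactly six'' precise rather than ``at least six''. One must fix the expanding multiplier $\alpha=\delta_{n}$ once and for all, since the reciprocal choice $\alpha=1/\delta_{n}$ produces the inverse maps, which also properly fix $C$ with orbit lengths $n,n,n$ and the same dynamical degree; with $\alpha=\delta_{n}$ fixed, the rigidity of Theorem \ref{T:bdk} together with the $\sigma$-independence of the solution of (\ref{E:eq1})--(\ref{E:gline}) shows that the data $(\tau,\alpha)$, and therefore $\check f_{\sigma}$, is determined by $\sigma$ alone, closing the count.
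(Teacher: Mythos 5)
Your proof is correct and follows essentially the same route as the paper: reduce to basic maps via Lemma \ref{L:basic}, use Lemma \ref{L:orbitdata} to match each $\sigma$ with a unique line permutation $\tau$ (your reformulation $\sigma=\tau^{n}$ and the bijectivity of the power map is a cleaner packaging of the paper's case analysis), and then invoke Proposition \ref{P:param} and Theorem \ref{T:bdk} together with the observation that $\{\tau^{j}(i)\}=\{1,2,3\}$ makes the base locus independent of $\tau$. Your added care about fixing $\alpha=\delta_n$ rather than its reciprocal to nail down ``exactly six'' is a reasonable refinement the paper leaves implicit.
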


\begin{proof} If a quadratic birational map $\check f$ properly fixes $C$, then by Lemma \ref{L:basic} we know that $\check f$ must be basic, that is $\check f = T^- \circ J_3 \circ (T^+)^{-1}$ for some linear maps $T^\pm$. When $n$ is not divisible by both $2$ and $3$, due to Lemma \ref{L:orbitdata} one can choose the rotation $\tau$ of invariant lines based on the permutation $\sigma$ in the orbit data as follows:
\begin{itemize}
\item if $\sigma = id$ or $(i,j)$ a transpose, then $\tau = \sigma$
\item if $n\equiv 1$ (mod $3$) and $\sigma $ is a cyclic permutation, then $\tau = \sigma$
\item if $n\equiv 2$ (mod $3$) and $\sigma $ is a cyclic permutation, then $\tau = \sigma^2$
\end{itemize}
For any $\tau \in S_3$ and $j\ge 1$, $\{ \tau^j(i), i=1,2,3\} = \{1,2,3\}$. Thus the base locus \[ \{ (t_j+1, \tau^j(i)), i=1,2,3, j=1,\dots, n\} = \{ (t_j+1, i), i = 1,2,3, j=1,\dots, n\}.\]
Therefore this Theorem is the immediate consequence of Theorem \ref{T:bdk} and Proposition \ref{P:param}.
\end{proof}

\begin{rem}Let $\check f = T^- \circ J_3 \circ (T^+)^{-1}$ be a basic quadratic map in the previous Proposition \ref{P:param}. Since the automorphism on $\mathbf{P}^2$ is given by a $3\times 3$ non-singular matrix and $p_i^- = \check f \, T^+( \{ x_i=0\}) = T^- e_i$, we see the each column of $T^-$ is determined by $p_i^-$s. Similarly, $T^+$ is determined by $p_i^+$. With the condition that $\check f$ fixes the singular point (the common intersection point of three lines), one can get the explicit formula for $\check f$. The computer codes written in SageMath and Mathematica are available for download at the author's website \texttt{https://www.math.fsu.edu/~kim/publication.html}. 
\end{rem}

\section{A rational surface $X$ with $15$ blowups}\label{S:surface}
By Theorem \ref{T:exactlysix}, we see that if the common orbit length $n=5$, then all six permutations can occur in the orbit data. Let us suppose that \[ n_1=n_2=n_3 =5. \] With the $t_i$'s defined in Proposition \ref{P:param}, let us define a set of $15$ distinct points in the invariant cubic $C$. 
\begin{equation} \label{E:baselocus}
B = \{ p_{i,j}:= ( 1+ t_j, i ) \in C : i = 1,2,3, j= 1, 2, \dots, 5 \} \end{equation}
Let $X =B\ell_B( \mathbf{P}^2)$ be a rational surface obtained by blowing up $15$ distinct points in $B \subset \mathbf{P}^2$.

\begin{thm}\label{T:six}
Let $C$ be a cubic defined in (\ref{E:cubic}) and let $n=5$. Then for each $\sigma\in S_3$, there is the unique basic birational map $\check f_\sigma$ properly a cubic $C$ with orbit data $n,n,n$ and $\sigma$. Furthermore, $\check f_\sigma$ lift to an automorphism $f_\sigma : X \to X$ on a rational surface $X$ defined above. 
\end{thm}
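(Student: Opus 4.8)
The plan is to obtain Theorem \ref{T:six} as the specialization of Theorem \ref{T:exactlysix} to $n=5$, supplemented by the lifting clause of Proposition \ref{P:param}. The first step is to check that $n=5$ meets the hypothesis of Theorem \ref{T:exactlysix}: since $5$ is divisible by neither $2$ nor $3$, Lemma \ref{L:orbitdata} permits every permutation $\sigma\in S_3$ to occur as an orbit-data permutation. Reading off the dictionary between the line-rotation $\tau$ and the orbit permutation $\sigma$ from that lemma, one finds $\tau=\sigma$ when $\sigma$ is the identity or a transposition (because $5$ is odd) and $\tau=\sigma^2$ when $\sigma$ is a $3$-cycle (because $5\equiv 2\pmod 3$); this assignment is a bijection of $S_3$, so each $\sigma$ determines exactly one admissible $\tau$.

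Next, for each $\sigma$ I would run Diller's construction (Theorem \ref{T:bdk}) with the parameters $t_j$ produced in Proposition \ref{P:param} for $n=5$, i.e. built from a non-cyclotomic root $\alpha$ of $\chi_n(t)=t^{n+1}-2t^n+2t-1$ of (\ref{E:ddeg}). This yields a basic map $\check f_\sigma = T^-\circ J_3\circ (T^+)^{-1}$ properly fixing $C$, with $\check f_\sigma|_C:(t,i)\mapsto(\alpha(t-1)+1,\tau(i))$ and orbit data $5,5,5$ realizing $\sigma$. Uniqueness of $\check f_\sigma$ for each fixed $\sigma$ is exactly the word \emph{exactly} in Theorem \ref{T:exactlysix}, which in turn rests on the uniqueness clause of Theorem \ref{T:bdk}; this gives the first assertion of the theorem.

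The crux is that all six maps have one and the same base locus, namely $B$ of (\ref{E:baselocus}). By Proposition \ref{P:param} the base locus of $\check f_\sigma$ is the orbit set $\{(1+t_j,\tau^j(i)):i=1,2,3,\ j=1,\dots,5\}$. For any $\tau\in S_3$ and any $j$ one has $\{\tau^j(1),\tau^j(2),\tau^j(3)\}=\{1,2,3\}$, so relabelling the line index identifies this set with $\{(1+t_j,i):i=1,2,3,\ j=1,\dots,5\}=B$, independently of $\sigma$. Hence every $\check f_\sigma$ blows up exactly the $15$ points of $B$.

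Finally, since all three orbit lengths are finite (equal to $5$), the lifting half of Proposition \ref{P:param} — equivalently the recipe of Section \ref{SS:basis} — makes each $\check f_\sigma$ an automorphism of the surface obtained by blowing up its orbit points; by the previous paragraph that surface is $X=B\ell_B(\mathbf{P}^2)$ for all $\sigma$ at once, producing the desired $f_\sigma:X\to X$. I expect no serious obstacle: the genuine content — existence, uniqueness, and the common dynamical degree of the six maps — is already carried by Theorem \ref{T:exactlysix}, so the only thing to verify directly is the $\sigma$-independence of the base locus, which reduces to the elementary observation that each power of $\tau$ permutes the three line labels onto themselves.
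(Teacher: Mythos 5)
Your proposal is correct and follows essentially the same route as the paper: existence and uniqueness from Theorem \ref{T:bdk} and Proposition \ref{P:param} (the paper cites these directly rather than via Theorem \ref{T:exactlysix}, but the content is identical), the dictionary between $\tau$ and $\sigma$ from Lemma \ref{L:orbitdata} (your $\tau=\sigma$ for transpositions and $\tau=\sigma^2$ for $3$-cycles is exactly the paper's $\tau=\sigma^{-1}$), and the $\sigma$-independence of the base locus because each power of $\tau$ permutes the line labels.
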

\begin{proof}
The existence and the uniqueness of such birational map $\check f_\sigma$ is clear from Proposition \ref{P:param} and Theorem \ref{T:bdk}. From Lemma \ref{L:orbitdata}, we see that the permutation  $\tau$ of three lines must be equal to $\sigma^{-1}$. It follows that the union of the orbit of exceptional lines is given by $ \{ (t_j+1 ,  \sigma^{-j} (i)), i = 1,2,3, j=1,\dots, 5 \}$. Since  $t_j$ does not depend on $i$ and $\sigma \in S_3$ is a permutation, we see that \[\{ (t_j+1 ,  \sigma^{-j} (i)), i = 1,2,3, j=1,\dots, 5 \} = \{ (t_j+1 ,  i), i = 1,2,3, j=1,\dots, 5 \} \quad \text{for all } \sigma \in S_3.\]
And thus, for all $\sigma \in S_3$,  $\check f_\sigma$ lifts to an automorphism on $X = B\ell_B( \mathbf{P}^2)$. 
\end{proof}
Let $e_0$ be the pre-image of a generic line in $\mathbf{P}^2$ and $e_i$ be the class of the exceptional divisor over the point $(t_j+1, k) \in C$ with $i= (k-1) 5+ j$. Let $\{ e_0,e_1, \dots, e_{15}\}$ be an ordered basis of $Pic(X)$ \[ Pic(X) = \langle e_0,e_1, \dots, e_{15} \rangle. \] Recall that $C$ given in (\ref{E:cubic}) consists of three lines $L_1,L_2,$ and $L_3$ joining at one point. 
Each line $L_i$ contains five centers of blowups $(t_j+1, i), j=1,2,\dots,5$. Thus the classes of their strict transformations are given by
\begin{equation}\label{E:pic}
[C] = 3 e_0 - \sum_{i=1}^{15} e_i, \qquad \text{ and } \quad  [L_i] = e_0 - \sum_{j=1}^5 e_{j+(i-1) 5}, \ \ i =1,2,3.
\end{equation} 
Let us use $C$ for both the invariant cubic in $\mathbf{P}^2$ and its strict transformation in $X$.


\subsection{Six Quadratic Automorphisms}\label{SS:sixaut}

The coordinate functions of  those six basic maps are given by polynomials in $\mathbb{Z}(\alpha)$ where $\alpha$ is a root of the Salem factor  
\begin{equation}\label{E:salemfactor} \varphi(t) = t^4 - 2 t^3 + t^2 - 2 t+1 \end{equation} of the polynomial $\chi_5(t)$ in (\ref{E:ddeg}).
Let us set \[ \mathit{s} \ =\  \frac{ 2 \alpha^5+1}{\alpha^5-1}, \qquad \text{and } \qquad \mathit{r} \ =\  \frac{2\alpha^5+1}{2 \alpha^5-3 \alpha+1}.  \]
Let us define automorphisms on $\mathbf{P}^2$ as follows:
 \[ \mathcal{S} =\frac{1}{3}  \begin{bmatrix} \ 1&\ 1&\ 1 \\ \ \mathit{s}& -\mathit{s}&\ 0\\\ \mathit{s}&\ 0&-\mathit{s} \end{bmatrix}, \quad \mathcal{T}_{id} = \frac{1}{3}  \begin{bmatrix} \ 1&\ 1&\ 1\\ -\mathit{r}&\ \mathit{r}&\ 0\\-\mathit{r}&\ 0&\ \mathit{r} \end{bmatrix} , \quad \mathcal{T}_{(12)} = \frac{1}{3}  \begin{bmatrix} \ 1&\ 1&\ 1\\ \ \mathit{r}&-\mathit{r}&\ 0\\ \ 0&- \mathit{r}&\ \mathit{r} \end{bmatrix},\]
 \[  \mathcal{T}_{(13)} = \frac{1}{3}  \begin{bmatrix} \ 1&\ 1&\ 1\\ \ 0&\ \mathit{r}&-\mathit{r}\\\ \mathit{r}& \ 0&- \mathit{r} \end{bmatrix},  \qquad \quad \mathcal{T}_{(23)} = \frac{1}{3}  \begin{bmatrix} \ 1&\ 1&\ 1\\ -\mathit{r}&\ 0& \mathit{r}\\ - \mathit{r}&\ \mathit{r}&\ 0 \end{bmatrix},\]
  \[  \mathcal{T}_{(123)} = \frac{1}{3}  \begin{bmatrix} \ 1&\ 1&\ 1\\ \ 0&- \mathit{r}&\ \mathit{r}\\ \mathit{r}&\ -\mathit{r}& \ 0 \end{bmatrix},  \quad \text{and} \quad \mathcal{T}_{(132)} = \frac{1}{3}  \begin{bmatrix} \ 1&\ 1&\ 1\\ \ \mathit{r}&\ 0& -\mathit{r}\\ \ 0& \ \mathit{r}&- \mathit{r} \end{bmatrix}. \]
 Here we are using the the cycle notation for a permutation $\sigma \in S_n, n \in \mathbb{N}_{>0}$:  \[ \sigma = (a_{i_1}, a_{i_2}, \dots, a_{i_j}) \in S_n \ \ \Leftrightarrow\ \  \ \left\{ \begin{aligned} &\sigma(a_{i_k}) = a_{i_{k+1}},\ \ \text{for } k=1, \dots, j-1, \\
 &\sigma(a_{i_j}) = a_{i_1}, \ \ \text{and} \\
 &\sigma(a_s) = a_s,\ \  \text{if }  s \ne i_1, i_2, \dots, i_j \\ \end{aligned} \right.. \] 
  
Then for each $\sigma \in S_3$, with the Cremona involution $J_3: \mathbf{P}^2 \dasharrow \mathbf{P}^2$ we have 
 \begin{equation}\label{E:sixaut} \check f _\sigma = \mathcal{T}_\sigma \circ J_3 \circ \mathcal{S}^{-1}. \end{equation} Notice that in terms of $t_i$'s in (\ref{E:tsol}), we have $s= -1/(t_5+1)$ and $r=1/(t_1+1)$. Thus it is easy to check that each $\check f_\sigma$ is the one constructed in the subsection \ref{SS:diller}. All six birational maps lift to automorphisms $f_\sigma:X \to X$. 


\subsection{Induced Actions on $Pic(X)$}\label{SS:sixaction}
Those six birational maps $\check f_\sigma, \sigma \in \Sigma_3$  share the same base locus and satisfy (\ref{E:grouplaw}). To describe their actions on $Pic(X)$, let $\vs_\kappa$ be the Cremona involution through the vector $\ve - \ve_{5} - \ve_{10} - \ve_{15}$:
\[ \vs_\kappa \ \ : \ \ \left\{ \ \begin{aligned} & \ve_0 \mapsto 2 \ve_0 - \ve_5- \ve_{10} - \ve_{15} \\ & \ve_i \mapsto  \ve_0 - \ve_5- \ve_{10} - \ve_{15} + \ve_i, \qquad \text{if} \quad i=5.10,15 \\ & \ve_j \mapsto \ve_j, \qquad \qquad \text{ if } \quad i \ne 0,5,10,15 \\ \end{aligned} \right. \]
 With the cycle notation of permutations in $S_{15} $ we have
 
 \begin{equation}\label{E:sixaction}
 \begin{aligned}
 & f_{id}^*  = \vs_\kappa\, (5\ 4\ 3\ 2\ 1) \,(10\ 9\ 8\ 7\ 6)\, (15\ 14\ 13\ 12\ 11) \\
 & f_{(12)}^*  = \vs_\kappa\, (5\ 9\ 3\ 7\ 1\ 10\ 9\ 8\ 7\ 6)\, (15\ 14\ 13\ 12\ 11) \\
  & f_{(13)}^*  = \vs_\kappa\, (5\ 14\ 3\ 12\ 1\ 15\ 4\ 13\ 2\ 11)\, (10\ 9\ 8\ 7\ 6)\\
  & f_{(23)}^*  = \vs_\kappa\, (5\ 4\ 3\ 2\ 1) \,(10\ 14\ 8\ 12\ 6\ 15\  9\ 13\  7\ 11) \\ 
    & f_{(123)}^*  = \vs_\kappa\, (5\ 9\ 13\ 2\ 6\ 15\ 4\ 8\ 12\ 1\ 10\ 14\ 3\ 7\ 11)\\
        & f_{(132)}^*  = \vs_\kappa\, (5\ 14\ 8\ 2\ 11\ 10\ 4\ 13\ 7\ 1\ 15\ 9\ 3\ 12\ 6).\\
 \end{aligned}
 \end{equation}
 
 These actions are identified with elements in $W_{15}$. Let  $G \subset W_{15}$  be a subgroup generated by those six actions in (\ref{E:sixaction}):
  \begin{equation}\label{E:subgroupG}
 G = \langle\  f_\sigma^*, \sigma \in S_3 \ \rangle 
 \end{equation}

\section{Automorphism group $\text{Aut}(X)$ with six generators}\label{S:autgroup}
To understand the automorphism group $\text{Aut}(X)$, let us start discussing the properties of automorphisms on $X$.

\begin{prop}\label{P:allfixC}
If $\check f: \mathbf{P}^2 \to \mathbf{P}^2$ be a birational map which lifts to an automorphism $f \in \text{Aut}(X)$, then $\check f$ properly fixes the invariant cubic $C$.
\end{prop}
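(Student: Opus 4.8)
The plan is to deduce the statement from two facts: that the strict transform of $C$ is the \emph{unique} effective anticanonical divisor on $X$, and that the triple point $C_{sing}$ is never blown up. By (\ref{E:pic}) we have $[C] = 3e_0 - \sum_{i=1}^{15} e_i = -K_X$, so the strict transform of $C$ represents the anticanonical class. First I would show this class has only one effective representative. An effective anticanonical divisor is the strict transform of a plane cubic $D$ through all $15$ points of $B$. Each line $L_i$ carries five of these points; since a cubic meets a line it does not contain in exactly three points by B\'ezout, and $5 > 3$, the cubic $D$ must contain each $L_i$. Hence $D \supseteq L_1 \cup L_2 \cup L_3 = C$, forcing $D = C$. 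Thus $h^0(X, -K_X) = 1$ and $|-K_X| = \{C\}$.

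Next I would use that $f \in \text{Aut}(X)$ preserves the canonical class, so $f^*$ fixes $[C] = -K_X$. Because $C$ is the unique effective representative of this class, $f^{-1}(C) = C$ as divisors on $X$, i.e. $f(C) = C$; pushing forward along $\pi$ yields $\check f(C) = C$ in $\mathbf{P}^2$, which is the first requirement in the definition of proper fixing.

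For the second requirement I would exploit that every point of $B$ lies on the smooth locus of $C$, so $C_{sing}$ is not blown up and $\pi$ is an isomorphism over a neighborhood of $C_{sing}$; let $\tilde C_{sing}$ denote the unique point of $X$ lying over it. Blowing up smooth points of $C$ elsewhere leaves the rest of $C$ smooth, so the three lines still meet only at $\tilde C_{sing}$, which is therefore the unique singular point of the strict transform $\tilde C$. Since $f$ is an automorphism with $f(\tilde C) = \tilde C$, it preserves the singular locus, giving $f(\tilde C_{sing}) = \tilde C_{sing}$. Composing with the local isomorphism $\pi$ on both sides shows $\check f = \pi \circ f \circ \pi^{-1}$ is a local biholomorphism at $C_{sing}$ with $\check f(C_{sing}) = C_{sing}$; running the same argument for $f^{-1}$ handles $\check f^{-1}$. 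Consequently $C_{sing}$ is an indeterminacy point of neither $\check f$ nor $\check f^{-1}$, which completes proper fixing.

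I expect the anticanonical uniqueness to be entirely routine, since it rests only on B\'ezout and the fact that five blown-up points lie on each line. The more delicate step is the second one: one must argue carefully that $C_{sing}$ cannot be an indeterminacy point, and this genuinely uses both that $B$ avoids $C_{sing}$ (so $\pi$ is a local isomorphism there) and that any automorphism preserving the invariant anticanonical curve must fix its unique singularity. This is where I would be most careful to phrase the local-isomorphism argument correctly.
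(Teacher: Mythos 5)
Your proposal is correct and follows essentially the same route as the paper: both arguments rest on $f^*$ preserving the anticanonical class, the uniqueness of a cubic through the fifteen base points (five on each line, which forces the cubic to contain all three lines), and the fact that the base locus lies in the regular part of $C$ so that $C_{sing}$ cannot be a point of indeterminacy. You simply supply more detail than the paper does — the B\'ezout count for uniqueness and the local-isomorphism argument at $C_{sing}$ — where the paper instead notes directly that every indeterminacy point of $\check f$ must be blown up and hence lies in $B$.
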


\begin{proof}
Notice that if $\check f$ has a point of indeterminacy in $\mathbf{P}^2$, then this point must be blown up to get an automorphism on $X$. Thus all points of indeterminacy are in the base locus, which is in the set of regular points of $C$. Since $C$ is an anti-canonical curve, so is $f(C)$, i.e., $f(C)$ is a degree $3$ curve passing through all exceptional divisors. It follows that $\check f(C)$ is a degree $3$ curve passing through all $15$ points in the base locus. The cubic $C$ consists of three lines such that each line contains exactly 5 (distinct) points in the base locus. There exists a unique such cubic. Thus we have $\check f(C) = C$. 
\end{proof}

\begin{prop}\label{P:fixC}
If $f \in \text{Aut}(X)$ then $f$ fixes the cubic $C$.
\end{prop}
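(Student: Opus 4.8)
The plan is to bootstrap from Proposition \ref{P:allfixC}. That proposition already tells us that any automorphism $f \in \text{Aut}(X)$ descends to a birational map $\check f$ on $\mathbf{P}^2$ which properly fixes $C$ as a set, i.e. $\check f(C) = C$. What remains is to upgrade the set-theoretic statement $\check f(C)=C$ to the statement that $f$ fixes $C$ \emph{as a curve in $X$}, meaning $f(C)=C$ where $C$ now denotes the strict transform inside $X$ (the two uses of $C$ were identified in the text following (\ref{E:pic})). The key point is that passing from $\mathbf{P}^2$ to $X$ is harmless here precisely because the base locus $B$ sits on $C$: the proper-fixing condition guarantees that the indeterminacy points of $\check f$ are regular points of $C$ lying in $B$, so blowing them up does not destroy the invariance. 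Concretely, I would argue at the level of cohomology classes together with the set-theoretic identity.

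First I would recall from (\ref{E:pic}) that $[C] = 3e_0 - \sum_{i=1}^{15} e_i = -K_X$ is the anticanonical class of $X$, and that $f^* $ preserves the canonical class, so $f^*[C] = [C]$. Thus the strict transform $f^{-1}(C)$ is an effective anticanonical divisor in the same class as $C$. Next I would observe that $C$ is the unique effective anticanonical divisor on $X$: an anticanonical curve on $\mathbf{P}^2$ is a cubic, and by the argument in Proposition \ref{P:allfixC} the only cubic through all $15$ points of the base locus $B$ is $C$ itself (each of the three lines $L_i$ carries exactly five of the points, and three concurrent lines through such a configuration are forced). Pulling back to $X$, uniqueness of the cubic downstairs translates into uniqueness of the effective anticanonical divisor upstairs. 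Combining $f^*[C]=[C]$ with this uniqueness forces $f(C)=C$ as a divisor on $X$.

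The one subtlety I want to handle carefully is the compatibility between ``$\check f$ fixes the cubic in $\mathbf{P}^2$'' and ``$f$ fixes the strict transform in $X$,'' since $f$ permutes the exceptional divisors and the proper transform of $C$ meets some of them. Here the fact that the base points lie in the \emph{nonsingular} part of $C$ (away from $C_{sing}$, by the proper-fixing hypothesis) is what keeps the strict transform irreducible-component structure intact under $f$: $f$ may permute the three lines $L_i$ according to the rotation $\tau$, but it sends $C$ to $C$ because $\check f(C)=C$ and blowups are performed along orbits contained in $C$. So the set-theoretic invariance of $C$ in $\mathbf{P}^2$ lifts directly.

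The main obstacle is really just making the uniqueness argument airtight, i.e. ensuring there is no competing effective anticanonical divisor on $X$ other than the proper transform of $C$. This is where I would lean on the genericity of the $15$-point configuration encoded in the parameters $t_j$ from Proposition \ref{P:param}: since the five points on each line are distinct and the three lines are genuinely distinct and concurrent, no other cubic (reducible or irreducible) passes through all of $B$, so $h^0(X, -K_X) = 1$ and $C$ is the unique member of $|-K_X|$. Once that is established, the chain $f^*[-K_X] = [-K_X]$ plus $\dim |-K_X| = 0$ gives $f(C)=C$ immediately, completing the proof.
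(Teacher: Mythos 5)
Your proposal is correct and follows the same route as the paper: the paper's own proof is a one-line deduction from Proposition \ref{P:allfixC} (``since $f$ covers a birational map $\check f$ on $\mathbf{P}^2$, this is an immediate consequence of the previous Proposition''), and your argument is simply a fleshed-out version of that deduction, with the anticanonical-uniqueness reasoning you supply already appearing inside the paper's proof of Proposition \ref{P:allfixC} itself. The extra care you take in passing from $\check f(C)=C$ in $\mathbf{P}^2$ to invariance of the strict transform in $X$ is a legitimate filling-in of a step the paper treats as immediate, not a different method.
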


\begin{proof}
Since $f$ covers a birational map $\check f$ on  $\mathbf{P}^2$, this Proposition is the immediate consequence of the previous Proposition.
\end{proof}

Also, we have
\begin{lem}\label{L:nonodal}
There is no curve $W\subset \mathbf{P}^2$ such that $W \cap C \subset B$. 
\end{lem}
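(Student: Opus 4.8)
The plan is to argue by contradiction: suppose $W=\{F=0\}$ is a curve of degree $d$ with $W\cap C\subset B$, and extract from this hypothesis a single linear relation on the intersection multiplicities that the arithmetic of the parameters $t_j$ cannot satisfy with nonnegative multiplicities. First I would dispose of the degenerate cases. Since $B$ is a finite set contained in the smooth locus $C\setminus\{C_{sing}\}$, the hypothesis forces $W\cap C$ to be finite, so $W$ contains none of the lines $L_i$, and $W$ cannot pass through the singular point $C_{sing}=[1:0:0]$ (otherwise $C_{sing}\in W\cap C\setminus B$). In particular $F(1,0,0)\neq 0$, so for each $i$ the restriction $F(\gamma_i(t))$ is a polynomial of degree exactly $d$ whose $d$ roots (with multiplicity) are the $\gamma$-parameters of the points of $W\cap L_i$. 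Writing $m_{i,j}\ge 0$ for the multiplicity with which $W$ meets $L_i$ at $p_{i,j}=(1+t_j,i)$, the containment says every root equals some $1+t_j$, whence $\sum_{j=1}^5 m_{i,j}=d$ for each $i$.

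The crucial step is a degree-independent relation generalizing the collinearity identity $t_1+t_2+t_3=0$: the sum of the $\gamma$-parameters of all points of $W\cap C$, counted with multiplicity, vanishes. This is a direct Vieta computation using $\gamma_1(t)=[-t:1:1]$, $\gamma_2(t)=[t:1:0]$, $\gamma_3(t)=[t:0:1]$; if $F_{abc}$ denotes the coefficient of $x_1^ax_2^bx_3^c$, the sums of the roots of $F(\gamma_1(t)),F(\gamma_2(t)),F(\gamma_3(t))$ are $(F_{d-1,1,0}+F_{d-1,0,1})/F_{d,0,0}$, $-F_{d-1,1,0}/F_{d,0,0}$, and $-F_{d-1,0,1}/F_{d,0,0}$, which add to $0$. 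Consequently
\[ \sum_{i=1}^3\sum_{j=1}^5 m_{i,j}\,(1+t_j)=0. \]

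Next I would convert this into arithmetic over $\mathbb{Q}(\alpha)$. Setting $M_j=\sum_{i=1}^3 m_{i,j}\ge 0$ and using $\sum_{i,j}m_{i,j}=3d$, the relation reads $\sum_{j=1}^5 M_j t_j=-3d$; substituting $t_j=-3\alpha^j/(1+2\alpha^5)$ and clearing denominators gives
\[ \sum_{j=1}^5 M_j\,\alpha^j = d\,(1+2\alpha^5). \]
Since $\alpha$ is a root of the irreducible Salem factor $\varphi(t)=t^4-2t^3+t^2-2t+1$, the set $\{1,\alpha,\alpha^2,\alpha^3\}$ is a $\mathbb{Q}$-basis; reducing $\alpha^4=2\alpha^3-\alpha^2+2\alpha-1$ and $\alpha^5=3\alpha^3+3\alpha-2$ (so $1+2\alpha^5=6\alpha^3+6\alpha-3$) and comparing the four rational coordinates turns the single identity into the system
\[ M_4+2M_5=3d,\quad M_1+2M_4+3M_5=6d,\quad M_2=M_4,\quad M_3+2M_4+3M_5=6d. \]
Together with the B\'ezout constraint $\sum_j M_j=3d$ this has the unique solution $M_1=M_3=M_5=3d$ and $M_2=M_4=-3d$, which is impossible since every $M_j\ge 0$ and $d\ge 1$.

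The point I expect to be the real content — the main obstacle — is producing an obstruction uniform in $d$: a priori a high-degree $W$ has enormous freedom in the $m_{i,j}$, and the parameter-sum relation supplies only one linear equation, which by itself is easily satisfiable. What rescues the argument is the number-theoretic rigidity of the $t_j$: because they are built from powers of a degree-$4$ algebraic integer, the single relation $\sum_j M_j\alpha^j=d(1+2\alpha^5)$ splits into four independent rational equations, and these together with B\'ezout overdetermine the $M_j$ and force a negative value regardless of $d$. Establishing the parameter-sum identity cleanly (it is automatically insensitive to tangencies once multiplicities are tracked) and then checking the reductions of $\alpha^4,\alpha^5$ are the two steps requiring care; the remainder is bookkeeping.
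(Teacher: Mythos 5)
Your proof is correct and follows essentially the same route as the paper's: both rest on the vanishing of the multiplicity-weighted sum of the parameters $1+t_j$ over $W\cap C$ (which the paper attributes to the group law and you derive explicitly via Vieta), followed by substituting $t_j=-3\alpha^j/(1+2\alpha^5)$, reducing modulo the minimal polynomial $t^4-2t^3+t^2-2t+1$, and comparing coefficients to force some multiplicities to be negative. Your version merely makes explicit a few points the paper leaves implicit, namely the exclusion of components of $C$ and of the singular point, the B\'ezout count $\sum_j M_j=3d$, and the final contradiction with $d\ge 1$.
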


\begin{proof}
Let $W$ be a curve in $\mathbf{P}^2$ and suppose $W \cap C \subset B$. By the group law, there are non-negative integers $m_1, \dots, m_5$ such that 
\begin{equation}\label{E:sum} \sum_{i=1}^5 m_i(1+t_i) = \frac{1}{1+ 2 \alpha^5} \, \left(\sum_{i=1}^5  m_i ( 1+ 2 \alpha^5 - 3 \alpha^i)\right) = 0 \end{equation}
where $\alpha$ is a root of the Salem polynomial $t^4-2 t^3+t^2-2 t +1$. 
Since $1+ 2\alpha^5 \ne 0$, (\ref{E:sum}) satisfies if and only if 
\[ \begin{aligned} \sum_{i=1}^5 &  m_i ( 1+ 2 \alpha^5 - 3 \alpha^i) \\ & = ( 2 \sum m_i - 3 m_5) \alpha^5 - 3 m_4 \alpha^4 - 3 m_3 \alpha^3 - 3 m_2 \alpha^2 - 3 m_1 \alpha + \sum m_i \ = \ 0\end{aligned}\]
Since the minimal polynomial of $\alpha$ is of degree $4$, there are integers $A, B$ such that
\[\sum_{i=1}^5   m_i ( 1+ 2 \alpha^5 - 3 \alpha^i) = (\alpha^4 - 2 \alpha^3 +\alpha^2 - 2 \alpha +1) ( A \alpha + B). \]
By comparing the coefficients, we get $ m_1=m_3=m_5=-m_2=-m_4$. Since $m_i$ are non-negative integers, the only possible solution is $m_i = 0 $ for all $i$. 
\end{proof}

\begin{prop}\label{P:nofix}
If $f \in \text{Aut}(X)$ is not linear, then the cubic $C$ does not contain a curve of fixed points.
\end{prop}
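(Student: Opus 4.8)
The plan is to argue by contraposition through the affine multiplier of $f$ along $C$. First I would record the reduction. If $f \in \mathrm{Aut}(X)$ is non-linear, then by Proposition \ref{P:allfixC} the underlying birational map $\check f$ properly fixes $C$, so by Lemma \ref{L:basic} it is basic quadratic, and its restriction to the cubic has the normalized form $\check f|_C : (t,i) \mapsto (a(t-1)+1, \tau(i))$ for some $a \in \mathbf{C}^*$ and $\tau \in S_3$, fixing the common intersection point ($t=\infty$) and the point $t=1$. Since the irreducible components of $C$ are the three lines $L_1, L_2, L_3$, any curve of fixed points contained in $C$ must contain one of the $L_i$ pointwise; after renaming, say $L_1$ is fixed pointwise. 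Pointwise fixing of $L_1$ forces $\tau(1)=1$, and since the multiplier of $\check f|_{L_1}$ at its fixed point $t=1$ is exactly $a$, it forces $a=1$. Thus the whole proposition reduces to showing that a non-linear $f$ has multiplier $a \neq 1$.

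I would establish $a \neq 1$ directly from the base locus. Because $f$ lifts to an automorphism of $X = B\ell_B(\mathbf{P}^2)$, the three indeterminacy points of $\check f$ lie in $B$ and the forward orbits of the three exceptional lines must be contained in $B$; in particular each line $L_i$ must receive the five blown-up points at the five \emph{distinct} parameters $1+t_1, \dots, 1+t_5$ of (\ref{E:baselocus}). If $a=1$, then from $b = \tfrac{1}{3}a \sum t_i^+$ of Theorem \ref{T:bdk} together with $\sum t_i^+ = 3(1-a)/a$ we get $b = 1-a = 0$, so $\check f|_C : (t,i) \mapsto (t, \tau(i))$ leaves every parameter unchanged and merely permutes the lines by $\tau$. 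Then the parameter is constant along every $\check f$-orbit on $C$, so each orbit of an exceptional line closes up after at most $|\tau| \le 3$ steps and realizes at most $|\tau|$ points; it can never be a length-five orbit through five distinct parameters, contradicting the structure of $B$ and the orbit data $n=5$ needed for $\check f$ to be an automorphism of $X$. Hence $a \neq 1$, and $L_1$ cannot be pointwise fixed.

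The conceptual reason behind this computation, which I would also record, is that the multiplier $a$ is constrained by the orbit-data relation $a^{n+1} - 2a^n + 2a - 1 = 0$ of Proposition \ref{P:param} (with $n=5$), whose left-hand side factors as $(t-1)(t+1)\varphi(t)$ with $\varphi$ the Salem polynomial (\ref{E:salemfactor}); positive entropy selects $a$ as a root of $\varphi$, and $\varphi(1) = -1 \neq 0$ yields $a \neq 1$ once more. I expect the main obstacle to be pinning $a$ down away from the cyclotomic roots $\pm 1$, i.e.\ confirming for \emph{every} non-linear element of $\mathrm{Aut}(X)$, not just the six explicit generators, that the multiplier along $C$ is the genuinely expanding dynamical degree $\lambda(\check f) = \delta$ rather than a root of unity. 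The base-locus argument of the previous paragraph handles the critical value $a=1$ cleanly and in a self-contained way; the remaining cyclotomic value $a=-1$ is excluded identically, since there the induced dynamics on each parameter line is an involution and again cannot generate five distinct parameters of $B$ along a length-five orbit, whereas any non-linear $f$ has positive entropy and hence $|a| = \lambda(\check f) > 1$. Combining these observations, no line of $C$ is pointwise fixed, which is the assertion.
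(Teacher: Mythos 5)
Your argument has a genuine gap at its very first step. Proposition \ref{P:allfixC} gives only that $\check f$ properly fixes $C$; Lemma \ref{L:basic} then concludes that $\check f$ is basic \emph{under the hypothesis that $\check f$ is quadratic}, which you have not established and which is false for a general non-linear $f\in\text{Aut}(X)$. For instance, a composition of two of the quadratic generators is a non-linear automorphism of $X$ whose underlying birational map has degree $>2$ (its dynamical degree can be $\delta^2$), and the proposition is in fact invoked later precisely for such non-quadratic compositions (in the proof of Proposition \ref{P:tt}). Once $\check f$ is not quadratic, everything downstream in your argument collapses: there are no longer three indeterminacy points with orbit data $(5,5,5)$, and the identities $b=\tfrac13 a\sum t_i^+$ and $\sum t_i^+=3(1-a)/a$ are artifacts of the Diller construction for basic quadratic maps. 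Your closing fallback --- ``any non-linear $f$ has positive entropy and hence $|a|=\lambda(\check f)>1$'' --- is doubly problematic: the implication ``non-linear $\Rightarrow$ positive entropy'' is essentially Proposition \ref{P:linearaut}, whose proof runs through Proposition \ref{P:tt} and hence through the present proposition, so using it here is circular; and even granting positive entropy, the multiplier $a$ along $C$ is the determinant $d(f)$, which is merely \emph{an} eigenvalue of $f^*$ and may be a unimodular Galois conjugate (or a root of unity), not the spectral radius. So ``$a\ne 1$'' does not follow from your considerations for general $f$.

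For contrast, the paper's proof makes no degree assumption at all. Supposing a component $L_1$ is pointwise fixed, it examines the five exceptional curves $E_j$ over the points $p_{1,j}\in L_1$: if all five are $f$-invariant, then $f^*$ fixes $e_0,e_1,\dots,e_5$ and $f$ is linear; otherwise some $E_j$ is sent to a curve through the fixed point $L_1\cap E_j$ whose total intersection with $C$ is forced (by invariance of intersection numbers and $f(C)=C$) to be concentrated over the base locus, and this descends to a plane curve $W$ with $W\cap C\subset B$, which Lemma \ref{L:nonodal} excludes by the group-law computation with the parameters $t_i$. If you want to keep your multiplier-based strategy, you would need to prove directly, for arbitrary $f\in\text{Aut}(X)$, that $d(f)=1$ together with trivial translation part forces linearity --- and the natural way to do that is exactly the intersection-theoretic argument above, so I would recommend adopting that route.
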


\begin{proof}
Suppose one component, say $L_1$, is a curve of fixed points. Since $L_1$ intersects exceptional curves $E_j, j=1, \dots, 5$ over $p_{1,j}$, for each $j=1, \dots 5$ either $E_j$ is fixed or $p_{1,j}$ is a point of indeterminacy. If all $E_j$'s are fixed, then we have $f^* e_i = e_i$ for all $i = 1, \dots, 5$ and $f^*e_0 = f^* ([L_1]- e_1-\cdots e_5)  = e_0$. It follows that $f$ is linear. Suppose one of $p_{1,j}$ is a point of indeterminacy. Since $L_1$ is a curve of fixed points, the exceptional curve $E_j$ maps to a curve $ fE_j$ passing through $L_1 \cap E_j$. Since $f$ preserves the intersection number, $f E_j$ can not intersect $C$. It follows that the $f E_j$ descends to a curve intersecting $C$ only at the base locus $B$. This is a contradiction by Lemma \ref{L:nonodal}.
\end{proof}

\subsection{Dynamical degrees and Determinants}

Note that $C\subset X$ is an anticanonical curve. If $f \in \text{Aut}(X)$ preserves the invariant cubic $C$, then $f^* \eta = d(f) \,\eta$ where $\eta$ is the unique (up to constant multiple) meromorphic form on $X$ with a simple pole along $C$. We call $d(f)$ the \textit{determinant} of $f$ as defined in \cite{McMullen:2007}. If a birational map $\check f$ properly fixing $C$ lifts to $f$ then the restriction $\check f|_C\, (x,i) = (d(f) x + b, \tau(i) )$ for a constant $b$ and a permutation $\tau$. The determinant $d(f)$ is, in fact, the determinant of the differential $\text{det} Df_p$ at the fixed point $p \notin C$. If the determinant $d(f)$ is not a root of unity, then $d(f)$ is a Galois conjugate of the dynamical degree $\delta(f)$. And we have

\begin{thm}[\cite{McMullen:2007,Uehara:2010}]
If $f \in \text{Aut}(X)$ is an automorphism on $X$, then the determinant $d(f)$ of $f$ is an eigenvalue of $f^*|_{Pic(X)}$. 
\end{thm}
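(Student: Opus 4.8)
The plan is to show that $d(f)$ satisfies an eigenvalue equation for $f^*$ by exploiting the compatibility between the action of $f$ on the meromorphic $2$-form $\eta$ and its action on cohomology. The key geometric input is that $C\subset X$ is an anticanonical curve, so $[C] = -K_X$ and $\eta$ is the unique (up to scalar) meromorphic $2$-form with a simple pole along $C$. First I would recall that the class of $\eta$, viewed in the appropriate cohomological framework, is tied to the anticanonical class $-K_X$, and that the defining relation $f^*\eta = d(f)\,\eta$ expresses exactly how $f$ scales this distinguished form.

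The core of the argument is to relate the scalar $d(f)$ to the linear action $f^*$ on $Pic(X)$. Since $\eta$ has a simple pole precisely along $C$ and no zeros, its associated divisor class is $-[C] = K_X$; because $f$ fixes $C$ (Proposition \ref{P:fixC}), the pullback $f^*\eta$ again has a simple pole along $C$, forcing $f^*\eta$ to be a scalar multiple of $\eta$, which is the definition of $d(f)$. The plan is then to produce a cohomology class---most naturally a holomorphic $1$-form or a suitable class along $C$, or better, to use the action on the tangent/restriction data at $C$---on which $f^*$ acts with eigenvalue $d(f)$. The cleanest route is to observe that the restriction $\check f|_C : (x,i)\mapsto (d(f)\,x + b, \tau(i))$ has multiplier $d(f)$, and that this multiplier manifests as the eigenvalue of the induced action on the space spanned by a natural eigen-class in $Pic(X)\otimes\mathbb{C}$. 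Concretely, I would look for an eigenvector of $f^*$ whose eigenvalue is $d(f)$ by analyzing how $f^*$ acts on the $1$-dimensional space of sections or residues attached to the anticanonical curve $C$.

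The most direct formalization uses the adjunction/residue identification: a meromorphic $2$-form with a simple pole along $C$ corresponds, via the residue, to a regular differential on $C$, and the transformation law $f^*\eta = d(f)\eta$ says that $f$ multiplies this residue datum by $d(f)$. I would then argue that $d(f)$ appears among the eigenvalues of $f^*$ acting on $Pic(X)\otimes\mathbb{C}$ either by identifying $\eta$ with an eigenclass directly, or by invoking the fact (from the determinant theory in \cite{McMullen:2007}) that when $d(f)$ is not a root of unity it is a Galois conjugate of the dynamical degree $\delta(f)$, and $\delta(f)$ together with all its Galois conjugates are precisely the eigenvalues of the Salem-type action $f^*$ on the Picard lattice. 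Since $f^*$ preserves the integral lattice $Pic(X)$, its characteristic polynomial has integer coefficients, so the full Galois orbit of any algebraic-integer eigenvalue lies in the spectrum; thus $d(f)$, being conjugate to the eigenvalue $\delta(f)$, must itself be an eigenvalue.

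The main obstacle will be handling the case where $d(f)$ is a root of unity, which is exactly the situation excluded from the "Galois conjugate of $\delta(f)$" statement. In that case the Salem-number argument does not directly apply, and I would instead argue more carefully: if $d(f)$ is a root of unity then $f$ has finite order on the relevant linear data, and one shows by a direct fixed-point or eigenspace computation---using that $f$ permutes the three lines $L_i$ and acts on each with multiplier $d(f)$---that the corresponding eigenvalue already appears in the action on the span of the line classes $[L_1],[L_2],[L_3]$ and $[C]$ inside $Pic(X)$. Establishing cleanly that the multiplier of $\check f|_C$ is realized as a genuine eigenvalue of $f^*$, rather than merely a conjugate, is the delicate step, and I expect it to require the explicit compatibility between the group-law translation parameter and the lattice action recorded in \eqref{E:grouplaw}.
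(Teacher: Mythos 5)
The paper does not prove this statement; it is quoted from \cite{McMullen:2007,Uehara:2010}, so I am comparing your proposal against the standard argument in those references. Your write-up correctly assembles the relevant objects (the form $\eta$, the multiplier of $\check f|_C$, the group law on $C$), but none of the concrete routes you actually name closes the argument, and the one that would work is only gestured at. (a) Identifying $\eta$ with an eigenclass in $Pic(X)$ cannot work as stated: the divisor class of $\eta$ is $K_X=-[C]$, which is \emph{fixed} by $f^*$ (eigenvalue $1$), so it never witnesses the eigenvalue $d(f)$; the scalar $d(f)$ lives on the one-dimensional space of anticanonical sections, which is not a subspace of $Pic(X)\otimes\mathbb{C}$. (b) The Galois-conjugacy route is circular: the fact that $d(f)$ is a Galois conjugate of $\delta(f)$ when it is not a root of unity is itself a \emph{consequence} of the theorem you are proving (a priori $d(f)$ is only the analytically defined scaling factor of $\eta$, and there is no independent reason it is even an algebraic number until it has been exhibited as an eigenvalue of the integer matrix $f^*$). (c) Your fallback for the root-of-unity case is also wrong: on the span of $[L_1],[L_2],[L_3],[C]$ the eigenvalues of $f^*$ are governed by the permutation $\tau$ of the lines, not by the multiplier $d(f)$ of the affine action $t\mapsto d(f)t+b$ along each line; nothing rules out, say, $\tau=\mathrm{id}$ with $d(f)=-1$, in which case that span contributes only the eigenvalue $1$.

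The missing idea is the marking (or ``energy'') functional. Using the parametrization of $C^{reg}$ in which three points are collinear iff their coordinates sum to zero, define $\phi:\kappa^{\perp}\otimes\mathbb{C}\to\mathbb{C}$ on the geometric basis by sending $e_i$ to the coordinate $t_i$ of the corresponding base point and $e_0$ to $0$ (well-definedness on $\kappa^{\perp}$ is exactly the collinearity normalization, which also kills the translation part $b$). The relation $\check f|_C(t,i)=(d(f)\,t+b,\tau(i))$ then gives $\phi\circ f^{*}=d(f)\,\phi$ on $\kappa^{\perp}$, and since $\phi\neq 0$ (the $t_i$ are not all zero, cf.\ Lemma \ref{L:nonodal}), $d(f)$ is an eigenvalue of the transpose of $f^{*}$ and hence of $f^{*}$. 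This is the content of the cited proofs of McMullen, Diller, and Uehara; you flag exactly this step as ``the delicate step'' but do not carry it out, so as written the proposal has a genuine gap.
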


Clearly, we have 

\begin{lem}
If $f:X \to X$ is an automorphism on $X$, then we have $ d(f^{-1}) = \frac{1}{d(f)}. $
\end{lem}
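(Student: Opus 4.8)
The plan is to exploit the defining relation $f^*\eta = d(f)\,\eta$ together with the contravariant functoriality of pullback on meromorphic $2$-forms. First I would record that both $f$ and its inverse are automorphisms of $X$, so by Proposition \ref{P:fixC} each of them fixes the anticanonical curve $C$. Consequently each pulls the distinguished form $\eta$ — the unique, up to scalar, meromorphic $2$-form with a simple pole along $C$ — back to a scalar multiple of itself. Concretely, $(f^{-1})^*\eta$ is again a meromorphic $2$-form whose only pole is simple and lies along $f(C)=C$, so by the uniqueness of $\eta$ it must equal $d(f^{-1})\,\eta$; this is exactly what the definition of the determinant records for $f^{-1}$.

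The key step is then a single computation. Since $f\circ f^{-1} = \mathrm{id}_X$ and pullback reverses the order of composition, I would write
\[ \eta \;=\; (\mathrm{id}_X)^*\eta \;=\; (f\circ f^{-1})^*\eta \;=\; (f^{-1})^*\bigl(f^*\eta\bigr) \;=\; (f^{-1})^*\bigl(d(f)\,\eta\bigr) \;=\; d(f)\,d(f^{-1})\,\eta. \]
Comparing the two ends and using $\eta \neq 0$ gives $d(f)\,d(f^{-1}) = 1$, that is, $d(f^{-1}) = 1/d(f)$.

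There is no substantial obstacle here: the identity is a formal consequence of functoriality and of the one-dimensionality of the space of meromorphic $2$-forms with a simple pole along $C$. The only point deserving a word of justification is that the \emph{same} form $\eta$ may be used for both $f$ and $f^{-1}$, and that each map merely scales it; this is guaranteed by the uniqueness clause in the definition of $\eta$ together with the fact (Proposition \ref{P:fixC}) that every automorphism of $X$ preserves $C$. Once this is in place, the scalars $d(f)$ and $d(f^{-1})$ are genuine multiplicative invariants and the displayed chain of equalities closes the argument.
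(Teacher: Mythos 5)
Your proof is correct; the paper states this lemma with no proof at all (``Clearly, we have''), and your argument via $(f\circ f^{-1})^*\eta = (f^{-1})^*(f^*\eta) = d(f)\,d(f^{-1})\,\eta$ is exactly the standard justification being left implicit. The one point you rightly flag --- that $f^{-1}$ also preserves $C$ and hence scales the same $\eta$ --- is indeed the only thing needing a word, and Proposition \ref{P:fixC} covers it.
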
 

\begin{lem}\label{L:det_prod}
If $f,g \in \text{Aut}(X)$ then the determinant of $f \circ g \in \text{Aut}(X)$ is the product $d(f) d(g)$ of determinants of $f$ and $g$. 
\end{lem}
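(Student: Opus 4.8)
The plan is to use the defining property of the determinant via the action on the meromorphic form $\eta$. Recall from the definition that $\eta$ is the unique (up to scalar) meromorphic $2$-form on $X$ with a simple pole along the anticanonical curve $C$, and that for any $f \in \text{Aut}(X)$ preserving $C$ we have $f^* \eta = d(f)\, \eta$. By Proposition \ref{P:fixC}, every automorphism in $\text{Aut}(X)$ fixes $C$, so this relation applies to $f$, to $g$, and to the composition $f \circ g$.

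The key computation is then purely functorial. First I would record that for automorphisms the pullback on forms is contravariantly multiplicative, namely $(f \circ g)^* = g^* \circ f^*$. Applying this to $\eta$ gives
\[
(f \circ g)^* \eta = g^*\bigl( f^* \eta \bigr) = g^* \bigl( d(f)\, \eta \bigr) = d(f)\, g^* \eta = d(f)\, d(g)\, \eta.
\]
On the other hand, by the defining relation applied directly to the composition $f \circ g \in \text{Aut}(X)$, we have $(f\circ g)^* \eta = d(f\circ g)\, \eta$. Comparing the two expressions and using that $\eta \ne 0$ (so it may be cancelled as a nonzero element of a one-dimensional space) yields $d(f \circ g) = d(f)\, d(g)$, which is exactly the claim.

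There is essentially no obstacle here: the only point requiring a word of care is the scalar $d(f)$ being a genuine constant that pulls out of $g^*$, which is immediate since $g^*$ is $\mathbb{C}$-linear on the space of meromorphic forms, and the fact that the space of such forms with a simple pole along $C$ is one-dimensional, guaranteeing that the eigenvalue relation $(f\circ g)^*\eta = c\,\eta$ determines $c$ uniquely. Both facts are already built into the definition of the determinant recalled above, so the proof is a two-line verification once the multiplicativity of pullback is invoked.
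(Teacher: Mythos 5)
Your proof is correct. It differs mildly from the paper's: you work directly from the defining eigenvalue relation $f^*\eta = d(f)\,\eta$ on the meromorphic anticanonical form and invoke contravariant functoriality of pullback, $(f\circ g)^* = g^*\circ f^*$, whereas the paper instead composes the restriction maps on the invariant cubic, using $(f\circ g)|_C = f|_C\circ g|_C$ together with the fact that each restriction has the form $(t,i)\mapsto (d(\cdot)\,t+b,\tau(i))$, so the linear coefficients multiply. Both arguments rest on the same prerequisite (Proposition \ref{P:fixC}, that every automorphism of $X$ fixes $C$, so that $d$ is defined for $f$, $g$, and $f\circ g$) and both are one-line verifications; yours has the small advantage of not needing the explicit affine normal form of the restriction, while the paper's stays entirely inside the one-dimensional dynamics on $C$ that it uses elsewhere. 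Either way the conclusion $d(f\circ g)=d(f)\,d(g)$ follows immediately.
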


\begin{proof}
From Proposition \ref{P:fixC}, we see that $f, g$ and $f\circ g$ fix the cubic $C$. Thus the restriction maps satisfy $(f\circ g)|_C = f|_C \circ g|_C$ and $d(f\circ g) = d(f) d(g)$. 
\end{proof}

%

Since the characteristic polynomial of $f^*|_{Pic(X)}$ is given by a product of a Salem polynomial and cyclotomic polynomials, it follows that 
\begin{lem}\label{L:dyn_prod}
Suppose $f$ and $g$ are automorphisms on $X$ with dynamical degrees $\delta(f)\ge \delta(g) >1$. If determinants of $f$ and $g$ are $d(f)$ and $d(g)$ respectively, then the dynamical degree $\delta (f\circ g)$ of $f\circ g$ is either $\delta(f) \delta(g)$, or $\delta(f)/\delta(g)$. Furthermore, by considering $g^{-1}$ we can get both dynamical degree $\delta(f) \delta(g)$, and $\delta(f)/\delta(g)$.
\end{lem}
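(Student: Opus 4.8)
The plan is to combine the multiplicativity of the determinant with the rigidity of Salem numbers. To begin, since $\delta(f)\ge\delta(g)>1$ both dynamical degrees are Salem numbers, and by Lemma~\ref{L:det_prod} the determinant is multiplicative, $d(f\circ g)=d(f)d(g)$. The key preliminary fact is that on this surface every determinant is a real power of one fixed Salem number: each generator $f_\sigma$ restricts to $C$ by $(t,i)\mapsto(\alpha t+(1-\alpha),\tau(i))$ with $\alpha=\delta$ the real dynamical degree, so $d(f_\sigma)=\delta$, and by multiplicativity together with $d(h^{-1})=1/d(h)$ every $h\in\text{Aut}(X)$ satisfies $d(h)=\delta^{m}$ for some $m\in\mathbb{Z}$. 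In particular $d(h)$ is real of modulus $\ne 1$ whenever $m\ne 0$.

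Next I would identify $d(f)$ and $d(g)$ precisely. The determinant $d(f)$ is an eigenvalue of $f^*|_{Pic(X)}$, and since it is not a root of unity it is a Galois conjugate of the Salem number $\delta(f)$. Because the characteristic polynomial of $f^*$ is a Salem polynomial times cyclotomic factors, the only eigenvalues lying off the unit circle are $\delta(f)$ and $1/\delta(f)$; as $d(f)=\delta^{m}$ is real of modulus $\ne 1$, this forces $d(f)\in\{\delta(f),1/\delta(f)\}$, say $d(f)=\delta(f)^{\epsilon_f}$ with $\epsilon_f\in\{+1,-1\}$. The identical argument gives $d(g)=\delta(g)^{\epsilon_g}$.

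The conclusion then follows from a short case analysis. By multiplicativity $d(f\circ g)=\delta(f)^{\epsilon_f}\delta(g)^{\epsilon_g}$, a positive real number equal to $(\delta(f)\delta(g))^{\pm1}$ when $\epsilon_f=\epsilon_g$ and to $(\delta(f)/\delta(g))^{\pm1}$ when $\epsilon_f\ne\epsilon_g$, using $\delta(f)\ge\delta(g)$. Applying the same dichotomy to $(f\circ g)^*$, whose only off-unit-circle eigenvalues are $\delta(f\circ g)^{\pm1}$, the eigenvalue $d(f\circ g)$, provided it is not $1$, must satisfy $\delta(f\circ g)=\max\{d(f\circ g),d(f\circ g)^{-1}\}$. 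This gives $\delta(f\circ g)=\delta(f)\delta(g)$ in the equal-sign cases and $\delta(f\circ g)=\delta(f)/\delta(g)$ in the opposite-sign cases. For the final clause, replacing $g$ by $g^{-1}$ changes $d(g)=\delta(g)^{\epsilon_g}$ into $\delta(g)^{-\epsilon_g}$, flipping $\epsilon_g$ and thereby interchanging the two products, so both $\delta(f)\delta(g)$ and $\delta(f)/\delta(g)$ are realized.

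The step I expect to be the main obstacle is guaranteeing that $d(f)$ lies off the unit circle, so that Salem rigidity pins it to $\delta(f)^{\pm1}$ rather than to one of the complex unit-modulus conjugates of $\delta(f)$; this is exactly what the observation $d(h)=\delta^{m}$ secures. I would also treat with care the degenerate sub-case $\delta(f)=\delta(g)$ with $\epsilon_f=-\epsilon_g$, where $d(f\circ g)=1$ is a root of unity and the formula $\delta(f\circ g)=\delta(f)/\delta(g)=1$ simply records that $f\circ g$ has zero entropy.
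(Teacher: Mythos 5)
Your argument has a genuine circularity at its foundation. The entire case analysis rests on the preliminary claim that every $h\in\text{Aut}(X)$ satisfies $d(h)=\delta^{m}$ for some integer $m$, which you justify ``by multiplicativity'' from the determinants of the generators $f_\sigma$. But that justification presupposes that every automorphism of $X$ is a word in the $f_\sigma$ --- and that is precisely the content of Theorem \ref{T:autgroup}, which the paper proves \emph{using} this lemma (via Theorem \ref{T:dpower} and Proposition \ref{P:linearaut}). At the point where Lemma \ref{L:dyn_prod} is needed, one knows nothing about an arbitrary $f\in\text{Aut}(X)$ beyond the fact that $d(f)$ is an eigenvalue of $f^*|_{Pic(X)}$ and hence a Galois conjugate of the Salem number $\delta(f)$; it could perfectly well be one of the non-real, unit-modulus conjugates. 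Your whole mechanism --- pinning $d(f)$ to $\delta(f)^{\pm1}$ and reading off signs $\epsilon_f,\epsilon_g$ --- collapses in that case. A secondary problem: even for the generators, your assertion $d(f_\sigma)=\delta$ is not warranted. Proposition \ref{P:param} only fixes $\alpha$ to be \emph{a} non-cyclotomic root of $\chi_5$, and the paper's own Proposition \ref{P:tt} explicitly entertains the possibility that $d(f_\sigma)$ is one of the non-real roots $\omega,\bar\omega$.

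The paper's proof avoids all of this by working at the level of Galois conjugates rather than trying to identify the determinants as real numbers. Writing the minimal polynomial of $d(f)$ as $\prod_i(t-\alpha_i)$ and that of $d(g)$ as $\prod_j(t-\beta_j)$ (each a Salem polynomial, so with exactly two real roots off the unit circle), the minimal polynomial of $d(f\circ g)=d(f)d(g)$ divides $\prod_{i,j}(t-\alpha_i\beta_j)$. Since $d(f\circ g)$ is a Galois conjugate of the real number $\delta(f\circ g)\ge 1$, that dynamical degree must appear among the real products $\alpha_i\beta_j$ of modulus $\ge 1$, and the only candidates are $\delta(f)\delta(g)$ and $\delta(f)/\delta(g)$. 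This reasoning is valid even when $d(f)$ and $d(g)$ lie on the unit circle, which is exactly the situation your approach cannot handle. For the last clause the paper also argues differently (via the shared leading eigenvector $v_+$ of $f^*$ and $g^*$ when $\delta(f\circ g)=\delta(f)\delta(g)$), rather than by flipping a sign $\epsilon_g$ that has not been legitimately defined. To repair your write-up you would need to either delete the $d(h)=\delta^m$ claim and redo the case analysis allowing unit-modulus determinants, or follow the paper's conjugate-product argument.
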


\begin{proof}
Since the determinant $d(f)$ is a Galois conjugate of the dynamical degree $\delta (f)$ and the dynamical degree is a Salem number, the minimal polynomial $P_f(t)$ of $d(f)$ is given by a reciprocal poly $P_f(t) = \prod (t- \alpha_i)$ with all but two $\alpha_i$ are non-real complex numbers with modulus $1$. Similarly, the minimal polynomial of $d(g)$ is given by $P_g(t) = \prod (t-\beta_j)$ with only two $\beta_j$ are real. 
The minimal polynomial $P_{f\circ g}$ of the product $d(f) d(g)$ must divide the polynomial $\prod( t- \alpha_i \beta_j)$ \[ P_{f\circ g}(t) | \prod_{i,j}( t- \alpha_i \beta_j). \]Since only four real numbers in $\{ \alpha_i \beta_j \}$ are $\delta(f) \delta(g)$, $\delta(f)/\delta(g)$, and their reciprocals. By Lemma \ref{L:det_prod}, we know that the product $d(f) d(g)$ must be a Galois conjugate of the dynamical degree $\delta(f\circ g)$ of the composition. Since the dynamical degree is a real number $\ge 1$, we see that the dynamical degree $\delta (f\circ g)$ of $f\circ g$ is either $\delta(f) \delta(g)$ or $\delta(f)/\delta(g)$.  
\vspace{1ex}
If $\delta(f \circ g) = \delta(f) \delta(g)$ then both $f^*$ and $g^*$ have the same eigenvector $v_+$ corresponding the largest eigenvalues $\delta(f), \delta(g)$. It follows that $f$ and $g^{-1}$ will not share the same eigenvector corresponding to the largest eigenvalues, and thus $\delta (f \circ g^{-1}) \ne \delta(f) \delta(g^{-1})$. Since the only possible options are $\delta(f) \delta(g^{-1})$ and $\delta(f)/\delta(g^{-1})$ and $\delta(g) = \delta(g^{-1})$, we have that $\delta (f \circ g^{-1}) = \delta(f)/\delta(g)$. 
\end{proof}

\begin{thm}\label{T:dpower}
If $f \in \text{Aut}(X)$ is an automorphism on $X$, then the dynamical degree $\delta(f)$ of $f$ is $\delta^n$ for some $n \ge 0$ where $\delta$ is the largest real root of the Salem polynomial \[ \chi_5(t) = t^4-2 t^3+t^2-2 t+1 \]
\end{thm}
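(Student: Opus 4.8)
The plan is to show that the set $\mathcal{D} = \{\delta(f) : f \in \text{Aut}(X),\ \delta(f) > 1\}$ of dynamical degrees exceeding $1$ is exactly $\{\delta^n : n \ge 1\}$; combined with the trivial case $\delta(f) = 1 = \delta^0$ this yields the theorem. Every element of $\mathcal{D}$ is a Salem number, being the spectral radius $>1$ of $f^* \in W_{15}$, and one of the six generators, say $f_{id}$, has $\delta(f_{id}) = \delta$, so $\delta \in \mathcal{D}$.

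First I would fix an arbitrary $f \in \text{Aut}(X)$ with $\delta(f) = \alpha > 1$ and, assuming $\alpha \neq \delta$, apply Lemma \ref{L:dyn_prod} to the pair $f$ and $f_{id}$ (ordered so the larger dynamical degree comes first). The lemma shows that composing with $f_{id}$ or $f_{id}^{-1}$ realizes both $\alpha\delta$ and $\alpha/\delta$ as dynamical degrees of automorphisms of $X$; in particular $\alpha\delta > 1$ is the dynamical degree of an automorphism with positive entropy, hence a Salem number. Now I would invoke Proposition \ref{P:prodSalem} for the distinct Salem numbers $\alpha$ and $\delta$: since their product $\alpha\delta$ is Salem, alternative (2) is excluded, so alternative (1) holds, giving a Salem number $\tau$ and integers $a,b \ge 1$ with $\alpha = \tau^a$ and $\delta = \tau^b$. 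By Proposition \ref{P:Salem2}(2) we have $\mathbb{Q}(\tau) = \mathbb{Q}(\delta)$, so $\tau$ is a Salem number in $K = \mathbb{Q}(\delta)$.

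The remaining and most delicate point is to prove that $\delta$ is \emph{primitive}, i.e. not a proper power of a smaller Salem number; once this is done, $\delta = \tau^b$ forces $b = 1$, hence $\tau = \delta$ and $\alpha = \delta^a$, completing the argument. By Proposition \ref{P:Salem2}(3) the Salem numbers in $K$ are exactly the powers of a single smallest one $\tau_1$, so it suffices to show $\tau_1 = \delta$. I would descend to the totally real index-$2$ subfield of $K$: dividing the Salem factor $\varphi(t) = t^4 - 2t^3 + t^2 - 2t + 1$ by $t^2$ and substituting $u = t + t^{-1}$ shows $\delta + \delta^{-1}$ satisfies $u^2 - 2u - 1 = 0$, so $\delta + \delta^{-1} = 1 + \sqrt{2}$ and the totally real subfield is $\mathbb{Q}(\sqrt 2)$ with ring of integers $\mathbb{Z}[\sqrt 2]$. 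If $\delta = \tau_1^n$, then by Proposition \ref{P:Salem2}(1) the number $\alpha_1 := \tau_1 + \tau_1^{-1}$ is a totally real algebraic integer in $\mathbb{Z}[\sqrt 2]$ with $\alpha_1 > 2$ and its nontrivial conjugate in $(-2,2)$; a short check over $\mathbb{Z}[\sqrt2]$ shows the smallest such element is $1 + \sqrt 2$, so $\alpha_1 \ge 1 + \sqrt 2$. On the other hand the Chebyshev-type identity $\tau_1^n + \tau_1^{-n} = P_n(\alpha_1)$, with $P_n$ the monic integer polynomial satisfying $P_n(x + x^{-1}) = x^n + x^{-n}$, gives $P_n(\alpha_1) = \delta + \delta^{-1} = 1 + \sqrt 2$, while for $n \ge 2$ one has $P_n(\alpha_1) = \tau_1^n + \tau_1^{-n} > \tau_1 + \tau_1^{-1} = \alpha_1 \ge 1 + \sqrt 2$, a contradiction. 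Hence $n = 1$ and $\tau_1 = \delta$.

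I expect the main obstacle to be precisely this primitivity step: the reductions through Lemma \ref{L:dyn_prod} and Proposition \ref{P:prodSalem} are essentially formal, but ruling out that $\delta$ is a genuine power requires descending to $\mathbb{Q}(\sqrt 2)$ and combining the integrality constraint with the monotonicity of $x \mapsto x + x^{-1}$ encoded in the $P_n$. One should also verify the boundary bookkeeping (the case $\alpha = \delta$, and that $\alpha\delta > 1$ so the product is genuinely Salem rather than a root of unity), but these are routine.
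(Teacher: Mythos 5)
Your proposal is correct, and its first half coincides with the paper's: both use Lemma \ref{L:dyn_prod} to realize $\delta(f)\cdot\delta$ (and $\delta(f)/\delta$) as dynamical degrees of automorphisms of $X$, hence as Salem numbers, and then invoke Proposition \ref{P:prodSalem} to produce a common Salem root $\tau$ with $\delta=\tau^b$ and $\delta(f)=\tau^a$. Where you genuinely diverge is in forcing $b=1$. The paper applies Lemma \ref{L:dyn_prod} repeatedly (a Euclidean-algorithm step on the exponents) to manufacture an automorphism $h$ of $X$ with $\delta(h)=\tau$, and then rules out $\tau=\delta^{1/p}$ for $p\ge 2$ by two external inputs: the Lehmer bound $\delta(h)\ge\lambda_\star$ from $h^*\in W_{15}$ kills $p\ge 4$, and a direct check that the minimal polynomials of $\delta^{1/2}$ and $\delta^{1/3}$ are not Salem polynomials kills $p=2,3$. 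You instead prove outright that $\delta$ is the fundamental Salem number of $K=\mathbb{Q}(\delta)$, by descending to the totally real subfield $\mathbb{Q}(\sqrt2)$, computing $\delta+\delta^{-1}=1+\sqrt2$, verifying that $1+\sqrt2$ is the smallest element of $\mathbb{Z}[\sqrt2]$ that is $>2$ with conjugate in $(-2,2)$, and using the monotonicity $\tau_1^n+\tau_1^{-n}>\tau_1+\tau_1^{-1}$ for $n\ge 2$. I checked the arithmetic ($u^2-2u-1=0$, the minimality of $1+\sqrt2$, and the inequality $(x^{n-1}-1)(x-x^{-n})>0$ for $x>1$) and it holds. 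Your route buys a cleaner and more self-contained conclusion: it needs neither the Lehmer bound nor the slightly delicate ``iterate Lemma \ref{L:dyn_prod} to reach $\tau^{\gcd(a,b)}$'' step, and it establishes the stronger fact that $\delta$ is not a proper power of \emph{any} Salem number, which immediately gives $\tau=\delta$. The paper's route is shorter to state but leans on numerical computation and on the membership $h^*\in W_{15}$; yours trades that for a small amount of algebraic number theory in $\mathbb{Q}(\sqrt2)$.
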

\begin{proof}
Recall that there are six automorphisms $f_\sigma:X \to X, \sigma \in S_3$ constructed in Theorem \ref{T:six} and $\delta(f_\sigma) = \delta$. Suppose there is an automorphism $g \in \text{Aut}(X)$ such that $\delta(g) \ne \delta^n$ for all $n\ge 0$. The composition $f_\sigma \circ g \in \text{Aut}(X)$. Furthermore due to Lemma \ref{L:dyn_prod}, the dynamical degree $\delta(f_\sigma \circ g)$ is either $\delta \cdot \delta(g)$, $\delta / \delta(g)$, or $\delta(g)/\delta$. If there is no Salem number $\tau$ such that $\delta = \tau^p$ and $\delta(g) = \tau^q$ for some $p,q \ge 1$, then none of them is Salem number due to Proposition \ref{P:prodSalem}. This is a contradiction since the dynamical degree of an automorphism of a rational surface is either $1$ or a Salem number. 

Now suppose there is a Salem number $\tau$ such that $\delta = \tau^p$ and $\delta(g) = \tau^q$ for some $p,q \ge 1$. If $p=1$, then $ \delta(g) = \delta^q$. If $p>1$, applying the Lemma \ref{L:dyn_prod} repeatedly, we get an automorphism $h \in \text{Aut}(X)$ such that $\delta(h) = \tau$. Note that since $h$ is an automorphism on $X$, $h^* \in W_{15}$ and thus $\delta(h) \ge \lambda_\star$, the Lehmer's number \cite{McMullen:2002}. Direct computation shows that $\delta^{1/4} \approx 1.17145 < \lambda_\star$ and the minimal polynomials for $\delta ^{1/2}$ and $\delta^{1/3}$ have two or more real roots bigger than $1$. 
It follows that $p =1$ and therefore $\delta(g) = \delta^n$ for some $n\ge 0$. 
\end{proof}

%
%
%

\subsection{The Automorphism group} 

\begin{prop}\label{P:identityonPic}
Suppose $f\in \text{Aut}(X)$. If the induced action $f^*: Pic(X) \to Pic(X)$ is the identity, then $f$ itself is an identity map. \[ f^* \ =\ Id \quad \Leftrightarrow \quad f\ =\ Id.\] 
\end{prop}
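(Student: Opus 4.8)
The plan is to prove the nontrivial direction: assuming $f^* = \mathrm{Id}$ on $Pic(X)$, deduce that $f$ is the identity automorphism. The key observation is that $f^* = \mathrm{Id}$ forces $f$ to descend to a linear map on $\mathbf{P}^2$ that preserves the base locus $B$ of $15$ points pointwise, and such a linear map must be the identity. First I would note that since $f^* e_0 = e_0$, the birational map $\check f$ covered by $f$ has degree $1$ (it does not raise the degree of a generic line), so $\check f$ is in fact an automorphism of $\mathbf{P}^2$, i.e.\ a linear map $T \in \mathrm{Aut}(\mathbf{P}^2)$. Indeed, a quadratic or higher-degree Cremona map would satisfy $f^*e_0 = d\, e_0 - (\text{exceptional terms})$ with $d \ge 2$, contradicting $f^* e_0 = e_0$.

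Next I would use that $f^*$ fixes every exceptional class $e_i$, $i = 1, \dots, 15$. Since each $e_i = [\mathcal{E}_i]$ is the class of the exceptional divisor over the base point $p_{i,j} \in B$, and $f = $ (lift of $T$), the condition $f^* e_i = e_i$ means $T$ sends the blown-up point corresponding to $e_i$ to itself. Concretely, $T$ must fix each of the $15$ points of $B \subset \mathbf{P}^2$ (as a set-map fixing each point individually, since the classes are permuted by $f^*$ exactly as the points are permuted by $T$, and here the permutation is trivial). Here I would invoke Proposition \ref{P:fixC}: $f$ fixes the cubic $C$, so $T$ preserves $C = L_1 \cup L_2 \cup L_3$; moreover $T$ fixes each of the $15$ points in $B$, and these points lie on the three lines with five on each line.

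The concluding step is purely about linear algebra in $\mathrm{PGL}_3$: a projective linear map fixing sufficiently many points in general enough position must be the identity. Since each line $L_i$ contains five fixed points of $T$ (which are distinct), and three distinct fixed points on a line already force $T$ to fix that line pointwise, $T$ fixes all three lines $L_1, L_2, L_3$ pointwise. A projective transformation fixing three non-concurrent lines pointwise—or equivalently fixing the singular point $C_{sing} = [1:0:0]$ together with two further independent fixed directions on each line—is the identity in $\mathrm{PGL}_3$. Therefore $T = \mathrm{Id}$ on $\mathbf{P}^2$, and since $f$ is the lift of the identity fixing the entire base locus, $f = \mathrm{Id}$ on $X$.

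The main obstacle I anticipate is making rigorous the passage from $f^* e_i = e_i$ to ``$T$ fixes the point $p_{i,j}$'' in a way that correctly handles the identification of exceptional classes with base points; one must argue that, because $f^*$ acts trivially on the geometric basis, the induced permutation of the $15$ centers of blowup is trivial, so $T$ genuinely fixes each point rather than merely permuting classes within an eigenspace. Once that is pinned down, the linear-algebra conclusion is routine, since five distinct collinear fixed points on each of three concurrent lines vastly over-determine an element of $\mathrm{PGL}_3$ and force it to be the identity.
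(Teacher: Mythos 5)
Your proposal is correct and follows essentially the same route as the paper's proof: the underlying birational map must fix all $15$ base points and the three lines, so each line contains five fixed points and is therefore pointwise fixed, forcing the map to be the identity. The only additions are your explicit degree argument via $f^*e_0=e_0$ (a useful clarification the paper leaves implicit) and a minor slip in saying ``non-concurrent'' lines where the lines here are concurrent --- harmless, since two concurrent lines fixed pointwise already force the identity in $\mathrm{PGL}_3$.
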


\begin{proof}
If $f^* = Id$, then the corresponding birational map $\check f$ on $\mathbf{P}^2$ must fix all $15$ distinct base points and the three lines joining at one point. Since each line contains $5$ base points, those three lines are, in fact, lines of fixed points for $\check f$. Thus, we have $\check f = Id$ and $f=Id$.
\end{proof}

%
%

\begin{prop}\label{P:tt} Let $\delta$ is the largest real root of $\varphi(t)$ defined in (\ref{E:salemfactor}). If an automorphism $f$ on $X$ has the dynamical degree $\delta(f) = \delta$ then 
\[ f \in \{ f_\sigma, f^{-1}_\sigma: \sigma \in S_3\} \]
where $f_\sigma$ is the lift $\check f_\sigma$ defined in (\ref{E:sixaut}) \end{prop}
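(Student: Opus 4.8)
The plan is to show that any $f\in\text{Aut}(X)$ with $\delta(f)=\delta$ is forced, after possibly replacing $f$ by $f^{-1}$, to be a \emph{basic quadratic} map properly fixing $C$ with determinant $\delta$, and then to invoke the uniqueness built into Diller's construction (Theorem \ref{T:bdk}, Theorem \ref{T:six}) to conclude $f=f_\sigma$ for exactly one $\sigma\in S_3$. First I would record the standing reductions. By Proposition \ref{P:fixC} the map $f$ fixes the cubic $C$; since $\delta(f)=\delta>1$ the automorphism $f$ is not linear, so by Proposition \ref{P:nofix} no component of $C$ is a curve of fixed points. Writing the restriction as $f|_C:(t,i)\mapsto (d(f)\,t+b,\tau(i))$ for some $\tau\in S_3$, the scalar $d(f)$ is the determinant of $f$, and by the cited theorem of McMullen--Uehara it is an eigenvalue of $f^*|_{Pic(X)}$.

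Next I would pin down $d(f)$. The characteristic polynomial of $f^*$ is the product of the Salem factor $\varphi$ in (\ref{E:salemfactor}) and cyclotomic factors, so every eigenvalue of $f^*$ is either a root of $\varphi$ (namely $\delta$, $\delta^{-1}$, or one of the two conjugate unit-modulus roots) or a root of unity. I would exclude the remaining possibilities by a finiteness argument: if $d(f)$ were a root of unity of order $m$, then $f^m$ would be a non-linear automorphism fixing $C$ with determinant $1$, acting on each line as a translation $t\mapsto t+b'$; but the indeterminacy points of a non-linear lift, together with their forward orbits, must lie in the finite base locus $B\subset C$, which is impossible for a non-trivial translation, while $b'=0$ forces a line of fixed points, contradicting Proposition \ref{P:nofix}. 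The same orbit-finiteness argument rules out a unit-modulus irrational $d(f)$, whose orbits on $C$ would be infinite. Hence $d(f)\in\{\delta,\delta^{-1}\}$, and replacing $f$ by $f^{-1}$ (which has determinant $d(f)^{-1}$) I may assume $d(f)=\delta$.

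Now comes the crux. Set $h=f\circ f_{id}^{-1}\in\text{Aut}(X)$, where $f_{id}$ is the map in (\ref{E:sixaut}) with $\tau=\sigma=id$ and $d(f_{id})=\delta$. By Lemma \ref{L:det_prod}, $d(h)=\delta\cdot\delta^{-1}=1$, and by the argument just given a determinant-one automorphism cannot have positive entropy, so $\delta(h)=1$. I would then show that $h$ is \emph{linear}: if $h$ were non-linear its indeterminacy locus lies in $B$, the translation constant must vanish as above, and $h$ restricts to $C$ as $(t,i)\mapsto(t,\tau_h(i))$; Proposition \ref{P:nofix} forces $\tau_h$ to be a fixed-point-free $3$-cycle, whence $h^3$ fixes $C$ pointwise and must itself be linear, so $h^3=id$. \emph{Excluding such a non-linear order-$3$ element is the heart of the matter}: its orbit structure has constant parameter along $C$ and therefore cannot reproduce the five distinct parameters $1+t_j$, which by (\ref{E:tsol}) form a geometric progression with ratio $\delta$, that the resolution of any non-linear element of $\text{Aut}(X)$ must blow up; this contradiction shows $h$ is linear. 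Consequently $f=h\circ f_{id}=(h\,\mathcal{T}_{id})\circ J_3\circ\mathcal{S}^{-1}$ is basic quadratic, properly fixes $C$, has determinant $\delta$, and has line-permutation $\tau=\tau_h$.

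Finally, with $\check f$ basic quadratic properly fixing $C$, Lemma \ref{L:basic} gives exactly one indeterminacy point per line; the determinant $\delta$ together with the scaling $t_{j+1}=\delta\,t_j$ forces every exceptional line to have orbit length $5$, so the orbit data is $5,5,5$ with $\sigma=\tau^5$ as in Lemma \ref{L:orbitdata}. Theorem \ref{T:bdk} then says the basic map with these data is unique, so $f=f_\sigma$ for the corresponding $\sigma$; undoing the reduction, the case $d(f)=\delta^{-1}$ yields $f^{-1}=f_\sigma$, i.e. $f=f_\sigma^{-1}$. This gives $f\in\{f_\sigma,f_\sigma^{-1}:\sigma\in S_3\}$. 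The step I expect to be the main obstacle is the italicized one---ruling out a non-linear finite-order factor $h$---since that is precisely where the arithmetic of the base locus $B$, rather than soft lattice-theoretic input, must be used.
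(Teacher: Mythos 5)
Your skeleton is the same as the paper's: pin down the determinant $d(f)$ among the roots of $\varphi$, compose with a generator to produce an automorphism of determinant $1$, show that composition is linear, and then quote the classification of the six quadratic maps (Theorem \ref{T:exactlysix}). But both of your key exclusion steps rest on arguments that do not hold up. First, the ``orbit-finiteness'' argument you use to force $d(f)\in\{\delta,\delta^{-1}\}$ and to kill roots of unity is not valid: the orbits relevant to lifting are the finite chains $p_i^-\mapsto \check f(p_i^-)\mapsto\cdots\mapsto p_{\sigma(i)}^+$ that \emph{terminate} at indeterminacy points, so nothing forces an infinite orbit inside $B$. Indeed, a fixed-point-free affine map of the parameter can map the $15$ points of $B$ into $B$ along exactly such chains --- that is precisely the combinatorics of the actual generators $f_\sigma$ --- so ``a non-trivial translation is impossible'' is false as stated. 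Worse, the conclusion itself is not available in general: the construction in Section \ref{SS:sixaut} only takes $\alpha$ to be a non-cyclotomic root of $\chi_5$, so $d(f_\sigma)$ may be one of the two non-real unit-modulus roots of $\varphi$; the paper's proof deliberately splits into the cases $d(f_\sigma)$ real and non-real and proves only $d(f)\in\{d(f_\sigma),1/d(f_\sigma)\}$, which is all that is needed. The correct mechanism is the determinant calculus: by Lemma \ref{L:det_prod} the determinant $d(f\circ f_\sigma)=d(f)d(f_\sigma)$ must again be a root of unity or a Galois conjugate of a Salem number, and a product such as $\omega\cdot\delta$ (non-real, off the unit circle) is neither.

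Second, your ``heart of the matter'' --- excluding a non-linear factor $h$ with $d(h)=1$ because its constant-parameter orbits ``cannot reproduce the five distinct parameters $1+t_j$ that the resolution of any non-linear element of $\mathrm{Aut}(X)$ must blow up'' --- assumes that a non-linear automorphism of $X$ must have indeterminacy orbits exhausting all of $B$. That is unjustified: to lift to $X=B\ell_B(\mathbf{P}^2)$ it suffices that the indeterminacy orbits lie \emph{inside} $B$ and that the remaining points of $B$ be permuted, and a parameter-preserving map does permute $B$. The paper closes this case differently: determinant $1$ forces $(f\circ f_\sigma)^n=\mathrm{Id}$ for some $n$, hence the translation part $\beta$ vanishes, hence a component of $C$ (after passing to a power) is a curve of fixed points, which Proposition \ref{P:nofix} forbids for a non-linear automorphism; therefore $f\circ f_\sigma$ is linear and $f=(f\circ f_\sigma)\circ f_\sigma^{-1}$ is quadratic, at which point Theorem \ref{T:exactlysix} finishes. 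If you want to salvage your version, you would need to replace both italicized steps by the determinant/Salem-conjugate constraint and by Proposition \ref{P:nofix}, i.e.\ essentially by the paper's argument.
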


\begin{proof}
Suppose $f\in \text{Aut}(X)$ is an automorphism with $\delta(f) = \delta>1$. Then the determinant $d(f)$ is a root of $\varphi(t)$. Due to the construction, for each $\sigma \in S_3$, automorphisms $f_\sigma$ have the same determinant. Furthermore, $\varphi(t)$ has two real roots $\delta, 1/\delta$, and two non-real roots $\omega, \bar \omega$ on the unit circle. We have two cases:
\begin{itemize}
\item If $d(f_\sigma)$ is real, then $d(f)$ has to be real. Otherwise, we have $d(f \circ f_\sigma) = d(f) \cdot d(f)$, which is non-real outside unit circle. It follows that the determinant $d(f \circ f_\sigma) $ of composition is not a Galois conjugate of a Salem number, and thus $f \circ f_\sigma$ is not an automorphism, which is a contradiction. Thus the determinant $d(f)$ has to be real, that is, $d(f)$ is either $\delta$ or $1/\delta$. 
\item Similarly, if $d(f_\sigma)$ is non-real, then $d(f)$ is either $\omega$ or $\bar \omega$.
\end{itemize}
Either case, we have either $d(f) = d(f_\sigma)$ or $d(f) = 1/d(f_\sigma)$ for all $\sigma \in S_3$. It follows that there is $\sigma \in S_3$ such that the composition $f\circ f_\sigma$ (or $f \circ f_\sigma^{-1}$) has determinant $1$. It follows that there is a positive integer $n$ such that $(f\circ f_\sigma)^n = Id$. If the restriction map $f\circ f_\sigma|C : t \mapsto t+\beta$, then $t+n \beta = t$. It follows that $\beta=0$ and thus $C$ is a curve of fixed points. By Proposition \ref{P:nofix}, we see that $f \circ f_\sigma$ is linear and thus $(f\circ f\sigma) \circ f_\sigma^{-1}$ is quadratic. 
The statement follows from Theorem \ref{T:exactlysix}
\end{proof}

%
%

\begin{lem}\label{L:dihedral3}
Let us set six linear maps \begin{equation}\label{E:linear} L _\sigma := f_\sigma \circ f_{id}^{-1} \qquad \text{ for } \sigma \in S_3. \end{equation}
Let $\mathcal{L}$ be the group generated by six linear maps above, then $\mathcal{L}$ is equivalent to the dihedral group $D_3$
\[ \mathcal{L} = \langle L_{(12)}, L_{(13)}, L_{(23)}, L_{(123)}, L_{(132)} \rangle \ \cong D_3. \]
\end{lem}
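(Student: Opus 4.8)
The plan is to first make the linear maps completely explicit and then recognize $\mathcal{L}$ as a permutation group acting on the three lines of the invariant cubic $C$.

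First I would simplify $L_\sigma$. Writing $\check f_\sigma = \mathcal{T}_\sigma \circ J_3 \circ \mathcal{S}^{-1}$ as in (\ref{E:sixaut}) and using that $J_3$ is an involution, one has $\check f_{id}^{-1} = \mathcal{S} \circ J_3 \circ \mathcal{T}_{id}^{-1}$, and the inner factors collapse:
\[ L_\sigma = \check f_\sigma \circ \check f_{id}^{-1} = \mathcal{T}_\sigma \circ J_3 \circ \mathcal{S}^{-1} \circ \mathcal{S} \circ J_3 \circ \mathcal{T}_{id}^{-1} = \mathcal{T}_\sigma \circ \mathcal{T}_{id}^{-1}. \]
Hence each $L_\sigma$ is a genuine linear automorphism of $\mathbf{P}^2$ (in particular $L_{id} = \mathrm{Id}$), so $\mathcal{L} \subset \mathrm{PGL}_3(\mathbb{C})$, and every element of $\mathcal{L}$ preserves both the cubic $C$ and the $15$-point base locus $B$ of (\ref{E:baselocus}).

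Next I would build the homomorphism $\rho : \mathcal{L} \to \mathrm{Sym}\{L_1, L_2, L_3\} \cong S_3$ recording the permutation a linear map induces on the three concurrent lines of $C$; this is well defined since any element of $\mathcal{L}$ fixes $C$ setwise and hence permutes its irreducible components. By Theorem \ref{T:six} the map $f_\sigma$ sends $L_i$ to $L_{\sigma^{-1}(i)}$ while $f_{id}$ fixes each line, so $L_\sigma(L_i) = L_{\sigma^{-1}(i)}$ and $\rho(L_\sigma) = \sigma^{-1}$. As $\sigma$ ranges over $S_3$ the elements $\sigma^{-1}$ exhaust $S_3$, so $\rho$ is surjective; in particular the five listed generators map onto $S_3 \setminus \{\mathrm{Id}\}$.

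It remains to prove $\rho$ is injective, which I expect to be the crux. Take $L \in \ker\rho$, so $L$ fixes each $L_i$ as a set. By Lemma \ref{L:det_prod} the determinant is multiplicative, and each generator satisfies $d(L_\sigma) = d(f_\sigma)/d(f_{id}) = 1$, whence $d(L) = 1$; therefore the restriction of $L$ to each line is an affine map $t \mapsto t + b$ with unit linear part. Since $L$ permutes the five base points of parameters $1+t_1, \dots, 1+t_5$ lying on $L_i$, comparing the sum of those parameters forces $5b = 0$, i.e. $b = 0$, so $L$ fixes $C$ pointwise. Finally a lift of $L$ to $\mathrm{GL}_3$ acts as a scalar on each of the two-dimensional subspaces cut out by two distinct lines, say $L_1$ and $L_2$; these subspaces meet (two lines in $\mathbf{P}^2$ always intersect) and together span $\mathbb{C}^3$, forcing a single common scalar and hence $L = \mathrm{Id}$ in $\mathrm{PGL}_3(\mathbb{C})$. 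Thus $\ker\rho = \{\mathrm{Id}\}$, $\rho$ is an isomorphism, and $\mathcal{L} \cong S_3 \cong D_3$. The delicate point is precisely this kernel computation: a linear map fixing each component of $C$ only setwise need not be trivial, so both the determinant constraint $d(L)=1$ and the finiteness of the base locus are essential to rule out a nontrivial translation along the lines.
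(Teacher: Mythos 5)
Your proof is correct, but it is genuinely different from the one in the paper. The paper's proof is a two-line direct computation: it observes from the explicit formulae $\check f_\sigma = \mathcal{T}_\sigma \circ J_3 \circ \mathcal{S}^{-1}$ that each $L_\sigma$ is linear (your cancellation $L_\sigma = \mathcal{T}_\sigma \circ \mathcal{T}_{id}^{-1}$ is exactly the reason) and then simply records the relations $L_{(12)}^2 = L_{(13)}^2 = L_{(23)}^2 = L_{(123)}^3 = L_{(132)}^3 = Id$ and $L_{(123)}^{-1} = L_{(132)}$, computed from the matrices. You instead construct the homomorphism $\rho : \mathcal{L} \to \mathrm{Sym}\{L_1,L_2,L_3\}$, prove surjectivity from $\tau = \sigma^{-1}$ (Theorem \ref{T:six}), and prove injectivity by combining $d(L_\sigma)=1$ (via Lemma \ref{L:det_prod}), the fact that a unit-determinant translation $t \mapsto t+b$ permuting the five base parameters on each line must have $b=0$, and the rigidity of a projective linear map fixing $C$ pointwise. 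What your route buys is completeness and a conceptual explanation: the paper's listed relations establish the orders of the generators but do not by themselves show that the group they generate is closed and is exactly $D_3$ rather than something larger (one would still need products such as $L_{(12)}\circ L_{(13)}$ to land back in the list), whereas your surjection-plus-trivial-kernel argument settles this and identifies $\mathcal{L}$ intrinsically as the permutation action on the three lines. What the paper's route buys is brevity, since the matrices are explicit and the verification is mechanical. Your kernel step does lean on two facts you assert rather than prove — that every element of $\mathcal{L}$ permutes the base locus $B$, and that all six $f_\sigma$ share the same determinant — but both are established elsewhere in the paper (the latter is stated in the proof of Proposition \ref{P:tt}), so this is a matter of citation rather than a gap.
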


\begin{proof}
The the formulae for $\check f_\sigma = \mathcal{T}_\sigma \circ J_3 \circ \mathcal{S}^{-1}$ in (\ref{E:sixaut}), it is easy to see $L_\sigma$ is a linear. In fact we have \[ L_{id} = L_{(12)}^2 = L_{(23)}^2 = L_{(13)}^2 = L_{(123)}^3=L_{(132)}^3=Id, \quad \text{and} \quad L_{(123)}^{-1}=L_{(132)}. \]
\end{proof}

The linear maps (\ref{E:linear}) defined in the previous Proposition permute three concurrent lines. If $\sigma \in S_3$, $L_\sigma$ maps the line $L_{\sigma(i)}$ to the line $L_i$.

\begin{prop}\label{P:linearaut}
Suppose $f \in \text{Aut}(X)$ is an automorphism on $X$. If the dynamical degree of $f$, $\delta(f) = 1$, then $f \in \mathcal{L} $
\end{prop}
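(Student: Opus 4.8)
The plan is to show that an automorphism $f$ with $\delta(f)=1$ must be linear, and then use Lemma \ref{L:dihedral3} to place it inside $\mathcal{L}$. First I would analyze the determinant $d(f)$. Since $\delta(f)=1$ and the dynamical degree is a Salem number or $1$, by Theorem \ref{T:dpower} we have $\delta(f)=\delta^0=1$, so $f$ has no expansion and $d(f)$ cannot be a Galois conjugate of $\delta$ with modulus $>1$; in fact $d(f)$ must be a root of unity (otherwise $d(f)$ would be a nontrivial Galois conjugate of a Salem number, forcing $\delta(f)>1$). The restriction $\check f|_C$ then acts as $(t,i)\mapsto(d(f)\,t+b,\tau(i))$ with $d(f)$ a root of unity, so some power of $f$ restricts to a translation on $C$, and combined with Proposition \ref{P:nofix} this will pin down the behavior on $C$.

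Next I would compose $f$ with a suitable element of $\mathcal{L}$ (equivalently with the $f_\sigma$'s) to reduce to the linear case. The key device is the same ``kill the determinant and the permutation'' argument used in Proposition \ref{P:tt}: because $f$ permutes the three lines by some $\tau\in S_3$ and $\mathcal{L}\cong D_3$ realizes every permutation $\tau$ (as noted right after Lemma \ref{L:dihedral3}, $L_\sigma$ sends $L_{\sigma(i)}$ to $L_i$), I can pick $L\in\mathcal{L}$ so that $L\circ f$ fixes each line $L_i$ and restricts to $C$ as $(t,i)\mapsto(d\,t+b,i)$ with $d$ a root of unity. Raising to a power gives an automorphism $g=(L\circ f)^m$ with $g|_C:(t,i)\mapsto(t+b',i)$, a pure translation. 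If $b'\neq 0$ then $g|_C$ has no fixed points on the affine part of each $L_i$, but since $\delta(g)=1$ as well, $g^*$ is of finite order or at least has no expanding direction; I would argue that a nontrivial translation forces the fifteen base points — which lie on $C$ and are permuted among themselves — to form infinite orbits under the group law, contradicting finiteness of $B$. Hence $b'=0$, so $C$ is a curve of fixed points for $g$, and Proposition \ref{P:nofix} forces $g$ to be linear.

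From linearity of $g=(L\circ f)^m$ I would descend to linearity of $f$ itself. Once $g$ is linear its action on $Pic(X)$ is induced by a genuine linear map on $\mathbf{P}^2$ preserving the fifteen base points, so $g^*$ lies in the finite subgroup of $W_{15}$ fixing $e_0$; in particular $L\circ f$ has finite order on $Pic(X)$, its determinant is a root of unity throughout, and no blow-up is needed beyond the common base locus. The cleanest route is to show directly that $L\circ f$ already preserves the class $e_0$ (a linear automorphism does not raise the degree), whence $L\circ f$ is linear and $f=L^{-1}\circ(L\circ f)\in\mathcal{L}$ since $\mathcal{L}$ is a group. The main obstacle I anticipate is the step ruling out a nonzero translation constant $b'$: I must exclude the a priori possibility that $f$ acts on $C$ as an infinite-order translation while still being an automorphism of $X$ with $\delta(f)=1$. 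The resolution should come from the fact that the base locus $B$ is finite and invariant, together with the group-law relation $t_1+t_2+t_3=0$ for collinear triples established after (\ref{E:param}) and the non-degeneracy encoded in Lemma \ref{L:nonodal}; a nonzero translation would move the finitely many base points off themselves or create a curve meeting $C$ only along $B$, both of which are impossible.
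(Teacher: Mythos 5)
Your overall strategy (force $d(f)$ to be a root of unity, kill the line permutation and the translation on $C$, then invoke Proposition \ref{P:nofix} to get linearity) runs parallel to the paper's, but the final step has a genuine gap. Knowing that $f$ (or $L\circ f$) is \emph{linear} does not place it in $\mathcal{L}$: by definition $\mathcal{L}$ is the group generated by the six specific maps $L_\sigma=f_\sigma\circ f_{id}^{-1}$, and a priori $X$ could admit linear automorphisms (projective linear maps preserving the $15$ base points) that are not among these six. Your closing line ``$f=L^{-1}\circ(L\circ f)\in\mathcal{L}$ since $\mathcal{L}$ is a group'' silently assumes $L\circ f\in\mathcal{L}$, which is exactly what needs proof. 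The paper closes this gap differently: once $\check f$ is known to be linear, $\check f\circ\check f_\sigma$ is a quadratic map properly fixing $C$ with dynamical degree $\delta$, so the uniqueness statement of Theorem \ref{T:exactlysix} forces $f\circ f_\sigma=f_\tau$ for some $\tau\in S_3$, whence $f=f_\tau\circ f_\sigma^{-1}=L_\tau\circ L_\sigma^{-1}\in\mathcal{L}$. You need this (or an independent classification of the linear automorphisms of $X$) to finish.

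Two intermediate steps also need repair. First, your argument that the translation constant $b'$ vanishes rests on the claim that $g$ ``permutes the fifteen base points among themselves''; an automorphism of $X$ only permutes $(-1)$-classes, and the exceptional divisors over $B$ may be sent to strict transforms of other curves, so this is not automatic. The paper's route (in Proposition \ref{P:tt}, which Proposition \ref{P:linearaut} invokes) is instead to show the map in question has finite order, so that $t\mapsto t+\beta$ iterated $n$ times returns $t$ and hence $\beta=0$. Second, before composing with $L\in\mathcal{L}$ and taking powers you should justify that the resulting automorphism still has determinant a root of unity and dynamical degree $1$; the paper avoids this bookkeeping by composing once with the quadratic $f_\sigma$ and running the Galois-conjugate argument on $d(f)d(f_\sigma)$, which simultaneously yields $d(f)=1$ and sets up the application of Theorem \ref{T:exactlysix}.
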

\begin{proof}
Let $P(t) = \prod (t - \alpha_i)$ be the minimal polynomial of the dynamical degree $\delta>1$ of $f_\sigma$. 
If $\delta(f) = 1 $, then the determinant of $f$ is a root of unity. Thus, the minimal polynomial of $ d(f \circ f_\sigma)=d(f) d(f_\sigma)$ divides $\prod (t- \alpha_i \beta_j)$ where $\beta_j$'s are roots of unity. For any root of unity $\beta$, Since $\alpha_i \beta$ is not a root of unity and $|\alpha_i \beta|=|\alpha_i|$. Thus we see that the dynamical degree of $f \circ f_\sigma = \delta$. On the other hand, the determinant $ d(f \circ f_\sigma)=d(f) d(f_\sigma)$ is a Galois conjugate of $\delta$ and thus $d(f) =1$. With the same argument in the proof of Proposition \ref{P:tt} $\check f$ is linear and thus $\check f \circ \check f_\sigma $ is a quadratic map with dynamical degree $\delta$. Thus, by Theorem \ref{T:exactlysix}, we have $f \circ f_\sigma = f_\tau$ for some $\tau \in S_3$. Thus \[ f= f_\tau \circ f_\sigma^{-1} = L_\tau \circ L_\sigma^{-1} \in \mathcal{L}. \]
\end{proof}

\begin{thm}\label{T:autgroup} Let $X$ be a rational surface defined in the beginning of Section \ref{S:surface} and let $f_{id}$, $f_{(12)}$, $f_{(13)},f_{(23)},f_{(123)},$ and $f_{(132)}$ be six quadratic automorphisms on $X$ which are induced from birational maps defined in (\ref{E:sixaut}). The automorphism group $\text{Aut}(X)$ is generated by $6$ quadratic automorphisms properly fixing three lines meeting at a single point and \[ \text{Aut}(X) = \langle f_{id},f_{(12)},f_{(13)},f_{(23)}, f_{(123)}, f_{(132)} \rangle  \ \cong \ D_3 \rtimes \mathbb{Z} \]
\end{thm}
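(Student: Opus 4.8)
The plan is to prove Theorem~\ref{T:autgroup} by combining the structural results already established about $\text{Aut}(X)$: the classification of determinants via dynamical degrees (Theorem~\ref{T:dpower}), the identification of the six quadratic automorphisms with determinant $\delta$ (Proposition~\ref{P:tt}), the fact that the linear automorphisms form a copy of $D_3$ (Lemma~\ref{L:dihedral3} and Proposition~\ref{P:linearaut}), and the faithfulness of the action on $Pic(X)$ (Proposition~\ref{P:identityonPic}). The strategy is to build a short exact sequence relating the normal subgroup of linear maps $\mathcal{L} \cong D_3$ to the quotient, which I expect to be $\mathbb{Z}$, generated by the determinant.

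First I would establish that $\mathcal{L}$ is a normal subgroup of $\text{Aut}(X)$. By Proposition~\ref{P:linearaut}, $\mathcal{L}$ is precisely the set of automorphisms with dynamical degree $1$, equivalently those whose determinant is a root of unity; since $\delta(g f g^{-1}) = \delta(f)$ for any $g$, this set is conjugation-invariant, so $\mathcal{L} \lhd \text{Aut}(X)$. Next I would define the determinant homomorphism $d: \text{Aut}(X) \to \mathbb{C}^*$, which is multiplicative by Lemma~\ref{L:det_prod}. By Theorem~\ref{T:dpower} together with Proposition~\ref{P:tt}, the dynamical degree of any $f$ is $\delta^n$, and its determinant is a Galois conjugate of $\delta^n$ lying among the roots of $\varphi(t^{1/\,})$-type factors; the key point is that the image of $d$, modulo roots of unity, is the infinite cyclic group generated by the class of $\delta$. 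Concretely, I would show the composite $\text{Aut}(X) \xrightarrow{d} \mathbb{C}^* \to \mathbb{C}^*/(\text{roots of unity})$ has image $\cong \mathbb{Z}$ and kernel exactly $\mathcal{L}$. This yields a short exact sequence
\[ 1 \longrightarrow D_3 \cong \mathcal{L} \longrightarrow \text{Aut}(X) \longrightarrow \mathbb{Z} \longrightarrow 1. \]

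To upgrade this extension to a semidirect product $D_3 \rtimes \mathbb{Z}$, I would exhibit an explicit splitting: since $f_{id}$ has determinant $\delta$ (hence maps to a generator of $\mathbb{Z}$), the assignment $n \mapsto f_{id}^{\,n}$ gives a section, so the sequence splits and $\text{Aut}(X) \cong D_3 \rtimes \mathbb{Z}$. It then remains to verify that the six listed automorphisms generate everything: any $f \in \text{Aut}(X)$ has $d(f)$ a conjugate of some $\delta^n$, so after composing with $f_{id}^{-n}$ we land in $\mathcal{L}$, which by Proposition~\ref{P:linearaut} is generated by the $L_\sigma = f_\sigma \circ f_{id}^{-1}$; hence $f$ is a word in the $f_\sigma$'s. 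Faithfulness on $Pic(X)$ (Proposition~\ref{P:identityonPic}) guarantees no two distinct group elements collapse, so the abstract structure is genuinely realized.

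The main obstacle I anticipate is pinning down the quotient precisely as $\mathbb{Z}$ rather than a larger or smaller group, and correctly identifying the conjugation action of $\mathbb{Z}$ on $D_3$. The subtlety is that $d$ takes values in $\mathbb{C}^*$ and one must argue that after quotienting by roots of unity the determinants of the $f_\sigma$ all descend to the \emph{same} generator (they share a determinant by construction, so this should follow), while the \emph{real versus non-real} dichotomy in Proposition~\ref{P:tt} must be handled to ensure we are not accidentally doubling the quotient. I would also need to check that the conjugation action of $f_{id}$ on $\mathcal{L} \cong D_3$ is by an automorphism of $D_3$ (possibly nontrivial, which is consistent with the semidirect rather than direct product), using the way $f_{id}$ permutes the three concurrent lines and hence the generators $L_\sigma$; this permutation-theoretic bookkeeping, rather than any deep geometry, is where the care is required.
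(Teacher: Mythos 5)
Your proposal is correct in substance and uses the same ingredients as the paper's proof (Theorem~\ref{T:dpower}, Lemma~\ref{L:dyn_prod}, Proposition~\ref{P:linearaut}, Proposition~\ref{P:identityonPic}), but you package the argument as a short exact sequence split by $f_{id}$, whereas the paper argues more directly: it checks by computation that $L_\sigma \circ f_{id} = f_{id}\circ L_\sigma$ (so the subgroup generated by the six maps is already $D_3\rtimes\mathbb{Z}$ --- in fact a direct product, which answers your question about the conjugation action: it is trivial), and then, given arbitrary $f$ with $\delta(f)=\delta^n$, it applies Lemma~\ref{L:dyn_prod} repeatedly to strip off factors of $f_{id}^{\pm1}$ until the dynamical degree is $1$, at which point Proposition~\ref{P:linearaut} finishes. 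Two points in your version need care that the paper's phrasing sidesteps. First, your kernel identification quietly equates ``$d(f)$ is a root of unity'' with ``$\delta(f)=1$''; Proposition~\ref{P:linearaut} is stated with the hypothesis $\delta(f)=1$, and the converse implication (a positive-entropy automorphism cannot have root-of-unity determinant) is not established in the paper, so to land in $\mathcal{L}$ you should descend through dynamical degrees as the paper does rather than through determinants. Second, ``composing with $f_{id}^{-n}$'' presumes you know the sign, i.e.\ whether $\delta(f\circ f_{id}^{-1})$ equals $\delta(f)/\delta$ or $\delta(f)\cdot\delta$; this is exactly the ``choose $g$ or $g^{-1}$'' clause of Lemma~\ref{L:dyn_prod}, and when $d(f)$ is a non-real conjugate of $\delta^n$ the determinant alone does not tell you which power to use --- so you must invoke that lemma (or an extension of the dichotomy in Proposition~\ref{P:tt} to all $n$) rather than read the exponent off $d(f)$. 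With those two repairs your short-exact-sequence formulation is a clean, essentially equivalent reorganization of the paper's proof, and it buys a slightly more transparent statement of why the answer is an extension of $\mathbb{Z}$ by $D_3$.
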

\begin{proof}
Direct computation using formulae in (\ref{E:sixaut}) shows that $L_\sigma \circ f_{id} = f_{id} \circ L_\sigma$. Since $f_\sigma = L_\sigma \circ f_{id}$, it is clear that the second part $\langle f_{id},f_{(12)},f_{(13)},f_{(23)}, f_{(123)}, f_{(132)} \rangle  \ \cong \ D_3 \rtimes \mathbb{Z} $. Now suppose $f \in \text{Aut}(X)$. By Theorem \ref{T:dpower}, the dynamical degree $\delta(f) = \delta^n$ for some $n\ge 0$. Applying Lemma \ref{L:dyn_prod} repeatedly and using Proposition \ref{P:linearaut}, we have the desired result. \end{proof}

\begin{proof}[Proof of Theorem A]
The assertion of Theorem $A$ is the immediate consequence of the previous theorem \ref{T:autgroup}.
\end{proof}

\section{The Subgroup of $W_{15}$}\label{S:wsub}
\begin{thm}\label{T:Wsubgroup} Let $X = B\ell_B( \mathbf{P}^2)$ be a rational surface obtained by blowing up a set $B$ of $15$ distinct points defined in Equation \ref{E:baselocus} and let $G$ be a subgroup of the Coxeter group $W_{15}$ defined in (\ref{E:subgroupG}). Then $G$ is realized by automorphisms on $X$. In fact, the subgroup $G$ of $W_{15}$ is isomorphic to the automorphism group $\text{Aut}(X)$. 

\[ G = \langle f^*_{id},f^*_{(12)},f^*_{(13)},f^*_{(23)},  f^*_{(123)}, f^*_{(132)}  \rangle   \ \cong \ \text{Aut}(X) \]
\end{thm}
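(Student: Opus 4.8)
The plan is to establish the isomorphism $G \cong \text{Aut}(X)$ by exploiting the isomorphism already proven in Theorem \ref{T:autgroup}, namely $\text{Aut}(X) \cong D_3 \rtimes \mathbb{Z}$, together with Proposition \ref{P:identityonPic}. The crucial point is that the natural homomorphism $\Phi: \text{Aut}(X) \to W_{15}$, $f \mapsto f^*$, has image exactly $G$ by the very definition of $G$ in (\ref{E:subgroupG}), since $G$ is generated by the six elements $f_\sigma^*$. Thus it suffices to show that $\Phi$ is injective, and the surjectivity onto $G$ is automatic from the definition.

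First I would observe that injectivity of $\Phi$ is precisely the content of Proposition \ref{P:identityonPic}: if $f^* = \text{Id}$ on $Pic(X)$, then $f = \text{Id}$. Therefore the kernel of $\Phi$ is trivial, and $\Phi$ restricts to an isomorphism between $\text{Aut}(X)$ and its image. Since the image is generated by $\{f_\sigma^* : \sigma \in S_3\}$, which is exactly $G$, we conclude $\text{Aut}(X) \cong G$.

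The remaining work is to confirm that $G$ is genuinely realized by $\text{Aut}(X)$ in the sense defined in the introduction, that is, $G = \text{Aut}(X)^*$. This again follows directly: by Theorem \ref{T:autgroup} every automorphism of $X$ is a word in the six generators $f_\sigma$, so every induced action $f^*$ is the corresponding word in the $f_\sigma^*$, hence lies in $G$; conversely each generator of $G$ is realized by the automorphism $f_\sigma$. Thus $\text{Aut}(X)^* = G$, which is the realizability claim.

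The main obstacle, and the only nontrivial ingredient, is the injectivity encoded in Proposition \ref{P:identityonPic}. Everything else is formal bookkeeping: one translates the generator relations from the automorphism side to the induced-action side and invokes the first isomorphism theorem. I would emphasize in the write-up that this injectivity is what distinguishes the present construction from Blanc's example mentioned in the introduction, where the analogous map $\text{Aut}(X) \to W_X$ fails to be one-to-one. Concretely, I would close by noting that the semidirect product structure $D_3 \rtimes \mathbb{Z}$ transports verbatim across $\Phi$, so the abstract group structure of $G$ matches that of $\text{Aut}(X)$ exactly, completing the identification $G \cong \text{Aut}(X) \cong D_3 \rtimes \mathbb{Z}$.
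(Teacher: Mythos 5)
Your proposal is correct and matches the paper's own argument: the paper likewise notes that $f \mapsto f^*$ is a homomorphism whose injectivity is exactly Proposition \ref{P:identityonPic}, with surjectivity onto $G$ coming from the generation statement of Theorem \ref{T:autgroup}. You simply spell out the image computation and the realizability bookkeeping more explicitly than the paper does.
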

\begin{proof} The map sending  $f \in \text{Aut}(X)$ to the actions $f^*$ on $Pic(X)$ is a homomorphism. This map is, in fact, one-to-one due to Proposition \ref{P:identityonPic}. It follows that $G \cong \text{Aut}(X)$.

\end{proof}

\begin{proof}[Proof of Theorem B]
The statement of Theorem $B$ is clear from the previous Theorem \ref{T:Wsubgroup}.
\end{proof}

Also, we have other subgroups that are realized by automorphisms on rational surfaces. The construction is essentially the same as the subgroup $G \in W_{15}$ in this section and the rational surface $X$ in the section \ref{S:autgroup}. We will list them below. The following automorphism groups are generated by quadratic surface automorphisms fixing concurrent lines. 

\subsection{Other subgroups} Let $X_{3n}$ be a rational surface obtained by blowing a set of finite points on a cubic $C$ given by three concurrent lines  \[ B_n = \{ p_{i,j} := (1+ t_j,i) \in C : i = 1,2,3, j=1, \dots, n\} \] 
\begin{enumerate}
\item If $n\ge 5$ is odd and $n\equiv 0$ $( \text{mod }3)$, then $\text{Aut}(X_{3n}) \cong (\mathbb{Z}/2\mathbb{Z})^2 \rtimes \mathbb{Z}$.
\item If $n\ge 4$ is even and $n\not\equiv 0$ $( \text{mod }3)$, then $\text{Aut}(X_{3n}) \cong (\mathbb{Z}/3\mathbb{Z}) \rtimes \mathbb{Z}$.
\item If $n\ge 4$ is even and $n\equiv 0$ $( \text{mod }3)$, then $\text{Aut}(X_{3n}) \cong \mathbb{Z}$.
\end{enumerate} 



\begin{thebibliography}{99}





\bibitem{Bedford-Diller-K}
Eric Bedford, Jeffery Diller, and Kyounghee Kim.
\newblock Pseudoautomorphisms with invariant curves.
\newblock {\em Proceedings of the Abel symposium 2013}, pages 1--27, 2015.

\bibitem{Bedford-Kim:2004}
Eric Bedford and Kyounghee Kim.
\newblock On the degree growth of birational mappings in higher dimension.
\newblock {\em J. Geom. Anal.}, 14(4):567--596, 2004.

\bibitem{Bedford-Kim:2006}
Eric Bedford and Kyounghee Kim.
\newblock Periodicities in linear fractional recurrences: degree growth of
  birational surface maps.
\newblock {\em Michigan Math. J.}, 54(3):647--670, 2006.

\bibitem{Bedford-Kim:2009}
Eric Bedford and Kyounghee Kim.
\newblock Dynamics of rational surface automorphisms: linear fractional
  recurrences.
\newblock {\em J. Geom. Anal.}, 19(3):553--583, 2009.


\bibitem{Blanc:2008}
J\'{e}r\'{e}my Blanc.
\newblock On the inertia group of elliptic curves in the {C}remona group of the
  plane.
\newblock {\em Michigan Math. J.}, 56(2):315--330, 2008.




\bibitem{blancdynamical}
J{\'e}r{\'e}my Blanc and Serge Cantat.
\newblock Dynamical degrees of birational transformations of projective
  surfaces.
\newblock {\em J. Amer. Math. Soc.}, 29(2):415--471, 2016.

\bibitem{Cantat:2018}
Serge Cantat.
\newblock Automorphisms and dynamics: a list of open problems.
\newblock In {\em Proceedings of the {I}nternational {C}ongress of
  {M}athematicians---{R}io de {J}aneiro 2018. {V}ol. {II}. {I}nvited lectures},
  pages 619--634. World Sci. Publ., Hackensack, NJ, 2018.


\bibitem{Cantat-Dolgachev}
Serge Cantat and Igor Dolgachev.
\newblock Rational surfaces with a large group of automorphisms.
\newblock {\em J. Amer. Math. Soc.}, 25(3):863--905, 2012.




\bibitem{Cerveau:2008}
Dominique Cerveau and Julie D\'{e}serti.
\newblock {\em Transformations birationnelles de petit degr\'{e}}, volume~19 of
  {\em Cours Sp\'{e}cialis\'{e}s [Specialized Courses]}.
\newblock Soci\'{e}t\'{e} Math\'{e}matique de France, Paris, 2013.


\bibitem{Diller:2011}
Jeffrey Diller.
\newblock Cremona transformations, surface automorphisms, and plain cubics.
\newblock {\em Michigan Math. J. The Michigan Mathematical Journal},
  60(2):409--440, 2011.



\bibitem{Diller-Favre:2001}
Jeffrey Diller and Charles Favre.
\newblock Dynamics of bimeromorphic maps of surfaces.
\newblock {\em Amer. J. Math.}, 123(6):1135--1169, 2001.

\bibitem{DJS}
Jeffrey Diller, Daniel Jackson and Andrew Sommese.
\newblock Invariant Curves for Birational Surface Maps
\newblock {\em Transactions of the American Mathematical Society}, 359(6):2973--2991, 2007.



\bibitem{dolgachev2008reflection}
Igor Dolgachev.
\newblock Reflection groups in algebraic geometry.
\newblock {\em Bulletin of the American Mathematical Society}, 45(1):1--60,
  2008.
  
  \bibitem{Dolgachev-Ortland}
I.~Dolgachev and David. Ortland.
\newblock Point sets in projective spaces and theta functions.
\newblock page Soci{\'e}t{\'e}math{\'e}matique de France; 210 p., 1989.


  
\bibitem{Harbourne:1987}
Brian Harbourne.
\newblock Rational surfaces with infinite automorphism group and no
  antipluricanonical curve.
\newblock {\em Proc. Amer. Math. Soc.}, 99(3):409--414, 1987.

  
  \bibitem{McMullen:2002}
Curtis~T. McMullen.
\newblock Coxeter groups, {S}alem numbers and the {H}ilbert metric.
\newblock {\em Publ. Math. Inst. Hautes {\'E}tudes Sci.}, (95):151--183, 2002.

\bibitem{McMullen:2007}
Curtis~T McMullen.
\newblock Dynamics on blowups of the projective plane.
\newblock {\em Publications Mathematiques de l'Institut des Hautes Etudes
  Scientifiques}, (105):49--90, 2007.



\bibitem{Nagata}
Masayoshi Nagata.
\newblock On rational surfaces. {I}. {I}rreducible curves of arithmetic genus
  {$0$}\ or {$1$}.
\newblock {\em Mem. Coll. Sci. Univ. Kyoto Ser. A Math}, 32:351--370, 1960.

\bibitem{Nagata2}
Masayoshi Nagata.
\newblock On rational surfaces. {II}.
\newblock {\em Mem. Coll. Sci. Univ. Kyoto Ser. A Math.}, 33:271--293,
  1960/1961.




\bibitem{Salem:1945}
R.~Salem.
\newblock Power series with integral coefficients.
\newblock {\em Duke Math. J.}, 12:153--172, 1945.

\bibitem{Smyth:2015}
Chris Smyth.
\newblock Seventy years of {S}alem numbers.
\newblock {\em Bull. Lond. Math. Soc.}, 47(3):379--395, 2015.


\bibitem{Uehara:2010}
Takato Uehara.
\newblock Rational surface automorphisms with positive entropy.
\newblock {\em Ann. Inst. Fourier (Grenoble)}, 66(1):377--432, 2016.





\end{thebibliography}

 \end{document}